\newtheorem{lemma}{Lemma}[section]
\newtheorem{assumption}{Assumption}
\newtheorem{theorem}[lemma]{Theorem}
\newtheorem{corollary}[lemma]{Corollary}
\newtheorem{proposition}[lemma]{Proposition}
\theoremstyle{definition}
\newtheorem{example}[lemma]{Example}
\newtheorem{remark}[lemma]{Remark}
\newcommand{\Ee}{\mathbb{E}}
\newcommand{\Mm}{\mathbb{M}}
\newcommand{\Xx}{\mathbb{X}}
\newcommand{\Yy}{\mathbb{Y}}
\newcommand{\Zz}{\mathcal{Z}}
\newcommand{\XY}{\Xx \times \Yy}
\newcommand{\Bc}{\mathcal{B}}
\newcommand{\Ec}{\mathcal{E}}
\newcommand{\Pc}{\mathcal{P}}
\newcommand{\expec}{\operatorname{\mathsf{E}}}
\newcommand{\pr}{\operatorname{\mathsf{P}}}
\newcommand{\Pemp}{\mathbb{P}_n}
\newcommand{\wto}{\rightsquigarrow}
\newcommand{\1}{\mathds{1}}
\newcommand{\bth}{\bar{\theta}}
\newcommand{\clB}{\overline{B}}
\newcommand{\ES}{\operatorname{ES}_\beta}
\newcommand{\LogS}{\operatorname{LogS}}
\newcommand{\reals}{\mathbb{R}}
\newcommand{\R}{\reals}
\newcommand{\Rd}{\reals^d}
\newcommand{\nat}{\mathbb{N}}
\newcommand{\diff}{\mathrm{d}}
\newcommand{\point}{\,\cdot\,}
\newcommand{\dlt}{\delta}
\newcommand{\eps}{\varepsilon}
\newcommand{\hetan}{\hat{\eta}_n}
\newcommand{\limsupn}{\limsup_{n\to\infty}}
\newcommand{\oh}{\operatorname{o}}
\newcommand{\closure}{\operatorname{cl}}
\renewcommand{\subset}{\subseteq}
\newcommand{\abr}[1]{\left|#1\right|}
\newcommand{\cbr}[1]{\left\{#1\right\}}
\newcommand{\nbr}[1]{\left\|#1\right\|}
\newcommand{\rbr}[1]{\left(#1\right)}
\newcommand{\sbr}[1]{\left[#1\right]}
\numberwithin{equation}{section}
\setlist{itemsep=0pt, topsep=5pt, partopsep = 5pt}
\title{
Consistency of M-estimators for non-identically distributed data: the case of fixed-design distributional regression
}
\author{
Axel B\"ucher\thanks{Ruhr-Universität Bochum, Fakultät für Mathematik. Email: \href{mailto:axel.buecher@rub.de}{axel.buecher@rub.de}} \orcidlink{0000-0002-1947-1617}
\and
Johan Segers\thanks{KU Leuven, Department of Mathematics, and UCLouvain, LIDAM/ISBA. Email: \href{mailto:jjjsegers@kuleuven.be}{jjjsegers@kuleuven.be}}
\orcidlink{0000-0002-0444-689X}
\and
Torben Staud\thanks{Ruhr-Universität Bochum, Fakultät für Mathematik. Email: \href{mailto:torben.staud@rub.de}{torben.staud@rub.de}}
\orcidlink{0009-0004-4526-8585}
}
\date{\today}
\begin{document}
\maketitle

\begin{abstract}
This paper explores strong and weak consistency of M-estimators for non-identically distributed data, extending prior work. 
Emphasis is given to scenarios where data is viewed as a triangular array, which encompasses distributional regression models with non-random covariates. 
Primitive conditions are established for specific applications, such as estimation based on minimizing empirical proper scoring rules or conditional maximum likelihood.
A key motivation is addressing challenges in extreme value statistics, where parameter-dependent supports can cause criterion functions to attain the value $-\infty$, hindering the application of existing theorems.

\medskip
\noindent
Keywords: Block-Maxima Method, Conditional Maximum Likelihood, Distributional Regression, Minimum Scoring Rule Estimation, Non-Random Covariates.
\end{abstract}

\section{Introduction} 
\label{sec:introduction}

M-estimators are a broad class of estimators that arise from maximizing an objective function that corresponds to a sample average. More specifically, given observations $z_1, \dots, z_n$, they arise from (approximately) maximizing a function of the form
\begin{align}
\label{eq:criterion-introduction}
\eta \mapsto M_n(\eta) = \frac1n \sum_{i=1}^n m_\eta(z_i)
\end{align}
over a given parameter set $H$, where $m_\eta$ is a known function taking values in $[-\infty, \infty)$. The framework is broad enough to cover as special cases maximum likelihood estimators from classical parametric statistics or empirical risk minimizers from supervised learning problems.
Asymptotic theory, including weak or strong consistency as well as asymptotic normality, has been studied extensively, in particular for the case of independent and identically distributed observations. For an overview, we refer to Section 5 in \cite{van1998asymptotic} and the references therein.

The present paper is concerned with strong (and weak) consistency of M-estimators in situations where the data is not identically distributed, thereby providing extensions of Sections 5.1 and 5.2 in \cite{van1998asymptotic}. Such situations commonly occur in (distributional) regression models in the fixed-design setting \citep{KneibSilbersdorffSaefken2023, Klein2024}, that is, with non-random covariates, a classical example being the ordinary least squares estimator in linear models. Respective asymptotic theory is usually based on viewing the data as a triangular array, and we adopt this perspective as well; see, for instance, \cite{Liese1994}, \cite{berlinet2000necessary}, and \cite{SalibianBarrera2006} for models where the parameter only enters through a location term. A general framework for deriving strong consistency is offered by \citet{lachout2005strong}, but their assumptions are high-level. It is one the main purposes of this paper to provide more primitive conditions and model assumptions. 

A specific need has motivated this paper: common parametric distributional regression models in extreme value statistics have parameter-dependent supports; see \cite{Phi20} for applications in extreme-event attribution and \cref{subsec:conditional-mle} below for mathematical details. Consequently, the criterion function of any likelihood-based estimator may attain the value $-\infty$, hindering, for example, the application of Theorem~2.2 in \citet{lachout2005strong}. 
Despite acknowledgment of this issue (see, e.g., the discussion of Assumption C.1 in \citealp{Padoan10}), a formal consistency proof for conditional maximum likelihood estimators in extreme value statistics remains absent to the best of our knowledge.

General consistency results for M-estimators in triangular arrays are presented in \cref{sec:consistency-general} and adapted to distributional regression models with non-random covariates in \cref{sec:consistency-covariates}. Specific applications are detailed in \cref{sec:applications}, including estimators based on optimizing empirical proper scoring rules \citep{gneiting2007strictly, dawid2016minimum} and (conditional) maximum likelihood estimators. Additionally, we examine conditional maximum likelihood estimators for heavy-tailed regression models for block maxima. The corresponding consistency result, outlined in \cref{subsec:block-maxima}, can be viewed as an extension of the main finding in \cite{Dom15} to a conditional framework, focusing on the heavy-tailed case, as in \cite{BucSeg18}. Furthermore, \cref{sec:argmax} provides adaptations of Theorem 2.2 from \citet{lachout2005strong} suitable for settings where the criterion function may attain the value $-\infty$. These adaptations could be collectively termed `argmax theorems without uniform convergence', with the main proof idea going back to \cite{wald1949note}. Finally, all proofs are provided in \cref{sec:proofs}.

Throughout, the arrow $\wto$ denotes weak convergence of probability measures on a metric space. For positive integer $n$, we write $[n]=\{1, \dots, n\}$.

\section{Consistency of M-estimators in triangular arrays}
\label{sec:consistency-general}

As motivated in the introduction,  we consider a data-generating process corresponding to a sufficiently regular triangular array of random variables.

\begin{assumption}[Data-generating process]
    \label{ass:dgpZ}
    The metric space $(\Zz, d_{\Zz})$ is separable, and $(Z_{n,i}:n\in\nat,i\in[n])$ is a rowwise independent triangular array of random variables in $\Zz$, all defined on the same probability space. The distributions $P_{n,i}$ of $Z_{n,i}$ satisfy $n^{-1}\sum_{i=1}^n P_{n,i} \wto P_Z$ as $n\to\infty$ for some distribution $P_Z$ on $\Zz$.   
\end{assumption}

The condition that the  random variables $Z_{n,i}$ are all defined on the same probability space is only needed for strong consistency (\cref{thm:strongZ}). For weak consistency (\cref{thm:weakZ}), it is sufficient that within each row $n$, the variables $(Z_{n,i}: i \in [n])$ are defined on the same probability space.
For simplicity, we will disregard this distinction henceforth.

The subsequent condition pertains to the known criterion function $m(\eta, z) = m_\eta(z)$ informally introduced in \eqref{eq:criterion-introduction}.

\begin{assumption}[Criterion function]
    \label{ass:critZ}
    The metric space $(H, d_H)$ is compact and the function $m : H \times \Zz \to [-\infty,\infty)$ is upper semicontinuous. For all $\eta \in H$, the function $m_\eta$ is $P_Z$-quasi-integrable. 
\end{assumption}

While compactness of $H$ is vital for our proofs, the results remain applicable even when $H$ is not compact, provided suitable adjustments are made. Specifically, one can either additionally show that the estimator eventually maps into a compact set, or replace $H$ with an appropriate compactification. The latter may present additional challenges, potentially resolvable through an observation-grouping technique as outlined in Exercise 5.25 of \cite{van1998asymptotic}.

For $\eta \in H$ fixed, we will write $m_\eta : \Zz \to [-\infty,\infty)$ for the function $z \mapsto m(\eta,z)$. The random and asymptotic criterion functions $M_n,M : H \to [-\infty,\infty)$ are defined as
\begin{align}
    \label{eq:MnZ}
    M_n(\eta) &= \frac{1}{n} \sum_{i=1}^n m_\eta(Z_{n,i}), \\
    \label{eq:MZ}
    M(\eta) &= \int_{\Zz} m_\eta(z) \, P_Z(\diff z) = P_{Z} m_\eta.
\end{align}
Informally, $M_n(\eta)$ can be considered as an estimator for $M(\eta)$, such that any (approximate) maximizer of $M_n$ can be regarded as an (approximate) maximizer of $M$. The following assumption concerns the latter maximization problem.

\begin{assumption}[True parameter]
    \label{ass:trueZ}
    The function $M$ has a unique point of finite maximum $\eta_0 \in H$:
    \[
        \forall \eta \in H \setminus \{\eta_0\}:
        \qquad
        -\infty \le M(\eta) < M(\eta_0) < \infty.
    \]
    The function $m_{\eta_0}$ is continuous $P_Z$-almost everywhere.
\end{assumption}

The aforementioned proximity of $M_n(\eta)$ to $M(\eta)$ necessitates a suitable (weak or strong) law of large numbers. Notably, for triangular arrays, strong consistency demands non-standard results which require more than mere (uniform) integrability \citep[p.~215]{chandra1992cesaro}. A sufficient condition is $L^2$-stochastic dominance, as detailed in \cite{HuMorTay89}:
a family of random variables $(X_\alpha)_{\alpha \in A}$ on the real line is said to be \emph{$L^p$-stochastically dominated}, with $1 \le p < \infty$, if there exists a nonnegative random variable $W$ with $\expec[W^p] \le \infty$ such that 
\begin{equation}
    \label{eq:def:stochdom}
    \forall t \in [0, \infty): \qquad 
    \sup_{\alpha \in A} \pr(|X_\alpha|>t) 
    \le \pr(W > t). 
\end{equation}
The family is called \emph{uniformly integrable} if
\begin{equation}
    \label{eq:def:uniformly-integrable}
    \lim_{c\to\infty} \sup_{\alpha \in A} \expec[ |X_\alpha|  \1_{\{|X_\alpha|>c\}}] = 0.
\end{equation}
By Markov's inequality, a sufficient condition for $L^p$-stochastic dominance is the existence of $\delta > 0$ such that 
$
\sup_{\alpha \in A} \expec[|X_\alpha|^{p+\delta}]<\infty,
$
and a sufficient condition for uniform integrability is $L^1$-stochastic dominance.
For $B \subset H$, put $m_B(z) = \sup_{\eta \in B} m_\eta(z)$. Let $\clB(\eta,\delta)$ be the closed ball in $H$ with center $\eta \in H$ and radius $\delta>0$. Let $(x)^+ = \max(x,0)$ denote the positive part of $x \in \reals$.

\begin{assumption}[$L^2$-stochastic dominance]
    \label{ass:stochdomZ}
    \begin{itemize}
    \item[(i)] There exists $n_0 \in \nat$ such that the family $\cbr{m_{\eta_0}(Z_{n,i}) : n \ge n_0, i \in [n]}$ is $L^2$-stochastically dominated. 
    \item[(ii)] For every $\eta \in H \setminus \cbr{\eta_0}$ there exists $\delta >0$ and $n_0 \in \nat$ such that $P_Z m_{\clB(\eta,\delta)} < \infty$ and the family $\{(m_{\clB(\eta,\delta)}(Z_{n,i}))^+: n \ge n_0, i \in [n]\}$ is $L^2$-stochastically dominated.
    \end{itemize}
\end{assumption}

Often, the function $m$ is upper bounded on $H \times \Zz$, that is,
\begin{align} \label{eq:bounded-m}
\sup_{\eta \in H, z \in \Zz} m_\eta(z) < \infty.
\end{align}
Under this condition, the $L^2$-stochastic dominance condition in (ii) is immediate, while the condition in (i) becomes a condition on the lower tail of $m_{\eta_0}(Z_{n,i})$ only. 

\begin{theorem}[Strong consistency]
    \label{thm:strongZ}
    If Assumptions~\ref{ass:dgpZ}, \ref{ass:critZ}, \ref{ass:trueZ} and \ref{ass:stochdomZ} hold, then every estimator sequence $\hetan$ defined on the same probability space as the array $(Z_{n,i})_{n,i}$ that satisfies $M_n(\hetan) \ge M_n(\eta_0) - \oh(1)$ almost surely as $n\to\infty$ is strongly consistent for $\eta_0$.
\end{theorem}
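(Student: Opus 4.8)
The plan is to run the classical Wald covering argument for consistency of M-estimators, adapted to the triangular-array setup and to criterion functions that may take the value $-\infty$. Two ingredients are needed: (I) $M_n(\eta_0)\to M(\eta_0)$ almost surely; and (II) for every $\eta\in H\setminus\{\eta_0\}$ there is a radius $\delta_\eta>0$ such that $\limsupn\sup_{\eta'\in\clB(\eta,\delta_\eta)}M_n(\eta')<M(\eta_0)$ almost surely. Once these are in place, a compactness argument covers $H\setminus B(\eta_0,\eps)$ by finitely many of the balls from (II), yielding an almost sure bound $\limsupn\sup_{\eta'\notin B(\eta_0,\eps)}M_n(\eta')<M(\eta_0)$; combined with (I) and the near-optimality $M_n(\hetan)\ge M_n(\eta_0)-\oh(1)$ this forces $\hetan\in B(\eta_0,\eps)$ for all large $n$, and letting $\eps\downarrow 0$ along a sequence gives $\hetan\to\eta_0$ almost surely.

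For ingredient (I), I would invoke the strong law of large numbers for rowwise independent, $L^2$-stochastically dominated triangular arrays (\citealp{HuMorTay89}): \cref{ass:stochdomZ}(i) supplies the domination, which moreover forces $m_{\eta_0}(Z_{n,i})$ to be finite almost surely, so the law applies to genuine real random variables and gives $\frac1n\sum_{i=1}^n\bigl(m_{\eta_0}(Z_{n,i})-\expec[m_{\eta_0}(Z_{n,i})]\bigr)\to 0$ almost surely. It then remains to identify the limit of the deterministic averages $\frac1n\sum_{i=1}^n\expec[m_{\eta_0}(Z_{n,i})]=\int_\Zz m_{\eta_0}\,\diff\bigl(\frac1n\sum_i P_{n,i}\bigr)$ as $M(\eta_0)$: this follows from $\frac1n\sum_i P_{n,i}\wto P_Z$ (\cref{ass:dgpZ}), the $P_Z$-almost everywhere continuity of $m_{\eta_0}$ (\cref{ass:trueZ}) via the extended continuous mapping theorem, and the uniform integrability of $\{m_{\eta_0}(Z_{n,i})\}$ implied by \cref{ass:stochdomZ}(i), which upgrades weak convergence of the image laws to convergence of their means.

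Ingredient (II) is the crux. Fix $\eta\neq\eta_0$, take $\delta_*>0$ from \cref{ass:stochdomZ}(ii), and for $0<\delta\le\delta_*$ set $g_\delta:=m_{\clB(\eta,\delta)}$. Because $m$ is jointly upper semicontinuous and $\clB(\eta,\delta)$ is compact, $g_\delta$ is upper semicontinuous (hence Borel, and the defining supremum is attained), $g_\delta(z)\downarrow m_\eta(z)$ as $\delta\downarrow 0$ for every $z$, and since $P_Z g_{\delta_*}<\infty$ the monotone convergence theorem gives $P_Z g_\delta\downarrow M(\eta)<M(\eta_0)$; fix $\delta=\delta_\eta$ with $P_Z g_{\delta_\eta}<M(\eta_0)$. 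As $M_n(\eta')\le\frac1n\sum_i g_{\delta_\eta}(Z_{n,i})$ for $\eta'\in\clB(\eta,\delta_\eta)$, it suffices to prove $\limsupn\frac1n\sum_i g_{\delta_\eta}(Z_{n,i})\le P_Z g_{\delta_\eta}$ almost surely. Here the possibility $m=-\infty$ forces a double truncation: for $c>0$ put $g^{(c)}:=g_{\delta_\eta}\vee(-c)$, which is bounded below and, by \cref{ass:stochdomZ}(ii), $L^2$-stochastically dominated, so the triangular-array strong law applies to it and reduces the problem to the deterministic averages $\frac1n\sum_i\expec[g^{(c)}(Z_{n,i})]$; then truncating $g^{(c)}$ from above at a level $c'$, applying the portmanteau inequality for bounded upper semicontinuous functions under $\frac1n\sum_i P_{n,i}\wto P_Z$, and controlling the truncation error by the uniform integrability of $\{(g_{\delta_\eta}(Z_{n,i}))^+\}$ and by monotone convergence as $c'\to\infty$ yields $\limsupn\frac1n\sum_i\expec[g^{(c)}(Z_{n,i})]\le P_Z g^{(c)}$. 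Finally, letting $c\to\infty$ along integers and using $g_{\delta_\eta}\le g^{(c)}$ together with $P_Z g^{(c)}\downarrow P_Z g_{\delta_\eta}$ (again monotone convergence, valid since $P_Z g_{\delta_\eta}<\infty$) closes the argument. I expect this truncation step — reconciling the triangular-array strong law, which needs integrable and thus bounded-below summands, with the upper-semicontinuous portmanteau inequality, which needs bounded-above summands, while pushing both truncation levels to infinity — to be the main obstacle.

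To finish, fix $\eps>0$: the set $H\setminus B(\eta_0,\eps)$ is compact and is covered by the open balls $B(\eta,\delta_\eta)$, $\eta\in H\setminus B(\eta_0,\eps)$, so there is a finite subcover with centres $\eta_1,\dots,\eta_K$. On the almost sure event where ingredient (I) and the $K$ instances of ingredient (II) all hold, $\limsupn\sup_{\eta'\notin B(\eta_0,\eps)}M_n(\eta')\le\max_{j\le K}P_Z g_{\delta_{\eta_j}}=:c_\eps<M(\eta_0)$, while $\liminfn M_n(\hetan)\ge\liminfn\bigl(M_n(\eta_0)-\oh(1)\bigr)=M(\eta_0)$; hence $M_n(\hetan)>\sup_{\eta'\notin B(\eta_0,\eps)}M_n(\eta')$ for all large $n$, which is possible only if $d_H(\hetan,\eta_0)<\eps$. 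Intersecting these almost sure events over $\eps=1/k$, $k\in\nat$, gives $\hetan\to\eta_0$ almost surely. Measurability causes no difficulty, since each $g_{\delta_\eta}$ is Borel and the supremum of $M_n$ over $H\setminus B(\eta_0,\eps)$ need only be bounded above by the maximum of the finitely many random variables $\frac1n\sum_i g_{\delta_{\eta_j}}(Z_{n,i})$.
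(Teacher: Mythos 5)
Your proof is correct and follows the same overall strategy as the paper (Wald covering argument, Hu--M\'oricz--Taylor strong law for $L^2$-dominated rowwise-independent triangular arrays, a portmanteau-type bound over closed balls, monotone shrinking of the balls to pick $\delta_\eta$), but the execution of the key limsup bound in Ingredient~(II) and the overall factoring genuinely differ. The paper first proves an abstract argmax theorem (\cref{thm:lachout-modified}, conditions~\ref{L-identifiable}--\ref{L-convergence-eta0}) and verifies its hypotheses via \cref{lem:PniLLNZ}, \cref{lem:portmanteauuscZ} and \cref{lem:usc}; in \cref{lem:portmanteauuscZ} the function $f=m_{\clB(\eta,\rho)}$ is split into $f^+$ and $\min(f,0)$, with the unbounded-below negative part handled by establishing almost-sure weak convergence $\Pemp\wto P_Z$ (Varadarajan for triangular arrays, \cref{cor:VaradarajanZ}, itself built on \cref{lem:vara_sep_metr_wconv}) and applying the portmanteau inequality to this random weak convergence, while the positive part is handled by the strong LLN plus a one-sided truncation of the deterministic averages. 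You instead truncate $g_{\delta_\eta}$ from below at $-c$ first, which produces a bounded-below, $L^2$-dominated, upper semicontinuous function to which the triangular strong LLN applies directly; the remaining deterministic averages $P_n g^{(c)}$ are then controlled by the portmanteau inequality applied to the nonrandom convergence $P_n\wto P_Z$ together with a second (upper) truncation, and both truncation levels are pushed to infinity with monotone convergence. This sidesteps Varadarajan's theorem entirely and yields a more self-contained, single-pass argument, at the cost of not isolating reusable lemmas: the paper's modular structure (abstract argmax theorem plus separate portmanteau and LLN lemmas) is what allows the weak-consistency result, \cref{thm:weakZ}, and the distributional-regression corollaries to be obtained with almost no extra work.
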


Weak consistency of $\hat \eta_n$ can be derived under a weaker integrability condition than $L^2$-stochastic dominance. Again, if $m$ satisfies \cref{eq:bounded-m}, item (ii) in the next condition is immediate and item (i) only concerns the lower tail of $m_{\eta_0}(Z_{n,i})$.

\begin{assumption}[Uniform integrability]
    \label{ass:uiZ}
    \begin{itemize}
    \item[(i)] There exists $n_0 \in \nat$ such that the family $\cbr{m_{\eta_0}(Z_{n,i}) : n \ge n_0, i \in [n]}$ is uniformly integrable. 
    \item[(ii)] For every $\eta \in H \setminus \cbr{\eta_0}$ there exists $\delta >0$ and $n_0 \in \nat$ such that $P_Z m_{\clB(\eta,\delta)} < \infty$ and such that the family $\{(m_{\clB(\eta,\delta)}(Z_{n,i}))^+ : n \ge n_0, i \in [n]\}$ is uniformly integrable.
    \end{itemize}
\end{assumption}

\begin{theorem}[Weak consistency]
    \label{thm:weakZ}
    If Assumptions~\ref{ass:dgpZ}, \ref{ass:critZ}, \ref{ass:trueZ} and \ref{ass:uiZ} hold, then every estimator sequence $\hetan$ such that $\hetan$ is defined on the same probability space as the tuple $(Z_{n,i})_{i\in[n]}$ and that satisfies $M_n(\hetan) \ge M_n(\eta_0) - \oh_{\pr}(1)$ as $n\to\infty$ is weakly consistent for $\eta_0$.
\end{theorem}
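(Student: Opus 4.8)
The plan is to run the classical Wald-type \emph{argmax} argument (cf.\ \cref{sec:argmax}), parallel to the proof of \cref{thm:strongZ} but with almost-sure convergence replaced throughout by convergence in probability and with the triangular-array strong law under $L^2$-stochastic dominance replaced by an elementary Chebyshev-type weak law that only needs uniform integrability of suitable truncations. Since $(H,d_H)$ is compact, $\hetan\to\eta_0$ in probability follows once we show that for every $r>0$, writing $K=K_r:=\{\eta\in H:d_H(\eta,\eta_0)\ge r\}$ (a compact set not containing $\eta_0$), one has $\pr(\hetan\in K)\to 0$. The near-maximization hypothesis gives $R_n:=(M_n(\eta_0)-M_n(\hetan))^+=\oh_\pr(1)$, and on $\{\hetan\in K\}$ necessarily $\sup_{\eta\in K}M_n(\eta)\ge M_n(\hetan)\ge M_n(\eta_0)-R_n$. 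So it suffices to prove (A) $M_n(\eta_0)\to M(\eta_0)$ in probability, and (B) that there exists $c_0>0$ with $\pr\big(\sup_{\eta\in K}M_n(\eta)\ge M(\eta_0)-c_0\big)\to 0$; a union bound over the three events $\{\sup_{\eta\in K}M_n(\eta)\ge M(\eta_0)-c_0\}$, $\{M_n(\eta_0)<M(\eta_0)-c_0/3\}$, $\{R_n>c_0/3\}$ then shows $\pr(\hetan\in K)\to 0$.

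\emph{Step (A).} Note first that \cref{ass:uiZ}(i) forces $m_{\eta_0}(Z_{n,i})$ to be a.s.\ finite and integrable for $n\ge n_0$ (an atom at $-\infty$, or a non-integrable lower tail, would destroy uniform integrability). Decompose $M_n(\eta_0)-M(\eta_0)$ as $\big(M_n(\eta_0)-\tfrac1n\sum_{i=1}^n\expec[m_{\eta_0}(Z_{n,i})]\big)+\big(\tfrac1n\sum_{i=1}^n\expec[m_{\eta_0}(Z_{n,i})]-M(\eta_0)\big)$. The first bracket tends to $0$ in probability by a standard truncation: clip $m_{\eta_0}$ at $\pm K$, apply Chebyshev to the clipped part using rowwise independence (variance $\le K^2/n$), and bound the remainder in $L^1$ by $\sup_{n\ge n_0,i}\expec[\,|m_{\eta_0}(Z_{n,i})|\,\1_{\{|m_{\eta_0}(Z_{n,i})|>K\}}\,]$, which vanishes as $K\to\infty$ by \cref{ass:uiZ}(i). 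The second bracket equals $\int_\Zz m_{\eta_0}\diff\bar P_n-\int_\Zz m_{\eta_0}\diff P_Z$ with $\bar P_n:=n^{-1}\sum_{i=1}^n P_{n,i}\wto P_Z$ by \cref{ass:dgpZ}; since $m_{\eta_0}$ is $P_Z$-a.e.\ continuous (\cref{ass:trueZ}) and $\{m_{\eta_0}\}$ is uniformly integrable with respect to $(\bar P_n)_n$ — inherited from \cref{ass:uiZ}(i) because $\int_{\{|m_{\eta_0}|>K\}}|m_{\eta_0}|\diff\bar P_n\le\sup_{n\ge n_0,i}\expec[|m_{\eta_0}(Z_{n,i})|\1_{\{|m_{\eta_0}(Z_{n,i})|>K\}}]$ — the standard extension of the portmanteau/continuous-mapping theorem to uniformly integrable, a.e.-continuous integrands gives convergence to $0$.

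\emph{Step (B).} Fix $\eta\in K$; then $\eta\ne\eta_0$, hence $M(\eta)<M(\eta_0)$ by \cref{ass:trueZ}, and \cref{ass:uiZ}(ii) supplies $\delta_0>0$ with $P_Z m_{\clB(\eta,\delta_0)}<\infty$ (so in particular $(m_{\clB(\eta,\delta_0)})^+$ is $P_Z$-integrable). Upper semicontinuity of $m$ (\cref{ass:critZ}) yields $m_{\clB(\eta,\delta)}(z)\downarrow m_\eta(z)$ as $\delta\downarrow 0$ for every $z$, so monotone convergence (the positive parts being dominated by $(m_{\clB(\eta,\delta_0)})^+$) gives $P_Z m_{\clB(\eta,\delta)}\downarrow M(\eta)<M(\eta_0)$; fix $\delta_\eta\in(0,\delta_0]$ with $P_Z m_{\clB(\eta,\delta_\eta)}<M(\eta_0)$, noting that $\{(m_{\clB(\eta,\delta_\eta)}(Z_{n,i}))^+\}$ is still uniformly integrable, being dominated by the one for $\delta_0$. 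Cover the compact $K$ by finitely many open balls $B(\eta_j,\delta_{\eta_j})$, $j\in[J]$; with $g_j:=m_{\clB(\eta_j,\delta_{\eta_j})}$ one has $\sup_{\eta\in K}M_n(\eta)\le\max_{j\in[J]}\tfrac1n\sum_{i=1}^n g_j(Z_{n,i})$. Each $g_j$ is merely upper semicontinuous with only $g_j^+$ uniformly integrable, so no law of large numbers applies to $g_j$ directly; the key device is the one-sided truncation $g_j\le\big((g_j\wedge c)\vee(-d)\big)+(g_j-c)^+$. Pick $d_j$ so large that $\int_\Zz(g_j\vee(-d_j))\diff P_Z<M(\eta_0)$ (possible since $g_j\vee(-d)\downarrow g_j$ and $P_Z g_j<M(\eta_0)$), set $c_0:=\tfrac14\big(M(\eta_0)-\max_j\int_\Zz(g_j\vee(-d_j))\diff P_Z\big)>0$, and fix any $c>0$. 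The clipped function $h_j:=(g_j\wedge c)\vee(-d_j)$ is bounded and upper semicontinuous, so $\tfrac1n\sum_i h_j(Z_{n,i})$ concentrates (Chebyshev, variance $\le(c+d_j)^2/n$) around $\int h_j\diff\bar P_n$, whose $\limsup_n$ is at most $\int h_j\diff P_Z\le\int(g_j\vee(-d_j))\diff P_Z\le M(\eta_0)-4c_0$ by the portmanteau inequality for bounded u.s.c.\ functions; meanwhile $\expec[\tfrac1n\sum_i(g_j(Z_{n,i})-c)^+]\le\sup_{n\ge n_0,i}\expec[g_j^+(Z_{n,i})\1_{\{g_j^+(Z_{n,i})>c\}}]\to 0$ as $c\to\infty$, so by Markov the $(g_j-c)^+$-term is asymptotically negligible. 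Letting $c\to\infty$ after $n\to\infty$ yields $\pr\big(\tfrac1n\sum_i g_j(Z_{n,i})\ge M(\eta_0)-c_0\big)\to 0$ for every $j$, and a union bound over $j\in[J]$ gives (B).

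The main obstacle — and the reason Theorem~2.2 of \citet{lachout2005strong} cannot be invoked — is that $m$, and hence every local supremum $m_{\clB(\eta,\delta)}$, may take the value $-\infty$ and is only upper semicontinuous, so neither a uniform law of large numbers nor a direct law for $M_n$ on balls is available. The one-sided clipping, together with the portmanteau inequality for bounded u.s.c.\ integrands (for the upper bound on the clipped sum) and uniform integrability of the positive parts (for the residual $(g_j-c)^+$-term), is precisely what replaces those tools. A secondary delicate point is Step (A): obtaining $\int_\Zz m_{\eta_0}\diff\bar P_n\to M(\eta_0)$ under mere uniform integrability rather than boundedness, which is where $P_Z$-a.e.\ continuity of $m_{\eta_0}$ (\cref{ass:trueZ}) and the passage from family-wise to averaged-measure uniform integrability enter. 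Routine measurability issues (upper semicontinuity, hence Borel measurability, of $z\mapsto m_{\clB(\eta,\delta)}(z)$, using joint upper semicontinuity of $m$ and compactness of $\clB(\eta,\delta)$) are handled as usual.
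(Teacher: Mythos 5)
Your proof is correct, but the route is genuinely different from the paper's. The paper first establishes an abstract argmax theorem (\cref{thm:lachout-modified:weak}, Assumptions \ref{L-identifiable}--\ref{L-semicontinuous}, \ref{L-convergence-suprema-P}--\ref{L-convergence-eta0-P}) and then verifies its hypotheses for the triangular-array setting; the workhorse is \cref{lem:portmanteauuscZ-P}, which splits $f = f^+ + \min(f,0)$, handles the nonpositive part \emph{almost surely} via Varadarajan's theorem (\cref{cor:VaradarajanZ}) plus the portmanteau inequality for bounded-above u.s.c. functions, and handles $f^+$ via the weak law of large numbers for uniformly integrable triangular arrays (\cref{prop:WLLNtri}, via \citet{gut1992weak}). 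For Assumption~\ref{L-convergence-eta0-P} it again invokes \cref{prop:WLLNtri} together with Theorem~2.20 of \citet{van1998asymptotic}. You instead inline the Wald-type covering argument and, for each cover element $g_j = m_{\clB(\eta_j,\delta_{\eta_j})}$, replace both Varadarajan and the triangular-array weak LLN by an elementary two-sided clipping $g_j \le ((g_j\wedge c)\vee(-d_j)) + (g_j - c)^+$: the lower level $d_j$ is fixed once and for all (chosen so that $P_Z(g_j\vee(-d_j)) < M(\eta_0)$, which is possible by monotone convergence since $P_Z g_j < M(\eta_0)$), the bounded clipped sum concentrates around its mean by Chebyshev with the deterministic portmanteau inequality for $\bar P_n \wto P_Z$, and the residual $(g_j-c)^+$ is controlled in $L^1$, uniformly in $n$, by the uniform integrability of $g_j^+$, so that Markov kills it after sending $c\to\infty$. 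Step~(A) is the same idea specialised to $m_{\eta_0}$ using its two-sided uniform integrability and $P_Z$-a.e.\ continuity. The payoff of your version is that it is essentially self-contained — Chebyshev and Markov replace Varadarajan and Gut's theorem — and the fixed lower truncation neatly sidesteps the fact that $m_{\clB(\eta,\delta)}$ may take the value $-\infty$; what you give up is the modularity that lets the paper reuse the identical abstract argmax theorem (in its a.s.\ form) to prove \cref{thm:strongZ}. One minor point of care, which your sketch handles correctly but states tersely: for the residual term, the order of quantifiers must be `for each tolerance $\eps_0$, first choose $c$ (so Markov gives probability $<\eps_0/2$ uniformly in $n \ge n_0$), then let $n\to\infty$' — since for fixed $c$ the term $\frac1n\sum_i(g_j(Z_{n,i})-c)^+$ is \emph{not} $\oh_{\pr}(1)$, only small in probability once $c$ is large.
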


\begin{remark}
By the mere definition of weak consistency, the conclusion in \cref{thm:weakZ} does not depend on the dependence between rows in the triangular array. This is different in \cref{thm:strongZ}, and it is perhaps surprising that the conclusion in \cref{thm:strongZ} is true \textit{for any} form of stochastic dependence/independence between rows. The price to be paid for this generality is a stronger integrability condition. 
\end{remark}

\section{Consistency of M-estimators in conditional models with non-random covariates}
\label{sec:consistency-covariates}

Throughout this section, we consider M-estimators in conditional models that are based on $n$ observations from a tuple $(x,Y)$, with the distribution of the response $Y$ depending on a non-random covariate $x$.  In many related situations such as nonparametric regression or classical linear regression, deriving asymptotic results is most naturally framed by viewing the data as a triangular array. We adopt this perspective and impose conditions that enable us to apply the general results from the previous section.

Specifically, we assume that $(x,Y)$ takes values in $\XY$, where $(\Xx, d_\Xx)$ and $(\Yy, d_\Yy)$ are separable metric spaces equipped with the respective Borel $\sigma$-fields $\Bc(\Xx)$ and $\Bc(\Yy)$, respectively. The distribution of the response variable is assumed to depend on $x$, which will formally be described by the notion of a Markov kernel, that is, by a map $Q : \Xx \times \Bc(\Yy) \to [0, 1]$ such that $Q(x, \point)$ is a probability measure on $\Yy$ for each $x \in \Xx$ and such that $x \mapsto Q(x, B)$ is measurable for each $B \in \Bc(\Yy)$.
Despite the fact that $x$ is deterministic, it helps to think of $Q(x, \point)$ as the conditional distribution of the response variable taking values in $\Yy$ given the covariate $x$; an alternative notation to keep in mind would be $P_{Y \mid X=x}(\point)$ instead of $Q(x, \point)$. 

In a nutshell, we will apply the framework of \cref{sec:consistency-general} to $\Zz = \Xx \times \Yy$ and $Z_{n,i} = (x_{n,i}, Y_{n,i})$ in \cref{ass:dgpZ-conditional} below. The same approach could be followed to handle random covariates, putting $Z_{n,i} = (X_{n,i}, Y_{n,i})$; we do not develop this further in this paper.

\begin{assumption}[Data-generating process]
    \label{ass:dgpZ-conditional}
    For sample size $n \in \nat$, we observe a finite sequence $(x_{n,i}, Y_{n,i})_{i\in[n]}$, where $x_{n,1}, \dots, x_{n,n}$ are deterministic elements in $\Xx$ and where $Y_{n,1},\dots, Y_{n,n}$ are independent $\Yy$-valued random variables such that $Y_{n,i} \sim Q_n(x_{n,i}, \diff y)$ for all $i \in [n]$ and for some Markov kernel $Q_n: \Xx \times \Bc(\Yy) \to [0, 1]$. Moreover:
    \begin{enumerate}[label = (\alph*)]
    \item The observations $x_{n,1}, \dots, x_{n,n}$ are regular in the sense that there exists a non-degenerate probability distribution $P_X$ on $\Xx$ such that $n^{-1}\sum_{i=1}^n \delta_{x_{n,i}} \wto P_X$ as $n \to \infty$.
    \item The distribution of $Y_{n,1},\dots, Y_{n,n}$ is regular in the sense that there exists a Markov kernel $Q: \Xx \times \Bc(\Yy) \to [0, 1]$ such that $Q_n(\point, \diff y )$ converges continuously to $Q(\point, \diff y)$ in the weak topology, that is, for any sequence $x_n \to x$ in $\Xx$, the measures $Q_n(x_n, \diff y )$ converge weakly to $Q(x, \diff y)$. Moreover, for the limiting kernel $Q$, the function $x \mapsto Q(x, \diff y)$ is weakly continuous, i.e., if $x_n \to x$ in $\Xx$, then $Q(x_n, \diff y) \wto Q(x, \diff y)$ as $n \to \infty$.
    \end{enumerate}
\end{assumption}

A typical example for part~(a) of \cref{ass:dgpZ-conditional}, often studied in non-parametric regression, is the simple uniform design $x_{i,n}=i/n$, which could in practice for instance correspond to rescaled time. It has also been imposed in \cite{Liese1994, berlinet2000necessary} for studying M-estimators in general mean regression models. Moreover, it is comparable to standard assumptions for consistency statements on the ordinary least squares estimator in linear mean regression models with non-random design matrix $\bm X_n=(x_{n,1}^\top, \dots, x_{n,n}^\top)^\top$, which is typically assumed to satisfy $\lim_{n \to \infty} \frac1n \sum_{i=1}^n x_{n,i}x_{n,i}^\top = \bm S$ for some matrix $\bm S \in \R^{k \times k}$. Under \cref{ass:dgpZ-conditional}(a), we would have $\bm S= \int x x^\top P_X(\diff x)$, provided $P_X$ has finite cross-moments of some order $p>2$. 

An important special case for part~(b) of \cref{ass:dgpZ-conditional} arises when $Q_n=Q$ does not depend on $n$. In this case, the condition simplifies to requiring only weak continuity of the mapping $x \mapsto Q(x, \diff y)$; this covers most parametric distributional regression models like $Q(x, \point) = \mathcal N(f_\theta(x), \sigma^2)$ for some smooth, parametric regression function $f_\theta$ and some $\sigma^2>0$. More specific examples will be worked out in Section~\ref{sec:applications}.

We start by showing that \cref{ass:dgpZ-conditional} implies \cref{ass:dgpZ}. Consistency of M-estimators is then an immediate corollary.

\begin{lemma}\label{lem:from-conditional-to-triangular}
    Suppose that \cref{ass:dgpZ-conditional} is met. Then \cref{ass:dgpZ} is met with $\Zz=\XY$, $Z_{n,i}=(x_{n,i}, Y_{n,i})$,
    $
        P_{n,i}(\diff (x,y) ) 
        = 
        Q_n(x, \diff y) \, \delta_{x_{n,i}}(\diff x)
    $
    and $P_Z=P_{X,Y}$, where
    \begin{equation}
        \label{eq_ext:Pxy}
        P_{X, Y}(\diff(x,y)) 
        = 
        Q(x, \diff y) P_X(\diff x)
    \end{equation}
    and where $\delta_x$ is the Dirac-measure at $x$.
\end{lemma}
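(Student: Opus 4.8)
The plan is to check the three requirements of \cref{ass:dgpZ} in turn: separability of $\Zz=\XY$; rowwise independence of the array with row distributions $P_{n,i}$ as stated; and the weak Cesàro limit $\bar P_n := n^{-1}\sum_{i=1}^n P_{n,i} \wto P_{X,Y}$. The first two are bookkeeping. The product $\XY$ of two separable metric spaces is separable, and (being second countable) satisfies $\Bc(\XY)=\Bc(\Xx)\otimes\Bc(\Yy)$, so that formula \eqref{eq_ext:Pxy} defines a genuine Borel probability measure via the measurability of $x\mapsto Q(x,B)$; rowwise independence of $Z_{n,i}=(x_{n,i},Y_{n,i})$ is inherited from that of $Y_{n,1},\dots,Y_{n,n}$ because the $x_{n,i}$ are deterministic; and the law of $(x_{n,i},Y_{n,i})$ is the product of the point mass $\delta_{x_{n,i}}$ with $Q_n(x_{n,i},\point)$, i.e.\ $P_{n,i}(\diff(x,y))=Q_n(x,\diff y)\,\delta_{x_{n,i}}(\diff x)$.

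The real content is the weak convergence $\bar P_n\wto P_{X,Y}$. By the portmanteau theorem it suffices to prove $\int f\,\diff\bar P_n\to\int f\,\diff P_{X,Y}$ for every $f\in C_b(\XY)$. Set $g_n(x)=\int_\Yy f(x,y)\,Q_n(x,\diff y)$, $g(x)=\int_\Yy f(x,y)\,Q(x,\diff y)$, and $\mu_n=n^{-1}\sum_{i=1}^n\delta_{x_{n,i}}$. Then
\[
\int f\,\diff\bar P_n=\frac1n\sum_{i=1}^n\int_\Yy f(x_{n,i},y)\,Q_n(x_{n,i},\diff y)=\int_\Xx g_n\,\diff\mu_n,\qquad \int f\,\diff P_{X,Y}=\int_\Xx g\,\diff P_X .
\]
By \cref{ass:dgpZ-conditional}(a) we have $\mu_n\wto P_X$, so the task is reduced to interchanging a weak limit of measures ($\mu_n\to P_X$) with a limit of integrands ($g_n\to g$).

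To handle this I would isolate an elementary auxiliary statement: if $\nu_n\wto\nu$ are Borel probability measures on a separable metric space $S$, $h\colon S\to\R$ is bounded and continuous, and $h_n\colon S\to\R$ are uniformly bounded and converge to $h$ \emph{continuously} (i.e.\ $s_n\to s$ in $S$ implies $h_n(s_n)\to h(s)$), then $\int h_n\,\diff\nu_n\to\int h\,\diff\nu$ --- whenever the left-hand integrals are defined, which holds in every application below because either $h_n$ is itself continuous or $\nu_n$ is finitely supported. This is immediate from Skorokhod's representation theorem: choose $\xi_n\sim\nu_n$, $\xi\sim\nu$ with $\xi_n\to\xi$ almost surely; then $h_n(\xi_n)\to h(\xi)$ almost surely by continuous convergence, and bounded convergence gives $\int h_n\,\diff\nu_n=\expec[h_n(\xi_n)]\to\expec[h(\xi)]=\int h\,\diff\nu$. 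Applying this on $S=\Yy$, first with $\nu_n=Q(x_n,\diff y)\wto Q(x,\diff y)$ (weak continuity of $Q$, \cref{ass:dgpZ-conditional}(b)) and $h_n=f(x_n,\point)\to f(x,\point)$ --- continuous convergence here is just joint continuity of $f$ --- shows that $g$ is continuous (and bounded by $\|f\|_\infty$); applying it again on $S=\Yy$ with $\nu_n=Q_n(x_n,\diff y)\wto Q(x,\diff y)$ (continuous convergence $Q_n\to Q$, \cref{ass:dgpZ-conditional}(b)) and the same integrands shows that $g_n(x_n)\to g(x)$ whenever $x_n\to x$, i.e.\ $g_n\to g$ continuously, with $\sup_n\|g_n\|_\infty\le\|f\|_\infty$. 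A final application on $S=\Xx$ with $\nu_n=\mu_n\wto P_X$ and integrands $g_n\to g$ delivers $\int_\Xx g_n\,\diff\mu_n\to\int_\Xx g\,\diff P_X$, which is exactly the convergence we need.

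The main obstacle is this double passage to the limit, in which both the integrating measure and the integrand vary with $n$: ordinary portmanteau handles only the former. The role of \cref{ass:dgpZ-conditional}(b) --- continuous convergence of $Q_n$ to $Q$ together with weak continuity of the limit kernel --- is precisely to furnish the "continuous convergence" hypothesis the auxiliary statement requires, and routing the argument through Skorokhod's representation keeps it short and sidesteps any direct manipulation of open or closed sets.
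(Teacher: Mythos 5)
Your proof is correct and follows essentially the same route as the paper's: reduce $\bar P_n \wto P_{X,Y}$ to $\int g_n\,\diff\mu_n \to \int g\,\diff P_X$, establish that $g_n \to g$ continuously, and then combine that with $\mu_n \wto P_X$. The only real difference is in the packaging of the last step. The paper first proves $g_n - g \to 0$ continuously (using uniform continuity of $f$, available because their reduction lemma works with Lipschitz test functions), feeds that into the \emph{extended} continuous mapping theorem to get $\mu_n(g_n-g)\to 0$, and then separately shows $g$ is bounded continuous so that $\mu_n g \to P_X g$ by ordinary portmanteau. You instead formulate a single reusable lemma ("$\nu_n\wto\nu$ plus continuous convergence of uniformly bounded integrands implies convergence of integrals"), prove it directly via Skorokhod representation, and apply it three times; this avoids the decomposition into $g_n-g$ and $g$ and lets you work with ordinary rather than uniform continuity of $f$. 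The two devices are interchangeable here — the extended CMT is itself typically proved by Skorokhod — so the proofs are essentially equivalent in substance, with yours being slightly more self-contained and the paper's leaning on a cited black box.
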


\begin{corollary}
    \label{cor:conditional:strong}
    Suppose that \cref{ass:dgpZ-conditional} holds. If, additionally, Assumptions~\ref{ass:critZ}, \ref{ass:trueZ} and \ref{ass:stochdomZ}  are met for $\Zz=\XY$, $Z_{n,i}=(x_{n,i}, Y_{n,i})$ and $P_Z=P_{X,Y}$, then every estimator sequence $\hetan$ defined on the same probability space as the array $(x_{n,i}, Y_{n,i})_{n,i}$ that satisfies $M_n(\hetan) \ge M_n(\eta_0) - \oh(1)$ almost surely as $n\to\infty$ is strongly consistent for $\eta_0$.
\end{corollary}

\begin{corollary}
    \label{cor:conditional:weak}
    Suppose that \cref{ass:dgpZ-conditional} holds. If, additionally, Assumptions~\ref{ass:critZ}, \ref{ass:trueZ} and \ref{ass:uiZ} are met for $\Zz=\XY$, $Z_{n,i}=(x_{n,i}, Y_{n,i})$ and $P_Z=P_{X,Y}$, then every estimator sequence $\hetan$ such that $\hetan$ is defined on the same probability space as $(x_{n,i}, Y_{n,i})_{i\in[n]}$ and that satisfies $M_n(\hetan) \ge M_n(\eta_0) - \oh_{\pr}(1)$ as $n\to\infty$ is weakly consistent for $\eta_0$.
\end{corollary}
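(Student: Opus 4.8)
The plan is to deduce the statement directly from \cref{thm:weakZ} by way of \cref{lem:from-conditional-to-triangular}, so the proof is essentially a matter of checking that the hypotheses line up. First I would invoke \cref{lem:from-conditional-to-triangular}: since \cref{ass:dgpZ-conditional} is assumed, \cref{ass:dgpZ} holds with $\Zz=\XY$, $Z_{n,i}=(x_{n,i}, Y_{n,i})$, $P_{n,i}(\diff(x,y)) = Q_n(x,\diff y)\,\delta_{x_{n,i}}(\diff x)$, and $P_Z=P_{X,Y}$ as in \eqref{eq_ext:Pxy}. In particular $\XY$ is separable as a product of separable metric spaces, the array $(Z_{n,i})$ is rowwise independent because the $Y_{n,i}$ are independent within each row and the $x_{n,i}$ are deterministic, and $n^{-1}\sum_{i=1}^n P_{n,i} \wto P_{X,Y}$.

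Second, the remaining three hypotheses required by \cref{thm:weakZ}, namely \cref{ass:critZ}, \cref{ass:trueZ} and \cref{ass:uiZ}, are assumed outright in the corollary, and with exactly the identifications $\Zz=\XY$, $Z_{n,i}=(x_{n,i}, Y_{n,i})$, $P_Z=P_{X,Y}$ that \cref{lem:from-conditional-to-triangular} produces. Hence all four assumptions of \cref{thm:weakZ} are simultaneously in force for this array. Moreover the estimator condition transfers verbatim: $\hetan$ is defined on the same probability space as $(x_{n,i}, Y_{n,i})_{i\in[n]} = (Z_{n,i})_{i\in[n]}$ and satisfies $M_n(\hetan) \ge M_n(\eta_0) - \oh_\pr(1)$, where $M_n(\eta) = n^{-1}\sum_{i=1}^n m_\eta(Z_{n,i})$ is precisely the random criterion function \eqref{eq:MnZ} built from the triangular array. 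Applying \cref{thm:weakZ} then yields $\hetan \to \eta_0$ in probability, i.e., weak consistency.

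I do not anticipate a genuine obstacle here; the only point deserving a moment's care is the bookkeeping around the "same probability space" clause. One should note that \cref{thm:weakZ} only requires the variables within each fixed row $n$ (together with the corresponding estimator) to live on a common probability space, which is exactly what \cref{ass:dgpZ-conditional} and the corollary's hypothesis on $\hetan$ supply, so no coupling across rows is needed for the weak-consistency conclusion. (This is in contrast to the strong-consistency version, \cref{cor:conditional:strong}, where a common probability space for the whole array is genuinely used.) With that observation recorded, the corollary follows.
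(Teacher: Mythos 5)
Your proposal is correct and matches the paper's intended argument: the corollary is stated immediately after \cref{lem:from-conditional-to-triangular} precisely because it follows by combining that lemma (to verify \cref{ass:dgpZ}) with \cref{thm:weakZ}, the other three hypotheses being assumed directly. The observation about the row-wise probability-space requirement is accurate and echoes the remark the paper makes after \cref{thm:weakZ}.
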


\section{Applications to optimum score and conditional maximum likelihood estimators}
\label{sec:applications}

As outlined in the introduction, the results from the previous sections can be applied to various common estimators. We start by dealing with optimum scoring rule estimation, then adapt these results to (conditional) maximum likelihood estimation, and finally examine a pseudo-maximum likelihood estimator in a specific heavy-tailed regression model for block maxima. While the dependence of the distribution of $Y_{n,i}$ on $n$ only enters through $x_{n,i}$ in the first two applications (i.e., $Q_n=Q$ in \cref{ass:dgpZ-conditional}), a more general dependence is needed for the latter application.

\subsection{Optimum score estimators with non-random covariates}
\label{subsec:optimum-score}

We specialize the set-up of Section~\ref{sec:consistency-covariates} to the case of optimum score estimators. Recall that $\Xx$ and $\Yy$ are separable metric spaces, while the parameter space $H$ is compact.

\begin{assumption}[Parametric distributional regression model and sampling scheme]
    \label{ass:paramodel}
    Let $\{x_{n,i} \colon n \in \nat, i \in [n] \} \subset \Xx$ be a deterministic triangular array satisfying \cref{ass:dgpZ-conditional}(a) and let $\{ Y_{n,i}\}_{i,n}$ be a rowwise independent $\Yy$-valued triangular array such that
    $
        Y_{n,i} \sim P_{\bth(x_{n,i}, \eta_0)},
    $
    where $(P_\theta \colon \theta \in \Theta)$ is a known parametric family of distributions, $\bth \colon \Xx \times H \to \Theta$ is a known link function and $\eta_0 \in H$ is the true unknown parameter of interest. The function $\bth$ is continuous while the map $\theta \mapsto P_\theta$ is one-to-one (i.e., the parameter $\theta$ is identifiable) and weakly continuous. Finally, $\eta_0$ is identifiable in the sense that, for the limit distribution $P_X$ in \cref{ass:dgpZ-conditional}(a), we have
    \begin{equation}
        \label{eq:eta-identifiable}
        \forall \eta \in H \setminus \cbr{\eta_0} : \qquad 
        P_X\rbr{\cbr{x \in \Xx : \bth(x,\eta) \ne \bth(x,\eta_0)}} > 0. 
    \end{equation}
\end{assumption}

In the notation of Section~\ref{sec:consistency-covariates}, the Markov kernel $Q_n(x,\point)=Q(x,\point)=P_{\bth(x, \eta_0)}(\point)$ does not depend on $n$. Moreover, the distribution $P_{X,Y}$ from \cref{eq_ext:Pxy} is 
\begin{equation}
    \label{eq:PXYeta0}
    P_{X,Y}(\diff (x, y)) = P_{\bth(x,\eta_0)}(\diff y) \, P_X(\diff x). 
\end{equation}

Let $\Pc$ be a convex set of probability measures on $\Yy$. Given an outcome $y \in \Yy$  we wish to attach a score to a probabilistic forecast $P \in \Pc$, with higher scores being better. A \emph{strictly proper scoring rule} is a map $S : \Pc \times \Yy \to [-\infty,\infty]$ with the following properties: 
\begin{itemize}
\item[(i)] For every $P \in \Pc$, the map $y \mapsto S(P, y)$ is Borel measurable.
\item[(ii)] For every $P, P' \in \Pc$, the integral $S(P', P) = \int_{\Yy} S(P', y) \, P(\diff y)$ exists in $[-\infty,\infty]$.
\item[(iii)] For every $P, P' \in \Pc'$, we have $S(P, P) \ge S(P', P)$, with strict inequality if $P \ne P'$.
\end{itemize}
We refer to \cite{gneiting2007strictly, dawid2016minimum} for further context and references. 

\begin{assumption}[Scoring rule]
    \label{ass:scoring}
    $S : \Pc \times \Yy \to [-\infty,\infty]$ 
    is a strictly proper scoring rule on $\Pc$, a convex set of probability measures on $\Yy$ containing $P_\theta$ for all $\theta \in \Theta$, and satisfies $S(P_\theta, y)<\infty$ for all $(\theta,y) \in \Theta \times \Yy$. The map $(\theta,y) \mapsto S(P_\theta,y)$ is upper semicontinuous and the map $(x,y) \mapsto S(P_{\bth(x,\eta_0)},y)$ is continuous $P_{X,Y}$-almost everywhere. 
\end{assumption}

For $\eta \in H$, define $m_\eta : \XY \to [-\infty, \infty)$ by
$
    m_\eta(x,y) = S(P_{\bth(x,\eta)}, y).
$
The random and limiting criterion functions in \cref{eq:MnZ,eq:MZ} become
\begin{align}
    \label{eq:score:Mn}
    M_n(\eta) 
    &= \frac{1}{n} \sum_{i=1}^n S(P_{\bth(x_{n,i},\eta)}, Y_{n,i}), \\
    \nonumber
    M(\eta) 
    &= \int_{\Xx} \int_{\Yy} 
        S(P_{\bth(x,\eta)}, y) \, 
    P_{\bth(x,\eta_0)}(\diff y) \, P_X(\diff x) 
    = \int_{\Xx} S(P_{\bth(x,\eta)}, P_{\bth(x,\eta_0)}) \, P_X(\diff x).
\end{align}
We will prove below that the function $\eta \mapsto M(\eta)$ is well defined (i.e., for every $\eta\in H$, the map $x \mapsto S(P_{\bth(x, \eta)}, P_{\bth(x, \eta_0)})$ is quasi-integrable with respect to $P_X$) as a consequence of the following stochastic dominance condition; see \cref{eq:def:stochdom}. 

\begin{assumption}[$L^2$-stochastic dominance for scoring rules]
\label{ass:scoring-dominance}
\begin{itemize}
    \item[(i)] The family of random variables $\{S(P_\theta,Y): \theta \in \Theta, Y \sim P_\theta \}$ is $L^2$-stochastically dominated.
    \item[(ii)] For every $\eta \in H \setminus \cbr{\eta_0}$ there exists $\delta > 0$ such that 
    the family of random variables
    \begin{equation}
    \label{eq:Setadeltafamily}
        \Big\{
            \sup_{\bar{\eta}\in\clB(\eta,\delta)} 
                \bigl(S(P_{\bth(x,\bar{\eta})}, Y)\bigr)^+ : 
                x \in \Xx, Y \sim P_{\bth(x,\eta_0)}
        \Big\} 
    \end{equation}
    is $L^2$-stochastically dominated. 
\end{itemize}
\end{assumption}

\begin{theorem}
    \label{thm:scoring:strong}
    If Assumptions~\ref{ass:paramodel}, \ref{ass:scoring}, and \ref{ass:scoring-dominance} are met, then every estimator $\hat \eta_n$ satisfying $M_n(\hat \eta_n) \geq M_n(\eta_0) - \oh_{a.s.}(1)$ is strongly consistent for $\eta_0$.
\end{theorem}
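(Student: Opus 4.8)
The plan is to derive the statement directly from \cref{cor:conditional:strong}, applied with $\Zz = \XY$, $Z_{n,i} = (x_{n,i}, Y_{n,i})$, $P_Z = P_{X,Y}$ as in \cref{eq:PXYeta0}, and $m_\eta(x,y) = S(P_{\bth(x,\eta)}, y)$, so that $M_n$ and $M$ coincide with the criterion functions in \cref{eq:score:Mn}. First, \cref{ass:dgpZ-conditional} must be checked: part~(a) is postulated in \cref{ass:paramodel}; and since here $Q_n = Q$ with $Q(x,\point) = P_{\bth(x,\eta_0)}$, part~(b) reduces to weak continuity of $x \mapsto P_{\bth(x,\eta_0)}$, which holds because $\bth(\point,\eta_0)$ is continuous and $\theta \mapsto P_\theta$ is weakly continuous. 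It then remains to verify Assumptions~\ref{ass:critZ}, \ref{ass:trueZ} and \ref{ass:stochdomZ}.

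For \cref{ass:critZ}: $H$ is compact by hypothesis; $(\eta,x,y) \mapsto S(P_{\bth(x,\eta)},y)$ is upper semicontinuous as the composition of the upper semicontinuous map $(\theta,y) \mapsto S(P_\theta, y)$ with the continuous map $(\eta,x) \mapsto \bth(x,\eta)$, and it never equals $+\infty$ since $S(P_\theta,y) < \infty$ throughout. For quasi-integrability of each $m_\eta$, I would use that $L^2$-stochastic dominance entails a uniform $L^1$-bound: \cref{ass:scoring-dominance}(i) provides $W$ with $\sup_\theta \expec[\,|S(P_\theta, Y_\theta)|\,] \le \expec[W] < \infty$ for $Y_\theta \sim P_\theta$, so integrating out $y$ first gives $\int m_{\eta_0}^+ \, \diff P_{X,Y} \le \expec[W] < \infty$; the analogous argument applied to \cref{ass:scoring-dominance}(ii) bounds $\int m_{\clB(\eta,\delta)}^+ \, \diff P_{X,Y}$ (hence also $\int m_\eta^+ \, \diff P_{X,Y}$) for $\eta \ne \eta_0$. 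This also shows that $M(\eta) = \int_\Xx S(P_{\bth(x,\eta)}, P_{\bth(x,\eta_0)}) \, P_X(\diff x)$ is well defined in $[-\infty,\infty)$ and that $|M(\eta_0)| < \infty$. Assumption~\ref{ass:stochdomZ} then transfers almost verbatim from \cref{ass:scoring-dominance}: because $Y_{n,i} \sim P_{\bth(x_{n,i},\eta_0)}$, the variable $m_{\eta_0}(Z_{n,i})$ lies in the family of \cref{ass:scoring-dominance}(i), and $(m_{\clB(\eta,\delta)}(Z_{n,i}))^+ = \sup_{\bar\eta \in \clB(\eta,\delta)} (S(P_{\bth(x_{n,i},\bar\eta)}, Y_{n,i}))^+$ lies in the family \cref{eq:Setadeltafamily}, with $P_{X,Y} m_{\clB(\eta,\delta)} < \infty$ already established.

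The substantive step is \cref{ass:trueZ}. Fix $\eta \ne \eta_0$. Strict properness of $S$ yields, for every $x$, $S(P_{\bth(x,\eta_0)}, P_{\bth(x,\eta_0)}) \ge S(P_{\bth(x,\eta)}, P_{\bth(x,\eta_0)})$, with strict inequality whenever $P_{\bth(x,\eta)} \ne P_{\bth(x,\eta_0)}$, that is, whenever $\bth(x,\eta) \ne \bth(x,\eta_0)$, by injectivity of $\theta \mapsto P_\theta$. The identifiability condition \cref{eq:eta-identifiable} ensures that this set of $x$ has positive $P_X$-mass; since the integrand difference is nonnegative and the dominating term is bounded, integrating against $P_X$ yields $M(\eta) < M(\eta_0)$, so $\eta_0$ is the unique point of finite maximum, and continuity of $m_{\eta_0}$ at $P_{X,Y}$-almost every point is exactly the final clause of \cref{ass:scoring}. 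Invoking \cref{cor:conditional:strong} then completes the proof. I expect the main obstacle to be precisely this last integration argument — upgrading an almost-everywhere strict inequality to a strict inequality of $P_X$-integrals while allowing $M(\eta) = -\infty$ — alongside the routine but necessary measurability verifications (for instance that $m_{\clB(\eta,\delta)}$ and $x \mapsto S(P_{\bth(x,\eta)}, P_{\bth(x,\eta_0)})$ are Borel), which rely on upper semicontinuity of $m$ and separability of $H$.
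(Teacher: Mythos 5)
Your proposal follows essentially the same route as the paper: apply \cref{cor:conditional:strong} with $\Zz = \XY$, $Z_{n,i} = (x_{n,i}, Y_{n,i})$, $P_Z = P_{X,Y}$ and $m_\eta(x,y) = S(P_{\bth(x,\eta)}, y)$, and verify Assumptions~\ref{ass:dgpZ-conditional}, \ref{ass:critZ}, \ref{ass:trueZ} and \ref{ass:stochdomZ} from Assumptions~\ref{ass:paramodel}, \ref{ass:scoring} and \ref{ass:scoring-dominance}. The only cosmetic difference is in how quasi-integrability is justified: the paper invokes strict properness to get $S(P_{\bth(x,\eta)}, P_{\bth(x,\eta_0)}) \le S(P_{\bth(x,\eta_0)}, P_{\bth(x,\eta_0)})$ together with $\sup_\theta |S(P_\theta,P_\theta)| < \infty$, whereas you bound $\int m_{\clB(\eta,\delta)}^+\,\diff P_{X,Y}$ directly via the $L^2$-stochastic dominance of \cref{ass:scoring-dominance}; both points are used in both versions, just in a slightly different order, and the conclusion is the same.
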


For the corresponding result about weak consistency, it is sufficient to replace the words ``$L^2$-stochastically dominated'' in \cref{ass:scoring-dominance} by ``uniformly integrable'' and replace the $\oh_{a.s.}(1)$ term in \cref{thm:scoring:strong} by $\oh_{\pr}(1)$.

\begin{example}[Optimum energy score estimation] \label{ex:energy-score}
For positive integer $d$ and for $0 < \beta < 2$, let $\Pc_\beta(\Rd)$ be the set of probability measures $P$ on $\Rd$ such that $\int_{\Rd} \nbr{y}^\beta \, P(\diff y) < \infty$, where $\nbr{\point}$ denotes the Euclidean norm. The \emph{energy score} \citep{szekely2003E} $\ES : \Pc_\beta(\Rd) \times \Rd \to \reals$ is defined as
\[
    \ES(P, y) 
    = \frac{1}{2} \expec \bigl[ \nbr{Y-Y'}^\beta \bigr] - \expec\bigl[\nbr{Y-y}^\beta\bigr],
\]
where the random vectors $Y$ and $Y'$ are independent and have distribution $P$. The energy score is strictly proper relative to $\Pc_\beta(\Rd)$ thanks to \citet[Theorem~1]{szekely2003E}; see also \citet[Sections~4.3 and~5.1]{gneiting2007strictly}. The special case $d = 1$ and $\beta = 1$ yields the Continuous Ranked Probability Score (CRPS). The energy score is itself a special case of the \emph{kernel score} introduced in \citet{dawid2006geometry-published}, see also \citet[Section~5.1]{gneiting2007strictly}.

Suppose that \cref{ass:paramodel} is met with $\Yy = \R^d$ and with $P_\theta \in \Pc_\beta(\Rd)$ for all $\theta \in \Theta$. Further assume that there exists $\delta > 0$ such that
    \begin{align} \label{eq:energy-moments}
        \sup_{\theta \in \Theta} \int_{\Rd} 
            \nbr{y}^{2\beta+\delta} \, 
        P_\theta(\diff y) < \infty.
    \end{align}
Then every optimum energy score estimator $\hat \eta_n$ satisfying $M_n(\hat \eta_n) \geq M_n(\eta_0) - \oh_{a.s.}(1)$ is strongly consistent for $\eta_0$, where
\begin{equation*}
    M_n(\eta) 
    = \frac{1}{n} \sum_{i=1}^n \ES(P_{\bth(x_{n,i},\eta)}, Y_{n,i}),
    \qquad \eta \in H,
\end{equation*}
See \cref{sec:applications-proofs} for the proof.
A weak consistency statement only requires \cref{eq:energy-moments} with the exponent $2\beta+\delta$ replaced by $\beta+\delta$; we omit further details.
\end{example}

\subsection{Conditional maximum likelihood estimation}
\label{subsec:conditional-mle}

Conditional maximum likelihood estimators arise as a special case of optimum score estimators in \cref{subsec:optimum-score}. Specifically, we will work under \cref{ass:paramodel} and additionally assume that the parametric model $(P_\theta : \theta \in \Theta)$ is dominated by a $\sigma$-finite measure $\nu$ on $\Yy$ with densities $p_\theta = \diff P_\theta/\diff \nu$ for $\theta \in \Theta$.
As scoring rule in \cref{ass:scoring} we consider the \emph{logarithmic score}
\begin{equation}
    \label{eq:LogS}
    \LogS(P, y) = \log\Big(\frac{\diff P}{\diff \nu}(y)\Big)  
    \qquad P \in \Pc, \, y \in \Yy,
\end{equation}
which is strictly proper with respect to the family $\Pc$ of probability measures on $\Yy$ dominated by $\nu$; see \citet[Section~4.1]{gneiting2007strictly} for context and references. If $P=P_\theta$ for some $\theta \in \Theta$, we also write
\[
\LogS(P, y) = \log p_\theta(y) = \ell_\theta(y),
\]
such that $m_\eta(x,y) = \ell_{\bth(x,\eta)}(y)$ and such that the criterion function $M_n$ in \cref{eq:score:Mn} is equal to the conditional log-likelihood function 
\[
    M_n(\eta) 
    = \frac{1}{n} \sum_{i=1}^n \ell_{\theta(x_{n,i}, \eta)}(Y_{n,i}).
\]
Note that $y$ does not need to belong to the support of $P_\theta$, so that the value $\ell_\theta(y) = - \infty$ is allowed; since $\ell_\theta(y) = +\infty$ can only occur on a $\nu$-null set, we can and will exclude it altogether. A maximiser $\hetan$ of $\eta \mapsto M_n(\eta)$ is a \emph{conditional maximum likelihood estimator}. Under the following set of conditions, its consistency is an immediate corollary of \cref{thm:scoring:strong}.

\begin{assumption}
    \label{ass:conditional-mle}
    \cref{ass:paramodel} holds and the parametric model $(P_\theta : \theta \in \Theta)$ is dominated by a $\sigma$-finite measure $\nu$ on $\Yy$ with densities $p_\theta = \diff P_\theta/\diff \nu$ for $\theta \in \Theta$. Moreover: 
    \begin{enumerate}[label=(\roman*)]
        \item the map $(\theta,y) \mapsto p_\theta(y)$  takes values in $[0,\infty)$ and is upper semicontinuous;
        \item the map $(x,y) \mapsto p_{\bth(x, \eta_0)}(y)$ is continuous $P_{X,Y}$-almost everywhere;
        \item the family $\{ \ell_\theta(Y) : \theta \in \Theta, Y \sim P_\theta\}$ is $L^2$-stochastically dominated;
        \item for every $\eta \in H \setminus \cbr{\eta_0}$ there exists $\delta > 0$ such that $\{\sup_{\bar{\eta}\in\clB(\eta,\delta)} (\ell_{\bth(x,\bar{\eta})}(Y))^+ : x \in \Xx, Y \sim P_{\bth(x,\eta_0)}\}$ is $L^2$-stochastically dominated. 
    \end{enumerate}
\end{assumption}

The most complicated condition is (iv). A simple and useful sufficient condition is $\sup_{\theta \in \Theta, y \in \Yy} p_\theta(y) < \infty$; see, for instance, \cref{ex:cmle_gev,ex:cmle_gpd} below.

\begin{corollary}
    \label{cor:cmle:strong}
    If \cref{ass:conditional-mle} holds, then every estimator sequence $\hetan$ such that $M_n(\hetan) \ge M_n(\eta_0) - \oh_{a.s.}(1)$ as $n\to\infty$ is strongly consistent for $\eta_0$.
\end{corollary}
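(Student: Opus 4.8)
The plan is to recognise the conditional maximum likelihood estimator as the optimum score estimator associated with the logarithmic score $\LogS$ of \eqref{eq:LogS}, and then to invoke \cref{thm:scoring:strong}. Since \cref{ass:conditional-mle} already includes \cref{ass:paramodel}, it will suffice to verify that \cref{ass:scoring,ass:scoring-dominance} hold for the choice $S = \LogS$, taken relative to $\Pc$, the class of probability measures on $\Yy$ dominated by $\nu$. Once this is done, \cref{thm:scoring:strong} applies verbatim, because $m_\eta(x,y) = S(P_{\bth(x,\eta)},y) = \ell_{\bth(x,\eta)}(y)$ and the associated criterion $M_n$ is exactly the conditional log-likelihood function displayed before the statement.

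First I would dispatch \cref{ass:scoring}. The class $\Pc$ is convex, since a convex combination of $\nu$-dominated measures is again $\nu$-dominated, and it contains every $P_\theta$ by hypothesis. Strict propriety of $\LogS$ on $\Pc$---together with the measurability of $y \mapsto \LogS(P,y)$ and the existence of the expected scores demanded by the abstract definition---is classical, and I would simply cite \citet[Section~4.1]{gneiting2007strictly}. The finiteness requirement $S(P_\theta,y) < \infty$ holds because $\LogS(P_\theta,y) = \ell_\theta(y) = \log p_\theta(y)$ and $p_\theta(y) \in [0,\infty)$ by \cref{ass:conditional-mle}(i). For upper semicontinuity of $(\theta,y) \mapsto \log p_\theta(y)$ I would compose the upper semicontinuous map $(\theta,y) \mapsto p_\theta(y)$ of \cref{ass:conditional-mle}(i) with the nondecreasing continuous extended-real-valued function $\log : [0,\infty) \to [-\infty,\infty)$; the same composition argument, applied instead to the $P_{X,Y}$-almost-everywhere continuous map of \cref{ass:conditional-mle}(ii), gives $P_{X,Y}$-almost-everywhere continuity of $(x,y) \mapsto \log p_{\bth(x,\eta_0)}(y)$.

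\cref{ass:scoring-dominance} will then be immediate, since $\LogS(P_\theta,y) = \ell_\theta(y)$: part~(i) is literally \cref{ass:conditional-mle}(iii), and, writing $\LogS(P_{\bth(x,\bar\eta)},y) = \ell_{\bth(x,\bar\eta)}(y)$, part~(ii) is literally \cref{ass:conditional-mle}(iv). (The resulting $L^2$-stochastic dominance also secures well-definedness of $M$, as noted after \cref{thm:scoring:strong}.) Invoking \cref{thm:scoring:strong} then yields the claim.

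I expect the only delicate point to be the consistent handling of the extended-real-valued logarithm with the convention $\log 0 = -\infty$: one must observe that $\log : [0,\infty) \to [-\infty,\infty)$ is continuous and nondecreasing, so that post-composition with it preserves both upper semicontinuity and almost-everywhere continuity, and that the criterion function accordingly takes values in $[-\infty,\infty)$ as \cref{ass:critZ} requires. A secondary, purely bookkeeping, matter is confirming that $\LogS$ fits the abstract notion of a strictly proper scoring rule on the convex class $\Pc$, which is standard and recorded in the cited literature. Beyond these, the proof is a routine verification that the hypotheses of \cref{thm:scoring:strong} are in force, with no substantive analytic obstacle.
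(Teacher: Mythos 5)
Your proposal is correct and follows essentially the same route as the paper: reduce to \cref{thm:scoring:strong} by checking \cref{ass:paramodel,ass:scoring,ass:scoring-dominance} for $S=\LogS$, with strict propriety cited from \citet[Section~4.1]{gneiting2007strictly}, finiteness and the continuity requirements read off \cref{ass:conditional-mle}(i)--(ii), and \cref{ass:scoring-dominance} matching \cref{ass:conditional-mle}(iii)--(iv) verbatim. The only difference is that you spell out the composition argument (post-composing with the nondecreasing, continuous $\log:[0,\infty)\to[-\infty,\infty)$) and the convexity of $\Pc$, which the paper leaves implicit.
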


As before, weak consistency already holds if $L^2$-stochastic dominance in \cref{ass:conditional-mle}(iii) and (iv) is replaced by uniform integrability and if the $\oh_{a.s.}(1)$ term in \cref{cor:cmle:strong} is replaced by an $\oh_{\pr}(1)$ term.

\cref{cor:cmle:strong} establishes consistency for estimators in models with covariate-dependent parameters, which are central in extreme value analysis and its applications. A key challenge in such models is that the support of the distribution depends on the parameter itself, rendering classical maximum likelihood estimation theory inapplicable.

\begin{example}[Generalized extreme value distribution with covariate-dependent parameters]
\label{ex:cmle_gev}
    The density function of the generalized extreme value (GEV) distribution on $\Yy = \reals$ with location-scale-shape parameter vector $\theta = (\mu,\sigma,\xi) \in \reals \times (0,\infty) \times \reals$ is
    \[
        p_{\theta}(y) = 
        \begin{cases}
        \frac{1}{\sigma} \rbr{1 + \xi \frac{y - \mu}{\sigma}}^{-1-1/\xi} 
            \exp \cbr{ -\rbr{1+ \xi \frac{y-\mu}{\sigma}}^{-1/\xi} }, 
            & \text{if $\xi \ne 0$, $1 + \xi \frac{y-\mu}{\sigma} > 0$}, \\[1ex]
        \frac{1}{\sigma} e^{- (y-\mu)/\sigma} \exp\rbr{-e^{- (y-\mu)/\sigma}},
            & \text{if $\xi = 0$,} \\[1ex]
        0, & \text{otherwise.}
        \end{cases}
    \]
    The support of the GEV distribution is 
    \[ 
        \cbr{y \in \reals : \sigma + \xi(y-\mu) > 0} =
        \begin{cases}
            (\mu - \sigma/\xi, \infty) & \text{if $\xi > 0$,} \\
            \reals & \text{if $\xi = 0$,} \\
            (-\infty, \mu - \sigma/\xi) & \text{if $\xi < 0$,}
        \end{cases}
    \]
    and depends on the parameter vector $\theta$. 
    Let $\Xx$ and $H$ be compact metric spaces and let the link function $\bth : \Xx \times H \to \reals \times (0, \infty) \times \reals$ be continuous and such that $\Theta := \bth(\Xx \times H) \subset \reals \times (0, \infty) \times (-1,\infty)$, that is, the shape parameter is restricted to $\xi > -1$.
    In contrast to the general theory in the preceding sections, we assume that the covariate space $\Xx$ is compact rather than just separable, and this in order to deal with the $L^2$-stochastic dominance condition in \cref{ass:conditional-mle}, which involves suprema over $\theta \in \Theta$ and $x \in \Xx$.
    Under the set-up of \cref{ass:paramodel} with true $\eta_0 \in H$, the conditions in \cref{ass:conditional-mle} hold. As a consequence, the conditional maximum likelihood estimator in $H$ is strongly consistent for the regression parameter $\eta_0$. Note that distributional regression models for the GEV-distribution are very common in extreme-event attribution; see \cite{Phi20} and the references therein. 
\end{example}

\begin{example}[Generalized Pareto distribution with covariate-dependent parameters]
\label{ex:cmle_gpd}
    The generalized Pareto (GP) density with scale-shape parameter vector $\theta = (a, \xi) \in (0, \infty) \times \reals$ on $\Yy = (0, \infty)$ has density function
    \[
        p_\theta(y) =
        \begin{cases}
            a^{-1} \rbr{1 + \xi y / a}^{-1-1/\xi}
            & \text{if $\xi \ne 0$, $1 + \xi y / a > 0$,} \\
            a^{-1} e^{-y/a}
            & \text{if $\xi = 0$,} \\
            0 & \text{otherwise.}
        \end{cases}
    \]
    As for the GEV distribution, the support of the GP distribution depends on the parameter:
    \[
        \cbr{y \in (0, \infty) : a + \xi y > 0} =
        \begin{cases}
            (0, \infty) & \text{if $\xi \ge 0$,} \\
            (0, -a/\xi) & \text{if $\xi < 0$.}
        \end{cases}
    \]
    As in \cref{ex:cmle_gev}, we restrict attention to the case $\xi > -1$. If $\Xx$ and $H$ are compact metric spaces and if $\bth : \Xx \times H \to (0, \infty) \times (-1, \infty)$ is a continuous link function, then in the set-up of \cref{ass:paramodel}, \cref{cor:cmle:strong} yields the strong consistency of the conditional maximum likelihood estimator for the true regression parameter $\eta_0$ within $H$.
\end{example}

\subsection{Fitting heavy-tailed regression models to block maxima}
\label{subsec:block-maxima}

We apply the triangular set-up of \cref{sec:consistency-covariates} to show the consistency of the maximum likelihood estimator in a regression model for the Fréchet distribution fitted to heavy-tailed block maxima in the presence of covariates. The distribution of the block maxima depends on the block size and becomes Fréchet only in the limit as the block size tends to infinity. This feature motivates the dependence of the Markov kernel on $n$ in \cref{ass:dgpZ}. This section's main result, presented in \cref{thm:Frechet:strong}, can be regarded as a version of the main result in \cite{Dom15}, restricted to the heavy-tailed case but extended to the conditional setting.

Let $F_0$ be a cdf on $\reals$ in the max-domain of attraction of the Fréchet distribution with shape parameter $\alpha \in (0,\infty)$: there exists a sequence $a_r > 0$ such that for all $y \in \reals$ we have
\begin{equation}
    \label{eq:F0DoA}
    \lim_{r\to\infty} F_0^r(a_r y) = \Phi_\alpha(y) =
    \begin{dcases}
        \exp\rbr{-y^{-\alpha}}, & \text{if $y > 0$,} \\
        0, & \text{if $y \le 0$.}
    \end{dcases}
\end{equation}
For all $x$ in some separable metric space $\Xx$, let $F_x$ be a cdf on $\reals$ such that the map $(x, y) \mapsto F_x(y)$ is Borel measurable. Assume that there exists $c : \Xx \to (0, \infty)$ 
such that
\begin{equation}
    \label{eq:FxF0}
    \lim_{y\to\infty} \sup_{x \in \Xx} \abr{\frac{\log F_x(y)}{\log F_0(y)} - c(x)} = 0.
\end{equation}
In words, the tail $1 - F_x(y) \sim - \log F_x(y)$ can be approximated uniformly well by the tail of $F_0$, up to a covariate-dependent multiplicative constant $c(x)$. The assumption mirrors that of heteroscedastic extremes in \cite{EinDeHZho16}.

Similar as in \cite{Dom15}, we observe $n$ independent block maxima $M_{n,1},\ldots,M_{n,n}$ together with $n$ covariates $x_{n,1},\ldots,x_{n,n} \in \Xx$. 
The block maxima take the form
\begin{equation}
\label{eq:Mnixi}
    M_{n,i} = \max_{t \in [r_n]} \xi_{n,i;t},
\end{equation}
where $r_n \in \nat$ is the block size and $\xi_{n,i;1}, \dots, \xi_{n,i;r_n}$ is a potentially unobserved independent random sample drawn from $F_{x_{n,i}}$. The cdf of the rescaled block maximum $M_{n,i} / a_{r_n}$ is thus
\[
    \pr\rbr{M_{n,i}/a_{r_n} \le y} = F_{x_{n,i}}^{r_n}(a_{r_n} y).
\]
Asymptotically, this distribution is Fréchet with covariate-dependent scale parameter.

\begin{lemma}
    \label{lem:avoid-uniform-rv}
    Under \cref{eq:F0DoA,eq:FxF0}, we have, for all $y > 0$,
    \begin{equation}
        \label{eq:frechet-doa-uniform}
        \lim_{r\to\infty} \sup_{x \in \Xx} \abr{
            F_x^{r}(a_{r}y) - \Phi_\alpha\bigl(y/\sigma(x)\bigr)
        } = 0
        \qquad \text{with} \quad
        \sigma(x) = c(x)^{1/\alpha}.
    \end{equation}
\end{lemma}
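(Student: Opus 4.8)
The plan is to reduce the uniform limit \eqref{eq:frechet-doa-uniform} to two elementary inputs and then combine them: the scalar limit $r\log F_0(a_r y)\to-y^{-\alpha}$ read off from \eqref{eq:F0DoA}, and the uniform-in-$x$ control of the ratio $\log F_x/\log F_0$ furnished by \eqref{eq:FxF0}. The main point to watch is that $c$ is \emph{not} assumed bounded on $\Xx$, so one cannot naively pass to limits uniformly in $x$ inside the map $t\mapsto\exp(c(x)\,t)$; this will be circumvented by an explicit global bound on the elementary function $t\mapsto t\,e^{-ct}$ together with a multiplicative (rather than additive-in-the-exponent) decomposition of the error.

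\emph{Preliminaries.} Fix $y>0$. Since $F_0$ lies in the max-domain of attraction of $\Phi_\alpha$, its right endpoint is infinite and $a_r\to\infty$ (this is classical; in any case it is forced by \eqref{eq:F0DoA}, since a bounded subsequence of $(a_r)$ would drive $F_0^r(a_r y)$ to $0$ or $1$ rather than to $e^{-y^{-\alpha}}\in(0,1)$). Hence $F_0(a_r y)\in(0,1)$ for all large $r$, and taking logarithms in \eqref{eq:F0DoA} (using continuity of $\log$ at $e^{-y^{-\alpha}}>0$) gives $v_r:=r\log F_0(a_r y)\to-y^{-\alpha}$ as $r\to\infty$; write $v_r=-y^{-\alpha}+\gamma_r$ with $\gamma_r\to0$, and note $\abr{v_r}\le C$ for some constant $C$ and all large $r$, with $v_r<0$. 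Since $a_r y\to\infty$, evaluating \eqref{eq:FxF0} at the argument $a_r y$ yields
\[
\eps_r:=\sup_{x\in\Xx}\abr{\frac{\log F_x(a_r y)}{\log F_0(a_r y)}-c(x)}\;\longrightarrow\;0,\qquad r\to\infty.
\]

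\emph{Main estimate.} From $\sigma(x)^\alpha=c(x)$ one gets $\Phi_\alpha(y/\sigma(x))=\exp(-c(x)y^{-\alpha})$, so it suffices to bound $\abr{F_x^r(a_r y)-\exp(-c(x)y^{-\alpha})}$ uniformly in $x$. Set $\rho_r(x):=\log F_x(a_r y)/\log F_0(a_r y)$, so that $F_x^r(a_r y)=\exp(\rho_r(x)v_r)=\exp(c(x)v_r)\cdot\exp\bigl((\rho_r(x)-c(x))v_r\bigr)$. Since $\abr{\rho_r(x)-c(x)}\le\eps_r$ and $\abr{v_r}\le C$, the second factor differs from $1$ by at most $\kappa_r:=e^{\eps_r C}-1\to0$, uniformly in $x$; as $\exp(c(x)v_r)\in(0,1]$ (because $c(x)>0$, $v_r<0$), this gives $\sup_{x\in\Xx}\abr{F_x^r(a_r y)-\exp(c(x)v_r)}\le\kappa_r$. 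For the remaining term, fix $r$ large enough that $\abr{\gamma_r}<y^{-\alpha}/2$; the mean value theorem applied to $s\mapsto e^{-ts}$ on the interval with endpoints $y^{-\alpha}$ and $y^{-\alpha}-\gamma_r$ (both exceeding $y^{-\alpha}/2$) gives, with $t:=c(x)\ge0$,
\[
\abr{\exp(c(x)v_r)-\exp(-c(x)y^{-\alpha})}\;\le\;\abr{\gamma_r}\,t\,e^{-t y^{-\alpha}/2}\;\le\;\abr{\gamma_r}\,\sup_{t\ge0}t\,e^{-t y^{-\alpha}/2}\;=\;\frac{2 y^{\alpha}}{e}\,\abr{\gamma_r}.
\]
Combining the two bounds via the triangle inequality yields $\sup_{x\in\Xx}\abr{F_x^r(a_r y)-\Phi_\alpha(y/\sigma(x))}\le\kappa_r+\tfrac{2 y^\alpha}{e}\abr{\gamma_r}\to0$, which is \eqref{eq:frechet-doa-uniform}.

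\emph{Remaining points.} The one genuine subtlety is the possible unboundedness of $c$: this is exactly why the error coming from $v_r\approx-y^{-\alpha}$ is estimated in the benign shape $t\mapsto t e^{-ct}$ (globally bounded on $t\ge0$) instead of through $\abr{c(x)\gamma_r}$, and why the $\eps_r$-error is kept as a multiplicative factor so that it is damped by $\exp(c(x)v_r)\le1$. Everything else is routine bookkeeping: one only needs $F_0(a_r y)\in(0,1)$ and $F_x(a_r y)>0$ for all large $r$ so that the logarithms and the ratio $\rho_r(x)$ are well defined and finite, which follows from $a_r\to\infty$, the infinite right endpoint of $F_0$, and the finiteness of $\eps_r$ for large $r$.
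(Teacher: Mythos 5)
Your proof is correct and uses the same core decomposition as the paper (writing $F_x^r(a_r y)=\exp\bigl(\rho_r(x)\,v_r\bigr)$ with $v_r=r\log F_0(a_r y)\to-y^{-\alpha}$ and $\rho_r(x)\to c(x)$ uniformly), but you go further in making the final "pass to the limit" step precise. The paper's two-line proof simply asserts that $\bigl(F_0^r(a_r y)\bigr)^{\rho_r(x)}\to\bigl(e^{-y^{-\alpha}}\bigr)^{c(x)}$ uniformly in $x$; without an a priori bound on $c$ this is not immediate, and you correctly flag this as the genuine subtlety. Your workaround — factoring the $\eps_r$-error out multiplicatively so it is damped by $\exp(c(x)v_r)\le1$, and controlling the remaining term via $\sup_{t\ge 0}t\,e^{-t\delta}=1/(e\delta)$ after a mean-value estimate — is clean and makes the uniformity explicit even when $c$ is unbounded. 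In short: same route as the paper, but you supply the bookkeeping the paper leaves implicit, and in doing so you identify and close a small gap in the paper's argument.
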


\begin{proof}
    Fix $y > 0$. For $x \in \Xx$ and $r \in \nat$, we have
    \[
        F_x^{r}(a_r y)
        = \exp \cbr{ r \log F_0(a_r y) \cdot \frac{\log F_x(a_r y)}{\log F_0(a_r y)}}
        = \rbr{F_0^{r}(a_r y)}^{\frac{\log F_x(a_r y)}{\log F_0(a_r y)}}.
    \]
    By \cref{eq:FxF0}, the big exponent converges to $c(x)$ uniformly in $x \in \Xx$, so by \cref{eq:F0DoA}, the whole expression converges to
    $
        [\exp(-y^{-\alpha})]^{c(x)}
        = \exp \rbr{-c(x) y^{-\alpha}},
    $
    uniformly in $x \in \Xx$; this is~\cref{eq:frechet-doa-uniform}. 
\end{proof}

\begin{remark}
    \label{rk:uniform-rv}
    A converse statement to \cref{lem:avoid-uniform-rv} holds as well in the sense that if $c(x)$ is bounded away from zero and infinity, then \eqref{eq:frechet-doa-uniform} implies the existence of $F_0$ such that \cref{eq:F0DoA,eq:FxF0} hold. To see this, simply put $F_0 = F_{x_0}$ for some arbitrary $x_0 \in \Xx$. \cref{eq:F0DoA} is then immediate, while \cref{eq:FxF0} requires some careful analysis, of which we omit the details for brevity. 
\end{remark}

We now suppose that the scale function $\sigma(x)$ in \cref{eq:frechet-doa-uniform} can be described by a parameter $\beta \in B$. A typical case is the loglinear model 
$
    \sigma_\beta(x) = \exp \bigl(\beta^\top x\bigr)
$
where $\Xx, B \subset \reals^d$. The aim is to estimate the triple $(\alpha, \beta, a_{r_n})$ on the basis of the observed covariate-maxima pairs $(x_{n,i},M_{n,i})$ for $i \in [n]$. Under \cref{eq:Mnixi}, the distribution of $M_{n,i}$ is approximately Fréchet with shape parameter $\alpha$ and scale parameter $a_{r_n} \sigma_\beta(x_{n,i})$, so we can hope to estimate the parameters consistently by (conditional) maximum likelihood.

The probability density function of the Fréchet distribution $\Phi_\alpha$ in \eqref{eq:F0DoA} is
\[
    p_{1,\alpha}(y) = 
    \begin{dcases} 
        \alpha y^{-\alpha-1} \exp \rbr{-y^{-\alpha}} & \text{if $y > 0$,} \\
        0 & \text{if $y \le 0$.}
    \end{dcases}
\]
The density of the Fréchet distribution with scale-shape parameter vector $(\tau, \alpha) \in (0, \infty)^2$ is thus
\[
    p_{\tau,\alpha}(y) 
    = \tau^{-1} p_{1,\alpha}(y/\tau)
    = \frac{\alpha}{\tau} (y/\tau)^{-\alpha-1} \exp \rbr{-(y/\tau)^{-\alpha}} 
\]
The rescaled loglikelihood based on the observed covariate-maxima pairs is then
\begin{equation}
    \label{eq:Fr:ll}
    L_n(\tau, \beta, \alpha) 
    := \frac{1}{n} \sum_{i=1}^n \log p_{\tau \sigma_\beta(x_{n,i}), \alpha}(M_{n,i}).
\end{equation}
Let $\alpha_0 \in (0, \infty)$ and $\beta_0 \in B$ denote the true shape and regression parameters, respectively, in the sense that for every $y \in (0,\infty)$ we have, as in \cref{lem:avoid-uniform-rv},
\[
    \lim_{n\to\infty}
    \max_{i\in[n]} \abr{
        \pr\rbr{M_{n,i}/a_{r_n} \le y}
        -
        \Phi_{\alpha_0}\rbr{y/\sigma_{\beta_0}(x_{n,i})}
    } = 0.
\]

\begin{theorem}
    \label{thm:Frechet:strong}
    Let $A \subset (0, \infty)$ be compact and let $B, \Xx$ be compact metric spaces. Let $(M_{n,i}: n \in \nat, i \in [n])$ be as in \cref{eq:Mnixi} with distributions $F_x$ as in \cref{eq:F0DoA,eq:FxF0}, where $\alpha_0 \in A$ denotes the true parameter and where $c(x) = \sigma_{\beta_0}(x)^{\alpha_0}$ for some continuous map $(x, \beta) \mapsto \sigma_{\beta}(x) \in (0,\infty)$ 
    and some $\beta_0 \in B$. Assume that $\log n = o(r_n)$, that $n^{-1} \sum_{i=1}^n x_{n,i} \wto P_X$, and that $\beta_0$ is identifiable in the sense that
    \begin{equation}
        \label{eq:Fr:ident}
        \forall (\beta, \gamma) \in \rbr{B \times (0, \infty)} \setminus \cbr{(\beta_0, 1)}:
        \qquad 
        P_X \rbr{\cbr{x \in \Xx : \gamma \sigma_\beta(x) \ne \sigma_{\beta_0}(x)}} > 0.
    \end{equation}
    Let $0 < \gamma_- < 1 < \gamma_+ < \infty$. Then any random sequence $\hetan = (\hat{\alpha}_n, \hat{\beta}_n, \hat{\tau}_n)$ in $A \times B \times [\gamma_- a_{r_n}, \gamma_+ a_{r_n}]$ such that $L_n(\hetan) \ge L_n(\alpha_0, \beta_0, a_{r_n}) + \oh_{a.s.}(1)$ satisfies
    \[
        \rbr{\hat{\alpha}_n, \hat{\beta}_n, \hat{\tau}_n/a_{r_n}}
        \to \rbr{\alpha_0, \beta_0, 1}, \qquad n \to \infty, \quad \text{a.s.}
    \]
\end{theorem}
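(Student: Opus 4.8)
The plan is to reparametrize so that the parameter space becomes compact and free of $n$, and then to invoke the triangular-array consistency result. Write $\tau = \gamma\, a_{r_n}$ with $\gamma \in [\gamma_-,\gamma_+]$, set $H := A \times B \times [\gamma_-,\gamma_+]$ and $\eta_0 := (\alpha_0,\beta_0,1) \in H$, and put $Y_{n,i} := M_{n,i}/a_{r_n}$, $\Yy := \reals$. The scaling relation $p_{c\tau,\alpha}(cy) = c^{-1} p_{\tau,\alpha}(y)$ applied with $c = a_{r_n}$ gives
\[
  \log p_{\gamma\, a_{r_n}\sigma_\beta(x_{n,i}),\,\alpha}(M_{n,i}) = -\log a_{r_n} + \log p_{\gamma\sigma_\beta(x_{n,i}),\,\alpha}(Y_{n,i}),
\]
so, setting $m_\eta(x,y) := \log p_{\gamma\sigma_\beta(x),\alpha}(y)$ for $\eta = (\alpha,\beta,\gamma)$ and $M_n(\eta) := n^{-1}\sum_{i=1}^n m_\eta(x_{n,i},Y_{n,i})$, the additive term $\log a_{r_n}$ cancels in every difference; the hypothesis $L_n(\hetan) \ge L_n(\alpha_0,\beta_0,a_{r_n}) + \oh_{a.s.}(1)$ becomes $M_n(\hat\alpha_n,\hat\beta_n,\hat\tau_n/a_{r_n}) \ge M_n(\eta_0) + \oh_{a.s.}(1)$, and the asserted limit is exactly strong consistency of $(\hat\alpha_n,\hat\beta_n,\hat\tau_n/a_{r_n})$ for $\eta_0$. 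It therefore suffices to verify \cref{ass:dgpZ-conditional} and Assumptions~\ref{ass:critZ}, \ref{ass:trueZ}, \ref{ass:stochdomZ} with the given $\Xx$, with $\Yy = \reals$ and $Y_{n,i} = M_{n,i}/a_{r_n}$, and to apply \cref{cor:conditional:strong}.

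For \cref{ass:dgpZ-conditional}: part~(a) is assumed, and for part~(b) I take $Q_n(x,\point)$ to be the law with cdf $y\mapsto F_x^{r_n}(a_{r_n}y)$. Since $\log n = o(r_n)$ forces $r_n\to\infty$, \cref{lem:avoid-uniform-rv} gives $\sup_x\abr{F_x^{r_n}(a_{r_n}y) - \Phi_{\alpha_0}(y/\sigma_{\beta_0}(x))}\to 0$ for every $y>0$; together with continuity of $x\mapsto\sigma_{\beta_0}(x)$ this yields continuous convergence of $Q_n$ to the kernel $Q(x,\point)=\Phi_{\alpha_0}(\point/\sigma_{\beta_0}(x))$, which is itself weakly continuous in $x$, so $P_{X,Y}(\diff(x,y)) = \Phi_{\alpha_0}(\diff y/\sigma_{\beta_0}(x))\,P_X(\diff x)$. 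For \cref{ass:critZ} and \cref{ass:trueZ}: $p_{\tau,\alpha}(y)$ is jointly continuous on $\reals\times(0,\infty)^2$ and vanishes continuously at $y=0$, so $(\eta,x,y)\mapsto m_\eta(x,y)$ is continuous into $[-\infty,\infty)$, hence upper semicontinuous; compactness of $A,B,\Xx$ confines $\gamma\sigma_\beta(x)$ to a compact set $K\subset(0,\infty)$, over which $\sup_{y}p_{\tau,\alpha}(y)<\infty$, so $m$ is bounded above and \cref{eq:bounded-m} holds — in particular part~(ii) of \cref{ass:stochdomZ} is automatic and $M(\eta)\in[-\infty,\infty)$ is well defined, with $M(\eta) = \int_\Xx \LogS(P_{\gamma\sigma_\beta(x),\alpha},P_{\sigma_{\beta_0}(x),\alpha_0})\,P_X(\diff x)$. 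Strict propriety of $\LogS$ on the Fréchet family makes the integrand maximal, for each $x$, iff $(\gamma\sigma_\beta(x),\alpha)=(\sigma_{\beta_0}(x),\alpha_0)$, so $M(\eta)=M(\eta_0)$ forces $\alpha=\alpha_0$ and $\gamma\sigma_\beta(x)=\sigma_{\beta_0}(x)$ for $P_X$-a.e.\ $x$, whence $(\beta,\gamma)=(\beta_0,1)$ by \cref{eq:Fr:ident}; $M(\eta_0)$ is finite (a Fréchet entropy with parameters in a compact set), and $m_{\eta_0}$ is continuous on $\Xx\times(0,\infty)$, which carries full $P_{X,Y}$-mass.

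The main obstacle is part~(i) of \cref{ass:stochdomZ}: $L^2$-stochastic dominance of $\{m_{\eta_0}(x_{n,i},Y_{n,i}): n\ge n_0,\ i\in[n]\}$. Since $m$ is bounded above only the lower tail matters, and from $m_{\eta_0}(x,y)=\log(\alpha_0/\tau)-(\alpha_0+1)\log(y/\tau)-(y/\tau)^{-\alpha_0}$ with $\tau=\sigma_{\beta_0}(x)\in K$ one gets $\abr{m_{\eta_0}(x,y)}\le C(1+y^{-\alpha_0}+\log^+ y)$; thus it suffices to dominate $(M_{n,i}/a_{r_n})^{-\alpha_0}$ in $L^2$ uniformly over $n\ge n_0,\ i\in[n]$, i.e.\ to bound $\pr(M_{n,i}/a_{r_n}\le u)=F_{x_{n,i}}^{r_n}(a_{r_n}u)$ for small $u$. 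I would split according to whether $a_{r_n}u$ exceeds a large threshold $s_0$ or not. For $a_{r_n}u\ge s_0$: using $\log v\le v-1$, \cref{eq:FxF0}, and the regular variation of $1-F_0$ together with Potter's bounds, $\pr(M_{n,i}/a_{r_n}\le u)\le\exp\{-r_n(1-F_{x_{n,i}}(a_{r_n}u))\}\le\exp(-c\,u^{-\alpha_0+\epsilon})$ uniformly in $x$ and in large $n$. For $a_{r_n}u<s_0$: choose $s_0$ large enough, via \cref{eq:FxF0}, that $q:=\sup_x F_x(s_0)<1$, so that $\pr(M_{n,i}/a_{r_n}\le u)\le q^{r_n}$; here the hypothesis $\log n=o(r_n)$ is essential, both to make this term negligible once $n_0$ is large and — via Borel--Cantelli applied to the events $\{\min_{i\in[n]}M_{n,i}/a_{r_n}\le s_0/a_{r_n}\}$, whose probabilities are $\le n q^{r_n}$ and hence summable — to ensure $M_n(\eta_0)>-\infty$ eventually almost surely. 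Assembling the two regimes produces a dominating variable $W$ with $\Ee[W^2]<\infty$. This estimate is the technical heart of the proof and parallels the analogous bounds in \citet{Dom15} and \citet{BucSeg18}: intuitively, $\log n=o(r_n)$ forces the blocks to be large enough that no rescaled block minimum $M_{n,i}/a_{r_n}$ is ever so small as to drag the log-likelihood to $-\infty$. Once part~(i) is secured, \cref{cor:conditional:strong} delivers the claim.
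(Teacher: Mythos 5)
Your overall strategy mirrors the paper's: reparametrize via $\gamma=\tau/a_{r_n}$ so that $H=A\times B\times[\gamma_-,\gamma_+]$ is compact and $n$-free, identify $m_\eta(x,y)=\log p_{\gamma\sigma_\beta(x),\alpha}(y)$, verify \cref{ass:dgpZ-conditional} through continuous convergence of the kernel (using \cref{lem:avoid-uniform-rv}) and \cref{ass:critZ,ass:trueZ} through boundedness-above of the Fréchet log-density, strict propriety of $\LogS$, and \cref{eq:Fr:ident}, and then invoke \cref{cor:conditional:strong}. All of that is sound.

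There is, however, a genuine gap in your verification of \cref{ass:stochdomZ}(i), and it is precisely the point the paper flags as delicate. You take $Y_{n,i}=M_{n,i}/a_{r_n}$ and $\Yy=\reals$. The hypotheses of the theorem control only the \emph{upper} tails of the $F_x$ (through \cref{eq:F0DoA,eq:FxF0}); nothing is assumed about the lower tails, so $M_{n,i}$ may take values arbitrarily close to $0$, or even be nonpositive, with positive probability for every fixed $n,i$. On that event $m_{\eta_0}(x_{n,i},Y_{n,i})=-\infty$, so for fixed $n\ge n_0$ the quantity $\pr\bigl(|m_{\eta_0}(Z_{n,i})|>t\bigr)$ is bounded \emph{below} by a positive constant (of order $q^{r_n}$) uniformly in $t$. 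No square-integrable $W$ can have $\pr(W>t)\ge q^{r_{n_0}}$ for all $t$, so $L^2$-stochastic dominance fails in the sense of \cref{eq:def:stochdom}, and \cref{cor:conditional:strong} cannot be applied. Your Borel--Cantelli remark shows that $M_n(\eta_0)>-\infty$ \emph{eventually almost surely}, but that is an asymptotic statement about the realized sample path, whereas \cref{ass:stochdomZ}(i) is a non-asymptotic distributional condition on each $m_{\eta_0}(Z_{n,i})$; the former does not deliver the latter.

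The paper's fix is to work instead with the truncated response $Y_{n,i}:=(M_{n,i}\vee 1)/a_{r_n}$ on $\Yy=(0,\infty)$, so that the Markov kernel $Q_n(x,\point)$ in \cref{eq:markov-kernel-frechet} has support $[1/a_{r_n},\infty)$. With that floor in place, the survival function of $(Y_{n,i}/\sigma_{\beta_0}(x))^{-\alpha_0}$ is \emph{identically zero} beyond a threshold of order $a_{r_n}^{\alpha_0}$, which is what makes the three-regime Potter-bound argument in \cref{lem:FrechetL2} close. The passage from the truncated to the untruncated estimator is then handled separately by \cref{lem:frechet-omega-0-probability-1}: under $\log n=o(r_n)$, almost surely $M_{n,i}\ge 1$ for all $i\in[n]$ and all $n$ large, so $\tilde L_n$ agrees with $L_n+\log a_{r_n}$ eventually a.s.\ and the near-maximizer property transfers. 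Your sketch essentially has the ingredients of both steps but applies them in the wrong order and to the wrong object; inserting the truncation at the outset is what repairs the argument.
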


\begin{remark}[Block size]
    The condition $\log n = o(r_n)$ requires that the block size $r_n$ grows faster than logarithmically with the number of block maxima $n$. It serves to ensure that with large probability, all block maxima are bounded away from zero, see \cref{lem:frechet-omega-0-probability-1}. The same condition has been imposed in \cite[Theorem 2]{Dom15}.
\end{remark}

\begin{remark}[Scaling sequence]
    The scaling sequence $a_{r_n}$ is unknown, and so is the search interval $[\gamma_- a_{r_n}, \gamma_+ a_{r_n}]$ for the overall scale parameter $\tau$ of the conditional maximum likelihood estimator $\hetan$. This issue can be solved by taking $\gamma_-$ and $\gamma_+$ sufficiently small and large, respectively, and replacing $a_{r_n}$ by the median (or any other fixed quantile) of $M_{n,1},\ldots,M_{n,n}$. This median is of the form $\rbr{\kappa + \oh_{a.s.}(1)} a_{r_n}$ with $\kappa > 0$ a model-dependent constant, more precisely the corresponding quantile of the $\sigma_\beta(X)$-scale mixture of the Fréchet distribution with shape parameter $\alpha_0$, where $X$ has distribution $P_X$.
\end{remark}

\section{Argmax theorems without uniform convergence}
\label{sec:argmax}

The Argmax continuous mapping theorem in \citet[Theorem~3.2.2]{vaaWel23} states conditions under which the argmax of a sequence of random criterion functions converges weakly to the argmax of the weak limit, in an appropriate function space, of those criterion functions. In Theorem~2.2 in \cite{lachout2005strong}, such functional convergence is not required, and an upper bound on suprema over certain sets suffices. The two theorems below provide slightly modified versions of the latter theorem, versions that are at the basis of our \cref{thm:strongZ} and \cref{thm:weakZ}.

Let $(\Ee, \Ec)$ be a measurable space.
Let $\Pemp$, for $n \in \nat$, be a sequence of random probability measures on $\Ee$ 
and let $P$ be a non-random probability measure on $\Ee$, respectively; we think of $\Pemp$ as being close to $P$ for large $n$.
Write integrals of extended real-valued functions $f: \Ee \to [-\infty, \infty]$ as $\Pemp f$ and $P f$.
Let $(H, d_H)$ be a compact metric space and let $m : H \times \Ee \to [-\infty, \infty)$ be a measurable function. We frequently write $m_\eta(z) = m(\eta, z)$ for $\eta \in H$ and $z \in \Ee$.
The integrals $M(\eta) = P m_\eta$ and $M_n(\eta) = \Pemp m_\eta$ are assumed to exist in $[-\infty, \infty]$ for all $\eta \in H$; condition~\ref{L-identifiable} below will in fact imply that $M(\eta)$ is not equal to $\infty$. Recall that $m_B(z) = \sup_{\eta \in B} m_\eta(z)$ for $B \subset H$ and $z \in E$,
and that $B(\eta, \delta)$ and $\clB(\eta,\delta)$ are the open and closed balls, respectively, in $H$ with center $\eta \in H$ and radius $\delta > 0$. 

\begin{theorem}   
\label{thm:lachout-modified}
Assume the above set-up with the following conditions:
\begin{enumerate}[label=(A\arabic*)]
\item \label{L-identifiable} 
The function $M:H \to [-\infty, \infty]$ has a unique point of finite maximum $\eta_0 \in H$, that is, 
\[
    -\infty \le M(\eta) < M(\eta_0)<\infty \qquad \text{ for all } \eta \in H \setminus \{ \eta_0\}.
\]
\item \label{L-semicontinuous}        
For every $\eta \in H$, there exists $V_\eta \in \Ec$ with $P(V_\eta) = 0$ such that for all $z \in \Ee \setminus V_\eta$, the function $\bar{\eta} \mapsto m_{\bar{\eta}}(z)$ is upper semicontinuous at $\eta$.
\item \label{L-bounded-integrals-convergence-suprema}
For all $\eta \in H \setminus \cbr{\eta_0}$ there exists $\delta > 0$ such that $P m_{\clB(\eta,\delta)} < \infty$ and such that, for all $\rho \in (0, \delta]$, we have 
\[
    \limsup_{n\to\infty} \Pemp m_{\clB(\eta,\rho)} \le P m_{\clB(\eta,\rho)},
    \qquad \text{a.s.}
\]
\item \label{L-convergence-eta0} We have
        \[
            \liminf_{n\to\infty} M_n(\eta_0) \ge M(\eta_0), \qquad \text{a.s.}
        \]
\end{enumerate}
Then every estimator $\hetan$ satisfying $M_n(\hetan) \ge M_n(\eta_0) - \oh_{a.s.}(1)$ is strongly consistent for the estimation of $\eta_0$, that is, $\lim_{n\to\infty} \hetan = \eta_0$ a.s. 
\end{theorem}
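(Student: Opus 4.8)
The argument is of Wald type. The first reduction is to observe that, since $H$ is a compact metric space, $\hetan \to \eta_0$ a.s.\ is equivalent to: for every $k \in \nat$, eventually $\hetan \in B(\eta_0, 1/k)$, i.e.\ $\hetan \notin F_k := H \setminus B(\eta_0,1/k)$, almost surely. Each $F_k$ is compact and does not contain $\eta_0$, and there are only countably many of them, so it suffices to prove the following claim for a generic compact $F \subset H$ with $\eta_0 \notin F$: eventually $\hetan \notin F$, a.s.; the exceptional null sets are then collected over $k$.

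To prove the claim, I would first localize at an arbitrary $\eta \in F$. Using \ref{L-bounded-integrals-convergence-suprema}, fix $\delta_\eta > 0$ with $P m_{\clB(\eta,\delta_\eta)} < \infty$. The key lemma is that $P m_{\clB(\eta,\rho)} \to M(\eta)$ as $\rho \downarrow 0$. Indeed, $\rho \mapsto m_{\clB(\eta,\rho)}(z)$ is non-increasing, and by the upper semicontinuity of $\bar\eta \mapsto m_{\bar\eta}(z)$ at $\eta$ guaranteed by \ref{L-semicontinuous} (for $z \notin V_\eta$, a $P$-null set), it decreases pointwise off a $P$-null set to $m_\eta(z)$. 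Applying monotone convergence to the non-negative, non-decreasing (as $\rho \downarrow 0$) functions $m_{\clB(\eta,\delta_\eta)} - m_{\clB(\eta,\rho)}$ and using $P m_{\clB(\eta,\delta_\eta)} < \infty$ to split the integral yields $P m_{\clB(\eta,\rho)} \to P m_\eta = M(\eta)$, with the convention that $M(\eta) \in [-\infty,\infty)$. Since $M(\eta) < M(\eta_0) < \infty$ by \ref{L-identifiable}, I may pick $\rho_\eta \in (0,\delta_\eta]$ with $P m_{\clB(\eta,\rho_\eta)} < M(\eta_0)$. Then \ref{L-bounded-integrals-convergence-suprema} gives $\limsup_{n\to\infty} \Pemp m_{\clB(\eta,\rho_\eta)} \le P m_{\clB(\eta,\rho_\eta)} < M(\eta_0)$ a.s., so off a null set, $\Pemp m_{\clB(\eta,\rho_\eta)}$ is eventually bounded above by some deterministic constant $c_\eta < M(\eta_0)$.

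Next I would use compactness: the open balls $\{B(\eta,\rho_\eta) : \eta \in F\}$ cover $F$, so there is a finite subcover $B(\eta_1,\rho_{\eta_1}),\dots,B(\eta_K,\rho_{\eta_K})$. Off the union of the $K$ associated null sets there is an index $N(\omega)$ such that for all $n \ge N(\omega)$ and every $j$, $\Pemp m_{\clB(\eta_j,\rho_{\eta_j})} < c := \max_{j} c_{\eta_j} < M(\eta_0)$. If $\hetan \in F$, then $\hetan \in B(\eta_j,\rho_{\eta_j}) \subseteq \clB(\eta_j,\rho_{\eta_j})$ for some $j$, so $m_{\hetan} \le m_{\clB(\eta_j,\rho_{\eta_j})}$ pointwise and hence $M_n(\hetan) = \Pemp m_{\hetan} \le \Pemp m_{\clB(\eta_j,\rho_{\eta_j})} < c$. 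On the other hand, $M_n(\hetan) \ge M_n(\eta_0) - \oh_{a.s.}(1)$ together with \ref{L-convergence-eta0} gives $\liminfn M_n(\hetan) \ge M(\eta_0) > c$ a.s. Consequently, off a single null set, $\hetan \in F$ can occur for only finitely many $n$; i.e.\ eventually $\hetan \notin F$, a.s. Taking $F = F_k$ for each $k \in \nat$ and intersecting the countably many null sets finishes the proof.

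\textbf{Main obstacle.} The delicate point is the limit $P m_{\clB(\eta,\rho)} \to M(\eta)$ as $\rho \downarrow 0$: one must handle extended-real-valued arithmetic carefully, since a priori $m_{\clB(\eta,\rho)}$ may equal $+\infty$ on a $P$-null set, $m_\eta$ may equal $-\infty$, and $M(\eta)$ may equal $-\infty$; the finiteness $P m_{\clB(\eta,\delta_\eta)} < \infty$ from \ref{L-bounded-integrals-convergence-suprema} is exactly what makes the monotone convergence argument and the splitting of the integral legitimate. The remaining ingredients — the compactness/finite-subcover step, the Wald-type contradiction, and the assembly of countably many null sets — are routine.
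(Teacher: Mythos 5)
Your proof is correct and follows essentially the same Wald-type route as the paper: the shrinking-ball limit $P m_{\clB(\eta,\rho)} \to M(\eta)$ (which you reprove via monotone convergence) is precisely the paper's Lemma~\ref{lem:ext_ballConv-abstract}, and the finite-subcover plus liminf/limsup contradiction is the paper's argument verbatim, with your explicit countable intersection over $F_k = H \setminus B(\eta_0,1/k)$ playing the role of the paper's appeal to ``continuity of measure'' after fixing $\delta_0$.
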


The following result is a variation of the previous theorem for weak rather than strong consistency.

\begin{theorem}
\label{thm:lachout-modified:weak}
Assume the above set-up with Assumptions~\ref{L-identifiable} and \ref{L-semicontinuous} as in \cref{thm:lachout-modified}, and with the following two assumptions:
\begin{enumerate}[label=(A\arabic*')]
    \setcounter{enumi}{2}
    \item \label{L-convergence-suprema-P} 
    For all $\eta \in H \setminus \cbr{\eta_0}$ there exists $\delta > 0$ such that $P m_{\clB(\eta,\delta)} < \infty$ and such that, for all $\rho \in (0, \delta]$, we have 
\[
    \Pemp m_{\clB(\eta,\rho)} \le P m_{\clB(\eta,\rho)} + \oh_{\pr}(1),
         \qquad n \to \infty.
\]
    \item \label{L-convergence-eta0-P} We have
    \[
        M_n(\eta_0) \ge M(\eta_0) - \oh_{\pr}(1),
        \qquad n \to \infty.
    \]
\end{enumerate}
Then every estimator $\hetan$ satisfying $M_n(\hetan) \ge M_n(\eta_0) - \oh_{\pr}(1)$ is weakly consistent for the estimation of $\eta_0$, that is, $\lim_{n\to\infty} \pr\sbr{d_H(\hetan,\eta_0) \ge \delta_0} = 0$ for every $\delta_0 > 0$.
\end{theorem}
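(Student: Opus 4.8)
The plan is to mount a Wald-type argument, structurally the same as the proof of \cref{thm:lachout-modified} but carrying $\oh_{\pr}(1)$ remainders in place of almost sure ones. Fix $\delta_0 > 0$ and let $K = \cbr{\eta \in H : d_H(\eta, \eta_0) \ge \delta_0}$, a closed—hence compact—subset of $H$ with $\eta_0 \notin K$ (if $K = \emptyset$ there is nothing to prove). The goal is to cover $K$ by finitely many small balls on each of which the limit criterion $M$ stays a fixed amount below $M(\eta_0)$, and then to show that $\cbr{\hetan \in K}$ forces a finite collection of $\oh_{\pr}(1)$ quantities to exceed a strictly positive constant, so that $\pr[\hetan \in K] \to 0$.

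First I would carry out the shrinking-ball step. For $\eta \in K$ choose $\delta(\eta) > 0$ as provided by \ref{L-convergence-suprema-P}. Since $\clB(\eta, \rho) \subset \clB(\eta, \delta(\eta))$ for $\rho \le \delta(\eta)$, the functions $m_{\clB(\eta, \rho)}$ are bounded above by $m_{\clB(\eta,\delta(\eta))}$, whose positive part is $P$-integrable, and they decrease pointwise as $\rho \downarrow 0$; by \ref{L-semicontinuous} their pointwise limit coincides with $m_\eta$ outside a $P$-null set, so monotone convergence yields $P m_{\clB(\eta, \rho)} \downarrow M(\eta)$ as $\rho \downarrow 0$. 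As $M(\eta) < M(\eta_0) < \infty$ by \ref{L-identifiable}, there are $\rho_\eta \in (0, \delta(\eta)]$ and $\eps_\eta > 0$ with $P m_{\clB(\eta, \rho_\eta)} < M(\eta_0) - 2\eps_\eta$. The open balls $B(\eta, \rho_\eta)$, $\eta \in K$, cover $K$; by compactness finitely many suffice, say centered at $\eta_1, \dots, \eta_N$ with radii $\rho_1, \dots, \rho_N$ and margins $\eps_1, \dots, \eps_N$, and I set $\eps^* = \min_{j \le N} \eps_j > 0$.

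Then I would chain the estimates. On the event $\cbr{\hetan \in B(\eta_j, \rho_j)}$ one has $m_{\hetan} \le m_{\clB(\eta_j, \rho_j)}$ pointwise, hence $M_n(\hetan) = \Pemp m_{\hetan} \le \Pemp m_{\clB(\eta_j, \rho_j)}$; combining this with \ref{L-convergence-suprema-P} (for $\rho = \rho_j$), with \ref{L-convergence-eta0-P}, and with the near-maximization hypothesis $M_n(\hetan) \ge M_n(\eta_0) - \oh_{\pr}(1)$ gives
\[
    M(\eta_0) - \zeta_n - \xi_n \;\le\; M_n(\hetan) \;\le\; \Pemp m_{\clB(\eta_j, \rho_j)} \;\le\; P m_{\clB(\eta_j, \rho_j)} + \omega_{n,j} \;<\; M(\eta_0) - 2\eps_j + \omega_{n,j},
\]
where $\zeta_n \ge 0$, $\xi_n \ge 0$, $\omega_{n,j} \ge 0$ may be taken as the nonnegative $\oh_{\pr}(1)$ remainders from \ref{L-convergence-eta0-P}, from near-maximality, and from \ref{L-convergence-suprema-P} for ball $j$, respectively. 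Hence $\cbr{\hetan \in B(\eta_j, \rho_j)} \subset \cbr{2\eps_j < \zeta_n + \xi_n + \omega_{n,j}}$. Since $K \subset \bigcup_{j \le N} B(\eta_j, \rho_j)$, a union bound gives
\[
    \pr\sbr{d_H(\hetan, \eta_0) \ge \delta_0} \le \pr\sbr{\hetan \in K} \le \pr\Big[\zeta_n + \xi_n + \max_{j \le N} \omega_{n,j} > 2\eps^*\Big] \longrightarrow 0,
\]
because a finite sum and maximum of nonnegative $\oh_{\pr}(1)$ sequences is again $\oh_{\pr}(1)$. As $\delta_0 > 0$ was arbitrary, this is the asserted weak consistency.

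The step I expect to be most delicate is not the probabilistic accounting but the measure-theoretic hygiene underlying it: one must check that the suprema $m_{\clB(\eta, \rho)}$ and the random quantity $\Pemp m_{\hetan}$ are measurable, and that every integral of the extended-real-valued integrands is well defined in $[-\infty, \infty]$ (and finite on the relevant events in the last chain). Measurability of $m_{\clB(\eta,\rho)}$ follows from separability of $H$ together with \ref{L-semicontinuous}, which lets one replace the ball by a countable dense subset of it up to a $P$-null set; the integrals are controlled because $P m_{\clB(\eta, \delta(\eta))} < \infty$ caps the positive part throughout and \ref{L-convergence-suprema-P} pins down the $\Pemp$-side. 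A shorter but less transparent alternative is to deduce the theorem from \cref{thm:lachout-modified} via the subsequence principle: along any subsequence one extracts a further subsequence on which the finitely many $\oh_{\pr}(1)$ terms appearing above converge to $0$ almost surely, and then \cref{thm:lachout-modified} applies; I would nevertheless prefer the direct route since it makes the role of the finite cover explicit.
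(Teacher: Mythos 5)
Your proposal is correct and follows essentially the same route as the paper's own proof: the Wald-type covering of the compact set $\cbr{\eta : d_H(\eta,\eta_0)\ge\delta_0}$ by finitely many closed balls on which $P m_{\clB(\eta,\rho)}$ is bounded strictly below $M(\eta_0)$ (the paper packages your inline shrinking-ball/monotone-convergence step as \cref{lem:ext_ballConv-abstract}), followed by chaining \ref{L-convergence-eta0-P}, the near-maximization hypothesis, and \ref{L-convergence-suprema-P} to place $\cbr{\hetan\in\Delta}$ inside an event whose probability vanishes. Your accounting is slightly more explicit (named remainders $\zeta_n,\xi_n,\omega_{n,j}$ and a ball-by-ball union bound instead of a single maximum over the cover), but the structure and the key ideas are identical.
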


\begin{remark}[Comparison with Theorem~2.2 in \cite{lachout2005strong}]
As mentioned above, \cref{thm:lachout-modified} is a slightly modified version of Theorem~2.2 in \cite{lachout2005strong}. Translating their notation to our setup, the most important differences are as follows. While they require $m_\eta$ to be real-valued, we permit $m_\eta(z) = -\infty$, thus accommodating conditional maximum likelihood estimators when the support is parameter-dependent. Their estimator $\hat \eta_n$ must satisfy $M_n(\hat \eta_n) \ge \sup_{\eta \in H} M_n(\eta) - \oh_{a.s.}(1)$, while we only require $M_n(\hat \eta_n) \ge M_n(\eta_0) - \oh_{a.s.}(1)$. Finally, they allow for the case where the criterion function $M$ may have multiple maxima, and they show almost sure convergence of the estimator to the set of maximizers, as in Theorem~5.14 in \cite{van1998asymptotic}. The latter conclusion coincides with ours in case the set of maximizers is a singleton, as required in condition~\ref{L-identifiable}.
\end{remark}

\section{Auxiliary results and proofs}
\label{sec:proofs}

\subsection{Proofs for Section~\ref{sec:consistency-general}}

Let $\Pemp = n^{-1} \sum_{i=1}^n \delta_{Z_{n,i}}$ denote the empirical distribution of the $n$th row of variables in \cref{ass:dgpZ}.

\begin{lemma}[Strong law of large numbers]
    \label{lem:PniLLNZ}
    Under \cref{ass:dgpZ}, if $f : \Zz \to \reals$ is Borel measurable and continuous $P_{Z}$-almost everywhere, and if the family $\{f(Z_{n,i}): n \ge n_0, i \in [n]\}$ is $L^2$-stochastically dominated for some $n_0 \in \nat$, then $P_{Z} |f| < \infty$ and $\Pemp f \to P_{Z} f$ almost surely as $n\to\infty$.
\end{lemma}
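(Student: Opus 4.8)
The plan is to split $\Pemp f - P_Z f$ into a deterministic bias term and a centered fluctuation term. Write $\bar P_n = n^{-1}\sum_{i=1}^n P_{n,i}$, so that $\bar P_n \wto P_Z$ by \cref{ass:dgpZ}, and let $W \ge 0$ with $\expec[W^2] < \infty$ dominate the family $\{f(Z_{n,i}) : n \ge n_0,\, i \in [n]\}$ as in \eqref{eq:def:stochdom}. For $n \ge n_0$ the elementary bound $\expec[|f(Z_{n,i})|] = \int_0^\infty \pr(|f(Z_{n,i})|>t)\,\diff t \le \int_0^\infty\pr(W>t)\,\diff t = \expec[W] < \infty$ shows $\bar P_n|f| < \infty$, so the decomposition $\Pemp f - P_Z f = (\Pemp f - \bar P_n f) + (\bar P_n f - P_Z f)$ is legitimate; the same computation applied to tails gives $\sup_{n\ge n_0}\int_{\{|f|>c\}}|f|\,\diff\bar P_n \le \expec[W\1_{\{W>c\}}] \to 0$ as $c\to\infty$, i.e.\ uniform integrability of $f$ along $(\bar P_n)_{n\ge n_0}$.

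For the deterministic term I would use the Skorokhod representation theorem, available since $\Zz$ is separable: take $\zeta_n \sim \bar P_n$ and $\zeta \sim P_Z$ on a common probability space with $\zeta_n \to \zeta$ almost surely. Since $\zeta$ almost surely avoids the $P_Z$-null discontinuity set of $f$, we get $f(\zeta_n)\to f(\zeta)$ a.s.; the uniform bound $\expec[|f(\zeta_n)|]\le\expec[W]$ together with Fatou yields $P_Z|f| = \expec[|f(\zeta)|] \le \expec[W] < \infty$ (the first claim of the lemma), and Vitali's convergence theorem, fed with the uniform integrability established above, yields $\bar P_n f = \expec[f(\zeta_n)] \to \expec[f(\zeta)] = P_Z f$.

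The crux is the fluctuation term $\Pemp f - \bar P_n f = n^{-1}\sum_{i=1}^n \xi_{n,i}$ with $\xi_{n,i} = f(Z_{n,i}) - \expec[f(Z_{n,i})]$: within each row these are independent and centered, and $\operatorname{Var}(n^{-1}\sum_i\xi_{n,i}) = \Oh(1/n)$ — enough for convergence in probability, but not summable, hence not enough for the almost-sure statement, and (this is the point) in a genuine triangular array no randomness is shared between rows along which one could run a subsequence argument. Almost-sure convergence here is precisely the strong law for arrays of rowwise independent, $L^2$-stochastically dominated random variables, \cite{HuMorTay89}, applied to $\{\xi_{n,i}\}$ (mean zero, dominated e.g.\ by $W + \expec[W]$). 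One route is a Fuk--Nagaev-type tail bound inside each row: for fixed $\delta>0$, isolating the contribution of values exceeding $n\delta/2$ gives
\[
    \pr\Bigl(\Bigl|\textstyle\sum_{i=1}^n\xi_{n,i}\Bigr|\ge n\delta\Bigr)
    \le 2n\,\pr\bigl(W+\expec[W]>n\delta/2\bigr)+\Oh(n^{-2}),
\]
and both terms are summable in $n$ — the first because $\expec[W^2]<\infty$ is equivalent to $\sum_n n\,\pr(W>cn)<\infty$ for every $c>0$. Borel--Cantelli then gives $|n^{-1}\sum_i\xi_{n,i}|<\delta$ eventually almost surely, and letting $\delta = 1/k\downarrow 0$ over $k\in\nat$ gives $n^{-1}\sum_i\xi_{n,i}\to 0$ a.s. Summing the three contributions yields $\Pemp f\to P_Z f$ a.s. I expect this last step to be the main obstacle: the almost-sure law of large numbers for triangular arrays is strictly harder than its i.i.d.\ or in-probability analogues and rests on a Nagaev-type tail inequality rather than a variance estimate, which is exactly why $L^2$-stochastic dominance — rather than the mere uniform integrability that suffices for \cref{thm:weakZ} — is imposed here.
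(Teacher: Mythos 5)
Your proof is correct and follows essentially the same route as the paper: the same decomposition $\Pemp f - P_Z f = (\Pemp f - \bar P_n f) + (\bar P_n f - P_Z f)$ (the paper writes $P_n$ for your $\bar P_n$), the same observation that the centered summands $\epsilon_{n,i} = f(Z_{n,i}) - P_{n,i} f$ are rowwise independent and $L^2$-stochastically dominated by $W$ plus a constant, and the same appeal to Theorem~2 of \cite{HuMorTay89} for the fluctuation term. The only real variation is cosmetic: for the deterministic bias term you invoke Skorokhod representation followed by Fatou and Vitali, whereas the paper pushes $P_n$ forward through $f$ (continuous mapping, \citealp[Theorem~1.11.1]{vaaWel23}) and then cites Example~2.21 of \cite{van1998asymptotic} for moment convergence under weak convergence plus bounded second moments; these are the same argument in two standard packagings, and both correctly use the separability of $\Zz$ and the $P_Z$-a.e.\ continuity of $f$.

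One caveat on the non-load-bearing part. Your Fuk--Nagaev side-sketch, truncating at level $n\delta/2$, claims $\Oh(n^{-2})$ for the truncated contribution, but with the centered truncated variables having variance $\Oh(1)$ and bound $\Oh(n)$, Chebyshev gives only $\Oh(n^{-1})$, a fourth-moment Rosenthal bound still gives $\Oh(n^{-1})$, and Bernstein degenerates at that truncation level. So as written the two displayed terms are not both summable, and Borel--Cantelli would not close; the actual proof in \cite{HuMorTay89} uses a finer truncation/subdivision scheme. Since you explicitly cite that theorem for the result, this does not affect the validity of your proof, but the sketch should not be taken at face value.
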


\begin{proof}[Proof of \cref{lem:PniLLNZ}]
    First we show $P_{Z} |f| < \infty$. Let $W$ be a nonnegative random variable such that $\expec[W^2] < \infty$ and such that, for some $n_0 \in \nat$,
    \[ 
        \forall n \ge n_0, i \in [n], t \ge 0: \qquad 
        P_{n,i}\cbr{|f| > t} \le \pr(W > t). 
    \]
    Then $P_{n,i} f^2 \le \expec[W^2]$ and thus also $P_n f^2 \le \expec[W^2]$ where $P_n = n^{-1} \sum_{i=1}^n P_{n,i}$. It follows that $P_Z |f| = \lim_{k\to\infty} P_Z (|f| \wedge k)$ is finite, since 
    \[ 
        P_Z (|f| \wedge k) 
        = \lim_{n\to\infty} P_n (|f| \wedge k) 
        \le \lim_{n\to\infty} (P_n f^2)^{1/2} 
        \le (\expec[W^2])^{1/2}. 
    \]
    
    Next we show the almost sure convergence of $\Pemp f$. We have 
    \[ 
        \expec[\Pemp f] 
        = \frac{1}{n} \sum_{i=1}^n P_{n,i} f 
        = P_n f 
    \]
    and thus
    \[
        \Pemp f - \expec[\Pemp f]
        = \frac{1}{n} \sum_{i=1}^n \epsilon_{n,i}
        \qquad \text{where} \quad
        \epsilon_{n,i} = f(Z_{n,i}) - P_{n,i} f.
    \]
    The random variables $\epsilon_{n,i}$ are rowwise independent and the array $\cbr{\epsilon_{n,i} : n \ge n_0, i \in [n]}$ is $L^2$-stochastically dominated by $W + (\expec[W^2])^{1/2}$; indeed, $\abr{\epsilon_{n,i}} \le \abr{f(Z_{n,i})} + \abr{P_{n,i} f}$ and $\abr{P_{n,i} f} \le (P_{n,i} f^2)^{1/2} \le (\expec[W^2])^{1/2}$ for all $n \ge n_0$ and $i \in [n]$. By Theorem~2 in \cite{HuMorTay89}, we find $\Pemp f - \expec[\Pemp f] \to 0$ almost surely as $n \to \infty$.

    It remains to show that $P_n f \to P_{Z} f$ as $n \to \infty$. Since $P_n \wto P_{Z}$ as $n\to\infty$ and since $f$ is continuous $P_{Z}$-almost everywhere, the continuous mapping theorem \citep[Theorem~1.11.1]{vaaWel23} implies that $f_{\#} P_n \wto f_{\#} P_{Z}$ as $n \to \infty$; in words, the distribution of $f$ under $P_n$ converges to the one of $f$ under $P_{Z}$. Moreover, by $L^2$-stochastic dominance, $\sup_{n \ge n_0} P_n f^2 \le \sup_{n \ge n_0, i \in [n]} P_{n,i} f^2 < \infty$. By Example~2.21 in \cite{van1998asymptotic}, the first moment of $f$ under $P_n$ converges to the first moment of $f$ under $P_{Z}$.
\end{proof}

\begin{corollary}[Varadarajan's theorem for triangular arrays, \citealp{Var58}]
    \label{cor:VaradarajanZ}
    Under \cref{ass:dgpZ}, we have $\pr \rbr{\Pemp \wto P_{Z} \text{ as } n\to\infty} = 1$.
\end{corollary}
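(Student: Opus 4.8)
The plan is to mimic the classical proof of Varadarajan's theorem, substituting the triangular-array strong law \cref{lem:PniLLNZ} for the usual i.i.d.\ one. The first step is to record the standard fact that, because $(\Zz, d_\Zz)$ is separable, there exists a \emph{countable} family $\cbr{f_k : k \in \nat} \subset C_b(\Zz)$ of bounded continuous (indeed bounded Lipschitz) functions that determines weak convergence on $\Zz$: for probability measures $\mu$ and $(\mu_n)_n$ on $\Zz$, one has $\mu_n \wto \mu$ if and only if $\mu_n f_k \to \mu f_k$ for every $k \in \nat$. Such a family can be built from truncated distances to a countable dense subset of $\Zz$, and its existence requires neither completeness nor compactness of $\Zz$.

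The second step is to apply \cref{lem:PniLLNZ} to each $f_k$ separately. Since $f_k$ is continuous on all of $\Zz$, hence $P_Z$-almost everywhere continuous, and bounded, say $\abr{f_k} \le C_k$, the family $\cbr{f_k(Z_{n,i}) : n \in \nat,\, i \in [n]}$ is $L^2$-stochastically dominated by the constant random variable $W \equiv C_k$, so the hypotheses of \cref{lem:PniLLNZ} are met with $n_0 = 1$. The lemma then yields a measurable event $\Omega_k$ with $\pr(\Omega_k) = 1$ on which $\Pemp f_k \to P_Z f_k$ as $n \to \infty$ (that the limit is the "right" value $P_Z f_k$ is part of the lemma, coming from $\expec[\Pemp f_k] = P_n f_k \to P_Z f_k$ under \cref{ass:dgpZ}). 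Setting $\Omega^* = \bigcap_{k\in\nat} \Omega_k$, a countable intersection of probability-one events, we still have $\pr(\Omega^*) = 1$; on $\Omega^*$ the convergence $\Pemp f_k \to P_Z f_k$ holds simultaneously for all $k$, so the determining property of $\cbr{f_k}$ gives $\Pemp \wto P_Z$. Consequently the event $\cbr{\Pemp \wto P_Z}$ contains $\Omega^*$ and has probability one; its measurability is not an issue, since it coincides, up to the null set $(\Omega^*)^c$, with the manifestly measurable event $\bigcap_k \cbr{\lim_{n} \Pemp f_k = P_Z f_k}$.

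I do not expect a serious obstacle: the only ingredient beyond \cref{lem:PniLLNZ} is the existence of a countable weak-convergence-determining subclass of $C_b(\Zz)$ for a merely separable metric space, which is classical, and the remainder is the familiar "apply a strong law along a countable family of test functions and intersect the exceptional null sets" argument. The one mild subtlety worth spelling out if full self-containment is desired is precisely why truncated distances to a dense set suffice to determine weak convergence on a separable (not necessarily Polish) space — a routine consequence of the portmanteau theorem together with the fact that such functions generate the topology of $\Zz$.
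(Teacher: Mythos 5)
Your proof is correct and takes essentially the same route as the paper: the countable weak-convergence-determining family of bounded Lipschitz functions you invoke is precisely what \cref{lem:vara_sep_metr_wconv} supplies, and both proofs then apply \cref{lem:PniLLNZ} to each test function and intersect the resulting probability-one events. The only cosmetic difference is the construction of the countable class (truncated distances to a dense subset versus the paper's appeal to separability of $(\operatorname{Lip}_b(\Zz),\|\cdot\|_\infty)$ via Varadarajan's totally-bounded reduction).
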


\begin{proof}[Proof of \cref{cor:VaradarajanZ}] 
    By \cref{lem:vara_sep_metr_wconv}, it is sufficient to show that $\Pemp f \to P_{Z} f$ almost surely as $n\to\infty$ for every bounded and continuous function $f : \Zz \to \reals$. But this is an immediate consequence of \cref{lem:PniLLNZ}.
\end{proof}

\begin{lemma}\label{lem:vara_sep_metr_wconv}
    Let $P, P_1, P_2, \ldots$ be probability measures on a metric space $\Mm$ equipped with its Borel $\sigma$-field. If $\Mm$ is separable, there exists a countable set $\mathcal{D}\subset \operatorname{Lip}_b(\Mm)$ such that $P_n g \to P g$ for all $g \in \mathcal{D}$ already implies $P_n \wto P$.
\end{lemma}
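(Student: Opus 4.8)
\emph{Proof proposal.}
The plan is to construct $\mathcal D$ explicitly as a countable family of bounded Lipschitz bump functions attached to a countable dense subset of $\Mm$, and then to verify that convergence of the integrals over $\mathcal D$ forces the portmanteau inequality for open sets. Fix a countable dense set $\{x_k : k \in \nat\} \subset \Mm$. For $k \in \nat$ and rationals $0 < s < r$, define
\[
    \phi_{k,s,r}(x) = \min\!\left( 1, \frac{\rbr{r - d(x, x_k)}^+}{r - s} \right),
\]
which takes values in $[0,1]$, is Lipschitz with constant $(r-s)^{-1}$, equals $1$ whenever $d(x,x_k) \le s$, and vanishes whenever $d(x,x_k) \ge r$. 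Let $\mathcal D$ be the set of all finite pointwise maxima $\max_{j \le J}\phi_{k_j,s_j,r_j}$ ($J \in \nat$, $k_j \in \nat$, $s_j, r_j \in \mathbb{Q}_{>0}$, $s_j < r_j$). Then $\mathcal D$ is countable and $\mathcal D \subset \operatorname{Lip}_b(\Mm)$, since a finite maximum of $[0,1]$-valued functions with Lipschitz constants $L_1,\dots,L_J$ is again $[0,1]$-valued and Lipschitz with constant $\max_j L_j$.

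Now suppose $P_n g \to Pg$ for every $g \in \mathcal D$. As $P_n$ and $P$ are probability measures, by the portmanteau theorem \citep[see, e.g.,][]{vaaWel23} it suffices to show $\liminfn P_n(G) \ge P(G)$ for every open $G \subset \Mm$. Separability of $\Mm$ gives $G = \bigcup_{j\in\nat} B(x_{k_j}, r_j)$ for suitable $x_{k_j}$ in the dense set and $r_j \in \mathbb{Q}_{>0}$ with $B(x_{k_j},r_j) \subset G$; by continuity of $P$ from below, fix $J$ with $P\rbr{\bigcup_{j\le J} B(x_{k_j},r_j)} > P(G) - \eps$. Since $\{x : d(x,x_{k_j}) \le r_j - 1/m\}$ increases to $B(x_{k_j},r_j)$ as $m\to\infty$, continuity from below provides an integer $m$ (the same for all $j \le J$) with $r_j > 1/m$ for all $j\le J$ and $P(C) > P(G) - 2\eps$, where $C := \bigcup_{j\le J}\{x : d(x,x_{k_j}) \le r_j - 1/m\}$. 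Put $s_j := r_j - 1/m \in \mathbb{Q}_{>0}$, $U := \bigcup_{j\le J} B(x_{k_j},r_j)$, and $g := \max_{j\le J}\phi_{k_j,s_j,r_j} \in \mathcal D$. Then $\1_C \le g \le \1_U \le \1_G$, hence $P_n g \le P_n(G)$ for all $n$, and
\[
    P(G) - 2\eps < P(C) \le Pg = \limn P_n g \le \liminfn P_n(G).
\]
Since $\eps > 0$ is arbitrary, $\liminfn P_n(G) \ge P(G)$, and the portmanteau theorem yields $P_n \wto P$.

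No step carries a genuine analytic difficulty; the work is organizational. One must check that the $\phi_{k,s,r}$ and their finite maxima stay bounded and Lipschitz (immediate from truncating at $1$), that every open set in a separable metric space is a countable union of rational balls contained in it, and — the conceptual crux — that the \emph{fixed} countable class $\mathcal D$ is nevertheless flexible enough to approximate an arbitrary open set from inside: this is precisely why $\mathcal D$ is closed under finite maxima and indexed by rational centers (from the dense set) together with rational radii. A more abstract route would embed $\Mm$ homeomorphically into the Hilbert cube and pull back a countable convergence-determining class of polynomials in the coordinates, but the direct construction avoids the measurability subtleties that arise when the image of $\Mm$ fails to be closed.
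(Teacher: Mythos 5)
Your proof is correct, and it takes a genuinely different route from the paper's. The paper first reduces to the case where $\Mm$ is totally bounded (by the metric-replacement trick in Varadarajan's Theorem~3.1), then invokes separability of $(\operatorname{Lip}_b(\Mm), \|\cdot\|_\infty)$ (citing the proof of Dudley, Theorem~11.4.1) to obtain a countable dense family $\mathcal{D}$, and concludes by a $3\eps$ triangle-inequality argument for an arbitrary $f \in \operatorname{Lip}_b(\Mm)$. You instead build $\mathcal{D}$ explicitly from bump functions at a countable dense set with rational radii, closed under finite maxima, and verify the Portmanteau $\liminf$ inequality for open sets by a two-stage inner approximation (first by finitely many rational balls, then by the corresponding closed shrunken balls). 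Your route is more elementary and self-contained — it avoids both nontrivial citations and the totally-bounded reduction — at the cost of carrying out a direct construction and the portmanteau bookkeeping by hand. The paper's route is shorter once the separability of $\operatorname{Lip}_b$ is granted, and makes the role of $\mathcal{D}$ as a uniformly dense class transparent; yours makes $\mathcal{D}$ a fixed, concrete convergence-determining class, which is arguably more informative. One small remark: you establish the $\liminf$ inequality only for open sets and appeal to the portmanteau equivalence to get weak convergence; it is worth stating explicitly that this direction of portmanteau indeed holds for probability measures on an arbitrary metric space, so the reader does not need to wonder whether separability or completeness is being used there.
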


\begin{proof} 
    Fix $f \in \operatorname{Lip}_b(\Mm)$. By the Portmanteau theorem, it is sufficient to show that $P_n f \to Pf$.
    By the proof of Theorem 3.1 in \cite{varadarajan1958weak}, we may assume that $\Mm$ is totally bounded. Arguments in the proof of Theorem 11.4.1 in \cite{Dud02} yield the separability of $(\operatorname{Lip}_b(\Mm), \| \cdot \|_\infty)$. 
    Thus, there exists a countable and dense subset $\mathcal{D} \subset (\operatorname{Lip}_b(\Mm), \|\cdot \|_\infty)$. Let $\varepsilon > 0$ and choose a $g \in \mathcal{D}$ with $\|f - g\|_\infty < \varepsilon/3$. Further, by assumption there exists $n_0$ such that for all $n \geq n_0$ we have $|P_n g - P g | \leq \varepsilon/3$. Hence, 
    \[
        |P_n f - P f|
        \leq |P_n f - P_n g| + |P_ng - Pg | + |Pg - Pf|
        \leq \varepsilon
    \]
    for all $n \geq n_0$ which implies the assertion. 
\end{proof}

\begin{lemma}[Portmanteau for unbounded upper semicontinuous functions] 
    \label{lem:portmanteauuscZ}
    Under \cref{ass:dgpZ}, if $f : \Zz \to \reals$ is upper semicontinuous and quasi-integrable with respect to $P_{Z}$, and if the family $\{(f^+(Z_{n,i}): n \ge n_0, i \in [n]\}$ is $L^2$-stochastically dominated for some $n_0 \in \nat$, then 
    \[
    \pr\Big[\limsup_{n \to \infty}\Pemp f \le P_{Z} f \Big] = 1.
    \]
\end{lemma}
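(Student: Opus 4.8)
The plan is to decompose $f$ into a part bounded from above, which is controlled by the almost sure weak convergence $\Pemp \wto P_Z$ from \cref{cor:VaradarajanZ} via the Portmanteau theorem for upper semicontinuous functions, and an upper-tail remainder, which is controlled directly by the strong law of \cite{HuMorTay89} together with the $L^2$-stochastic dominance hypothesis. To begin, if $P_Z f^+ = \infty$ then quasi-integrability forces $P_Z f^- < \infty$, hence $P_Z f = +\infty$ and there is nothing to prove; so assume $P_Z f^+ < \infty$ from now on.

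For $M \in \nat$, write $f_M := \min(f, M)$, which is upper semicontinuous and bounded above by $M$, and note the pointwise identity $f = f_M + (f - M)^+$, so $\Pemp f = \Pemp f_M + \Pemp[(f-M)^+]$. On the almost sure event where $\Pemp \wto P_Z$, the Portmanteau theorem for upper semicontinuous functions bounded from above yields $\limsup_n \Pemp f_M \le P_Z f_M$ (if desired, deduce this from the bounded case by writing $f_M = (f_M \vee (-L)) - (f_M + L)^-$, applying the bounded Portmanteau to the bounded upper semicontinuous function $f_M \vee (-L)$, and letting $L \to \infty$ by monotone convergence, which is legitimate since $f_M^+ = f^+ \wedge M$ is $P_Z$-integrable).

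For the tail term, fix $M \in \nat$ and put $X_{n,i} := (f(Z_{n,i}) - M)^+$, so that $0 \le X_{n,i} \le f^+(Z_{n,i})$; hence $\{X_{n,i} : n \ge n_0, i \in [n]\}$ is rowwise independent and $L^2$-stochastically dominated by the same $W$, with $P_{n,i} X_{n,i} < \infty$. Exactly as in the proof of \cref{lem:PniLLNZ} (center by $P_{n,i} X_{n,i}$ and invoke Theorem~2 of \cite{HuMorTay89}), one gets $\Pemp[(f-M)^+] - P_n[(f-M)^+] \to 0$ almost surely, where $P_n = n^{-1} \sum_{i=1}^n P_{n,i}$. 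Since $(f-M)^+ \le f^+ \1_{\{f^+ > M\}}$, we have $P_n[(f-M)^+] \le \epsilon_M := \sup_{n \ge n_0, \, i \in [n]} P_{n,i}[f^+ \1_{\{f^+ > M\}}]$, and $\epsilon_M \to 0$ as $M \to \infty$ because $L^2$-stochastic dominance makes $\{f^+(Z_{n,i}) : n \ge n_0, i \in [n]\}$ uniformly integrable. Consequently, for each fixed $M \in \nat$, $\limsup_n \Pemp[(f-M)^+] \le \epsilon_M$ almost surely.

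Intersecting the countably many almost sure events indexed by $M \in \nat$ gives, almost surely and simultaneously for all $M$, $\limsup_n \Pemp f \le P_Z f_M + \epsilon_M$. Letting $M \to \infty$: the negative part $f_M^- = f^-$ does not depend on $M$ whereas $f_M^+ = f^+ \wedge M \uparrow f^+$, so monotone convergence gives $P_Z f_M \to P_Z f$ in $[-\infty, \infty)$ (using $P_Z f^+ < \infty$), while $\epsilon_M \to 0$; hence $\limsup_n \Pemp f \le P_Z f$ almost surely. The crux is the tail estimate: upper semicontinuity imposes no constraint on how $f$ behaves near $+\infty$ on $P_Z$-small sets, and it is precisely the $L^2$-stochastic dominance that renders the Hu--Moricz--Taylor strong law applicable to $(f - M)^+$ for every $M$, with a remainder $\epsilon_M$ that is uniform in $n$ and vanishes as $M \to \infty$; everything else is bookkeeping with truncation and monotone convergence.
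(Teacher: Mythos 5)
Your proof is correct. It uses the same toolkit as the paper's proof of Lemma~\ref{lem:portmanteauuscZ} — Varadarajan's theorem (\cref{cor:VaradarajanZ}) for the almost sure weak convergence $\Pemp \wto P_Z$, the Portmanteau inequality for upper semicontinuous functions bounded above, Theorem~2 of \cite{HuMorTay89} for the rowwise-independent strong law, and uniform integrability inherited from $L^2$-stochastic dominance — but organizes them around a different decomposition. The paper splits $f = f^+ + \min(f,0)$ at zero: it dispatches $\min(f,0)$ in one shot via the a.s.\ Portmanteau, and then handles $g_1 = f^+$ by the strong law plus a further deterministic truncation $\limsup_n P_n g_1 \le \limsup_n P_n[\min(g_1,k)] + \sup_n P_n[(g_1-k)^+]$, which requires a second Portmanteau application (to the nonrandom sequence $P_n \wto P_Z$) and uniform integrability, followed by $k\to\infty$. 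You instead split $f = \min(f,M) + (f-M)^+$ at level $M$: the Portmanteau is applied once to $\min(f,M)$, and the tail $(f-M)^+$ is handled by the strong law with the deterministic piece bounded outright by $\epsilon_M = \sup_{n,i} P_{n,i}[f^+\1_{\{f^+ > M\}}]$, so no second Portmanteau step is needed; the countable intersection over $M\in\nat$ and the monotone-convergence limit $P_Z f_M \to P_Z f$ then close the argument. The two proofs are equivalent in substance; your version merges the paper's $k$-truncation into the initial split, which is a small but genuine streamlining. One cosmetic remark: your aside justifying $\limsup_n \Pemp f_M \le P_Z f_M$ by first capping $f_M$ from below at $-L$ is correct but unnecessary, since item (v) of the Portmanteau Theorem~1.3.4 in \cite{vaaWel23}, which the paper invokes, already covers upper semicontinuous functions bounded only from above.
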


\begin{proof}[Proof of \cref{lem:portmanteauuscZ}] 
    Nothing has to be shown if $P_{Z}f=\infty$, so assume that $P_{Z}f<\infty$.
    Write $f=g_1 + g_2$ where $g_1 = \max(f,0) = f^+ \ge 0$ and $g_2 = \min(f,0) \le 0$, and note that both $g_1$ and $g_2$ are upper semicontinuous. Since $\Pemp f = \Pemp g_1 + \Pemp g_2$ and $P_Z f = P_Z g_1 + P_Z g_2$, it is sufficient to show that
    \begin{align}
    \label{eq:gjZ}
    \pr\Big[ \limsup_{n \to \infty} \Pemp g_j \le P_Z g_j \Big]= 1
    \end{align}
    for $j=1$ and $j=2$ separately. 

    For $j=2$, the claim in \cref{eq:gjZ} follows immediately from \cref{cor:VaradarajanZ} and item~(v) in the Portmanteau Theorem~1.3.4 in \citet{vaaWel23}, as $g_2$ is upper semicontinuous and bounded above by 0.

    For $j=1$, we may repeat the arguments in the second paragraph of the proof of \cref{lem:PniLLNZ} to find that $\Pemp g_1 - \expec[ \Pemp g_1 ]= \oh(1)$ almost surely as $n \to \infty$. Since $\expec[\Pemp g_1] = P_n g_1$ with $P_n = n^{-1} \sum_{i=1}^n P_{n,i}$, it remains to show that 
    \begin{equation}
    \label{eq:Png1PZg1}
        \limsup_{n \to \infty} P_n g_1 \le P_{Z} g_1.
    \end{equation}
    By assumption, the family $\cbr{(g_1)_{\#} P_{n,i}: n\ge n_0,i\in[n]}$ is $L^2$-stochastically dominated, and therefore so is the family $\cbr{(g_1)_{\#} P_n: n\ge n_0}$. As a consequence, $\sup_{n\ge n_0} P_n [g_1^2] < \infty$, so that $g_1$ is uniformly integrable under $P_n$, i.e., $\lim_{k\to\infty} \sup_{n\ge n_0} P_n [g_1 \1_{\{g_1>k\}}] = 0$. But then we have for all $k > 0$ the inequalities
    \begin{align*} 
        \limsup_{n\to\infty} P_n g_1 
        &\le \limsup_{n\to\infty} P_n [\min(g_1, k)] + \sup_
        {n \ge n_0} P_n [(g_1-k)^+] \\
        &\le P_{Z} [\min(g_1, k)] + \sup_{n \ge n_0} P_n [g_1 \1_{\{g_1>k\}}], 
    \end{align*}
    where, in the last inequality, we used again item~(v) in Theorem~1.3.4 in \citet{vaaWel23}.
    Letting $k \to \infty$ yields \cref{eq:Png1PZg1}, as required. 
\end{proof}

The following permanence property of upper semicontinuous functions can be easily proven using a subsequence argument but is also part of a special case of Bergman's maximum theorem; see for instance Lemma~17.30 in \cite{aliprantis2006infinite}.

\begin{lemma}[Upper semicontinuity]
\label{lem:usc}
	Let $\Zz$ and $H$ be metric spaces and let $f : H \times \Zz \to [-\infty,\infty]$ be upper semicontinuous.
    If $K \subset H$ is non-empty and compact, then the function $f_K : \Zz \to [-\infty,\infty]$ defined by
	\[
		f_K(z) = \sup_{\eta \in K} f(\eta, z), 
		\qquad z \in \Zz,
	\]
	is upper semicontinuous.
\end{lemma}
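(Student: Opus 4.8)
The plan is to prove upper semicontinuity of $f_K$ directly from the definition via a subsequence/limsup argument. Recall that $f_K : \Zz \to [-\infty, \infty]$ is upper semicontinuous at a point $z_0$ if for every sequence $z_j \to z_0$ in $\Zz$ we have $\limsup_{j\to\infty} f_K(z_j) \le f_K(z_0)$; since $\Zz$ is a metric space, sequential upper semicontinuity is equivalent to the topological notion. So I would fix $z_0 \in \Zz$ and a sequence $z_j \to z_0$, and set $L = \limsup_{j\to\infty} f_K(z_j)$. If $L = -\infty$ there is nothing to prove, so assume $L > -\infty$. Passing to a subsequence (not relabeled) along which $f_K(z_j) \to L$, I want to show $L \le f_K(z_0)$.

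The key step is to "lift" the suprema to witnessing points in the compact set $K$. For each $j$, either pick $\eta_j \in K$ with $f(\eta_j, z_j) = f_K(z_j)$ if the supremum is attained, or more robustly (to avoid worrying about attainment) pick $\eta_j \in K$ with $f(\eta_j, z_j) \ge f_K(z_j) - \varepsilon_j$ for some $\varepsilon_j \downarrow 0$; this works when $f_K(z_j) < \infty$, and one can handle the case $f_K(z_j) = +\infty$ by choosing $\eta_j$ with $f(\eta_j, z_j) \ge j$. By compactness of $K$, the sequence $(\eta_j)$ has a convergent subsequence $\eta_{j_l} \to \eta_* \in K$. Along this subsequence, $(\eta_{j_l}, z_{j_l}) \to (\eta_*, z_0)$ in $H \times \Zz$, so upper semicontinuity of $f$ on the product gives $\limsup_{l\to\infty} f(\eta_{j_l}, z_{j_l}) \le f(\eta_*, z_0)$. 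Combining, $L = \lim_{l} f_K(z_{j_l}) \le \limsup_l \bigl( f(\eta_{j_l}, z_{j_l}) + \varepsilon_{j_l} \bigr) \le f(\eta_*, z_0) \le f_K(z_0)$, using $f(\eta_*, z_0) \le \sup_{\eta \in K} f(\eta, z_0) = f_K(z_0)$ since $\eta_* \in K$. This is exactly the desired inequality.

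I should also note measurability is not part of the claim (the statement only asserts upper semicontinuity), so no separate measurability argument is needed; an upper semicontinuous function on a metric space is automatically Borel measurable anyway. The non-emptiness of $K$ is used implicitly to ensure $f_K$ is not identically $-\infty$ in a degenerate way and, more importantly, to ensure the witnessing points $\eta_j$ exist.

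The main obstacle, such as it is, is purely bookkeeping: handling the extended-real-valued nature of $f$ cleanly, in particular the possibility $f_K(z_j) = +\infty$ for some $j$ and the possibility that the supremum defining $f_K(z_j)$ is not attained. Both are dispatched by the approximate-maximizer trick described above (choose $\eta_j$ achieving $f(\eta_j, z_j) \ge \min\{ f_K(z_j) - \varepsilon_j,\, j\}$, say), so there is no genuine difficulty. Alternatively, as the statement remarks, one can simply cite Lemma~17.30 in \cite{aliprantis2006infinite} (a special case of Berge's maximum theorem) for the conclusion, but the self-contained subsequence argument above is short enough to include.
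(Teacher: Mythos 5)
Your argument is correct and is precisely the subsequence argument the paper alludes to before citing Lemma~17.30 of Aliprantis and Border (a special case of Berge's maximum theorem). The one point of divergence is the bookkeeping around non-attainment and the value $+\infty$: rather than constructing $\varepsilon_j$-approximate maximizers together with a separate branch for $f_K(z_j)=+\infty$, it is slightly cleaner to fix an arbitrary real $t<\limsup_{j} f_K(z_j)$, extract a subsequence along which $f_K(z_j)>t$, and for each such $j$ choose $\eta_j\in K$ with $f(\eta_j,z_j)>t$ (such $\eta_j$ exists by the definition of supremum, irrespective of attainment or finiteness); after passing to a compactness subsequence $\eta_{j_l}\to\eta_*\in K$ and invoking upper semicontinuity of $f$ at $(\eta_*,z_0)$ one obtains $f_K(z_0)\ge f(\eta_*,z_0)\ge t$, and letting $t\uparrow\limsup_j f_K(z_j)$ finishes. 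This single-threshold device dispatches both edge cases you flag at once and avoids the $\min\{f_K(z_j)-\varepsilon_j,\,j\}$ construction; the substance of the proof is otherwise identical to yours.
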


\begin{proof}[Proof of \cref{thm:strongZ}] 
    The result follows from \cref{thm:lachout-modified}, with $\Ee = \Zz$, $P = P_Z$ and $\Pemp = n^{-1} \sum_{i=1}^n \delta_{Z_{n,i}}$. We only need to check Conditions~\ref{L-identifiable}--\ref{L-convergence-eta0}.
    \begin{itemize}
        \item Assumption~\ref{L-identifiable} is part of \cref{ass:trueZ}.
        \item Assumption~\ref{L-semicontinuous} follows from \cref{ass:critZ}, since slicing an upper semicontinuous function on a product space to a single variable produces an upper semicontinuous function again.
        \item Finiteness of the integral in Assumption~\ref{L-bounded-integrals-convergence-suprema} is a consequence of the first part of \cref{ass:stochdomZ}(ii). The inequality involving the limsup in Assumption~\ref{L-bounded-integrals-convergence-suprema} follows from \cref{lem:portmanteauuscZ} applied to $m_{\clB(\eta,\rho)}$: the latter function is upper semicontinuous by \cref{ass:critZ} and \cref{lem:usc}, and its positive part is $L^2$-stochastically dominated by \cref{ass:stochdomZ}(ii).
        \item Assumption~\ref{L-convergence-eta0} is a consequence of \cref{lem:PniLLNZ} applied to $f = m_{\eta_0}$ in combination with \cref{ass:trueZ} and \cref{ass:stochdomZ}(i).
        \qedhere
    \end{itemize}
\end{proof}

\begin{proposition}[Weak law of large numbers for triangular arrays]
    \label{prop:WLLNtri}
    For some $n_0 \in \nat$, consider a triangular array $\cbr{X_{ni} : n \ge n_0, i \in [n]}$ of rowwise independent random variables. If the family is uniformly integrable, see \cref{eq:def:uniformly-integrable}, then
    \[
        \frac{1}{n} \sum_{i=1}^n \rbr{X_{ni} - \expec[X_{ni}]}
        \to 0 \qquad \text{in probability, as $n \to \infty$.}
    \]
\end{proposition}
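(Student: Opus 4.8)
The plan is to run the classical truncation argument, the only subtlety being that the truncation level must be picked uniformly over the whole array, which is exactly what uniform integrability delivers. Fix $\eps > 0$. By \eqref{eq:def:uniformly-integrable} choose $c > 0$ with $\sup_{n \ge n_0,\, i \in [n]} \expec\bigl[|X_{ni}|\,\1_{\{|X_{ni}|>c\}}\bigr] \le \eps$, and decompose $X_{ni} = X_{ni}' + X_{ni}''$ with $X_{ni}' = X_{ni}\1_{\{|X_{ni}|\le c\}}$ and $X_{ni}'' = X_{ni}\1_{\{|X_{ni}|>c\}}$. Accordingly split the centred average of interest as $n^{-1}\sum_{i=1}^n(X_{ni}-\expec[X_{ni}]) = S_n' + S_n''$, where $S_n' = n^{-1}\sum_{i=1}^n(X_{ni}'-\expec[X_{ni}'])$ and $S_n'' = n^{-1}\sum_{i=1}^n(X_{ni}''-\expec[X_{ni}''])$.

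For the bounded part, rowwise independence and $|X_{ni}'|\le c$ give $\expec[(S_n')^2] = n^{-2}\sum_{i=1}^n \operatorname{Var}(X_{ni}') \le c^2/n \to 0$, so Chebyshev's inequality yields $S_n' \to 0$ in probability. For the unbounded part, a crude $L^1$ bound suffices: $\expec|S_n''| \le n^{-1}\sum_{i=1}^n\bigl(\expec|X_{ni}''| + |\expec X_{ni}''|\bigr) \le 2\sup_{n\ge n_0,\, i\in[n]}\expec[|X_{ni}|\,\1_{\{|X_{ni}|>c\}}] \le 2\eps$, uniformly in $n$, whence by Markov's inequality $\pr[|S_n''|>\delta] \le 2\eps/\delta$ for every $\delta>0$ and all $n \ge n_0$.

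Combining the two estimates, for any $\delta>0$ we get $\limsup_{n\to\infty}\pr\bigl[|S_n'+S_n''|>\delta\bigr] \le \limsup_{n\to\infty}\pr[|S_n'|>\delta/2] + \limsup_{n\to\infty}\pr[|S_n''|>\delta/2] \le 0 + 4\eps/\delta$, and letting $\eps\downarrow 0$ gives the claim. I do not anticipate a genuine obstacle here; the argument is elementary and self-contained, the only point to be careful about being that the constant $c$ must be chosen uniformly across all rows and columns of the array before centring, so that the $L^1$ control of $S_n''$ does not degrade with $n$.
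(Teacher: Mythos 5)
Your proof is correct, but it is a genuinely different route from the paper's: the paper does not give an argument at all, instead deferring to the main theorem of \citet{gut1992weak}, which establishes the weak law for triangular arrays under the weaker hypothesis of \emph{Ces\`aro} uniform integrability (i.e., $\lim_{c\to\infty}\limsup_{n} n^{-1}\sum_{i=1}^n \expec[|X_{ni}|\1_{\{|X_{ni}|>c\}}]=0$), and notes that this is strictly more general than the uniform-integrability assumption of the proposition. Your self-contained truncation argument is elementary and perfectly adequate for the statement as written: the uniform choice of truncation level $c$ from \eqref{eq:def:uniformly-integrable}, the Chebyshev bound $\expec[(S_n')^2]\le c^2/n$ for the bounded part, and the uniform $L^1$ bound $\expec|S_n''|\le 2\eps$ for the tail part are all correct, and the final $\limsup$ estimate closes the loop. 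What the paper's citation buys is the sharper Ces\`aro hypothesis (had the authors wanted to weaken \cref{ass:uiZ}, they could); what your proof buys is that a reader need not chase the reference, and it makes transparent exactly where rowwise independence (for the second-moment bound) and the uniformity in $(n,i)$ of the UI condition enter. If you wanted your argument to match Gut's generality, you would replace the supremum over $(n,i)$ by the Ces\`aro average over $i$ in the tail estimate, which still gives $\expec|S_n''|\le 2\eps$ for $n$ large; no other step changes.
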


\cref{prop:WLLNtri} is a direct consequence of the main theorem in \cite{gut1992weak}; uniform integrability in the Cesàro sense is already sufficient. According to \citet[p.~215]{chandra1992cesaro}, uniform integrability is \emph{not} enough to conclude the \emph{strong} law of large numbers. 

\begin{lemma}
    \label{lem:portmanteauuscZ-P}
    Under \cref{ass:dgpZ}, if $f : \Zz \to \reals$ is upper semicontinuous and quasi-integrable with respect to $P_Z$, and if the family $\cbr{f(Z_{n,i})^+ : n \ge n_0, i \in [n]}$ is uniformly integrable, then $\Pemp f \le P_Z f + \oh_{\pr}(1)$ as $n \to \infty$.
\end{lemma}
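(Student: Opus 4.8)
The plan is to mirror the proof of \cref{lem:portmanteauuscZ}, replacing the strong-convergence ingredients by their weak-convergence analogues. Write $f = g_1 + g_2$ with $g_1 = f^+ \ge 0$ and $g_2 = \min(f,0) \le 0$, both upper semicontinuous. Since $\Pemp f = \Pemp g_1 + \Pemp g_2$ and $P_Z f = P_Z g_1 + P_Z g_2$ (the latter well-defined because $f$ is quasi-integrable), it suffices to show $\Pemp g_j \le P_Z g_j + \oh_{\pr}(1)$ for $j=1,2$ separately. If $P_Z f = \infty$, then $P_Z g_1 = \infty$ and the bound for $j=1$ is trivial, so assume $P_Z f < \infty$, hence $P_Z g_1 < \infty$.

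For $j = 2$: here $g_2$ is upper semicontinuous and bounded above by $0$. By \cref{cor:VaradarajanZ}, $\Pemp \wto P_Z$ almost surely, so along almost every sample path $\limsup_n \Pemp g_2 \le P_Z g_2$ by the Portmanteau theorem for bounded-above upper semicontinuous functions (item~(v) in Theorem~1.3.4 of \citet{vaaWel23}); this gives the required $\oh$-bound, hence also an $\oh_{\pr}$-bound.

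For $j = 1$: split $\Pemp g_1 - P_n g_1$ and $P_n g_1 - P_Z g_1$, where $P_n = n^{-1}\sum_{i=1}^n P_{n,i}$. The centering term $\Pemp g_1 - \expec[\Pemp g_1] = \Pemp g_1 - P_n g_1$ is a row average of rowwise independent, centered variables. The array $\{g_1(Z_{n,i}) : n \ge n_0, i \in [n]\}$ is uniformly integrable by hypothesis, and since the centering constants $P_{n,i} g_1 = \expec[g_1(Z_{n,i})]$ are uniformly bounded (uniform integrability implies $\sup_{n,i}\expec[g_1(Z_{n,i})] < \infty$), the centered array $\{g_1(Z_{n,i}) - P_{n,i}g_1 : n \ge n_0, i \in [n]\}$ is uniformly integrable as well. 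By \cref{prop:WLLNtri}, $\Pemp g_1 - P_n g_1 \to 0$ in probability. It remains to handle $\limsup_n P_n g_1 \le P_Z g_1$: this is a deterministic statement. Uniform integrability of $\{g_1(Z_{n,i})\}$ gives $\lim_{k\to\infty}\sup_{n\ge n_0} P_n[g_1 \1_{\{g_1 > k\}}] = 0$, and for each fixed $k$, $\min(g_1,k)$ is bounded upper semicontinuous, so $\limsup_n P_n[\min(g_1,k)] \le P_Z[\min(g_1,k)] \le P_Z g_1$ by Portmanteau (item~(v) in Theorem~1.3.4 of \citet{vaaWel23}) applied to $P_n \wto P_Z$ from \cref{ass:dgpZ}. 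Combining, $\limsup_n P_n g_1 \le P_Z g_1 + \sup_{n\ge n_0}P_n[g_1\1_{\{g_1>k\}}]$ for every $k$, and letting $k\to\infty$ yields the claim. Adding the two pieces gives $\Pemp g_1 \le P_Z g_1 + \oh_{\pr}(1)$.

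The main obstacle — modest here — is the bookkeeping at the interface of the two modes of convergence: the $j=2$ part is naturally almost-sure (hence also in probability) while the $j=1$ part genuinely needs the weak law \cref{prop:WLLNtri}, and one should be careful that uniform integrability of the uncentered array transfers to the centered array (which needs only the uniform boundedness of the means, itself a consequence of uniform integrability). The deterministic $\limsup_n P_n g_1 \le P_Z g_1$ step is verbatim the corresponding step in the proof of \cref{lem:portmanteauuscZ}, now only requiring uniform integrability under $P_n$ rather than $L^2$-stochastic dominance, and this is exactly what is furnished by the hypothesis.
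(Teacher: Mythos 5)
Your proof is correct and follows essentially the same route as the paper's: decompose $f = g_1 + g_2$ with $g_1 = f^+$ and $g_2 = \min(f,0)$, handle $g_2$ via Varadarajan plus the Portmanteau bound for bounded-above upper semicontinuous functions, handle $g_1$ via \cref{prop:WLLNtri} plus the deterministic truncation argument $\limsup_n P_n g_1 \le P_Z g_1$. One small superfluity: \cref{prop:WLLNtri}, as stated, assumes uniform integrability of the uncentered array $\{X_{ni}\}$ and already delivers convergence of the centered averages, so your additional verification that the centered array $\{g_1(Z_{n,i}) - P_{n,i}g_1\}$ is uniformly integrable is unnecessary (though harmless).
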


\begin{proof}[Proof of \cref{lem:portmanteauuscZ-P}] 
    If $P_Z f = \infty$, there is nothing to prove, so assume $P_Z f < \infty$.
    As in the proof of \cref{lem:portmanteauuscZ}, we apply the same decomposition $f = g_1 + g_2$ with $g_1 = f^+ \ge 0$ and $g_2 = \min(f, 0) \le 0$. It is sufficient to show that $\Pemp g_j \le P_Z g_j + \oh_{\pr}(1)$ for $j \in \cbr{1,2}$.
    
    For $j = 2$, the desired inequality is even true almost surely, by the same argument as in the proof of \cref{lem:portmanteauuscZ}, thanks to the Pormanteau Theorem~1.3.4 in \cite{vaaWel23}: we have $\Pemp \wto P_Z$ almost surely by \cref{cor:VaradarajanZ} (which only relies on \cref{ass:dgpZ}), while $g_2$ is upper semicontinuous and bounded above (by $0$).

    For $j = 1$, we apply \cref{prop:WLLNtri} to the uniformly integrable triangular array $X_{ni} = g_1(Z_{n,i})$ for $n \ge n_0$ and $i \in [n]$: we have
    \begin{align*}
        \Pemp g_1 
        &= \frac{1}{n} \sum_{i=1}^n g_1(Z_{n,i}) 
        = \frac{1}{n} \sum_{i=1}^n X_{ni}, \\
        \frac{1}{n} \sum_{i=1}^n \expec[X_{ni}]
        &= \frac{1}{n} \sum_{i=1}^n P_{n,i} g_1
        = P_n g_1.
    \end{align*}
    As the difference between the two quantities converges to zero in probability thanks to \cref{prop:WLLNtri}, it remains to show that $\limsup_{n\to\infty}P_n g_1 \le P_Z g_1$. But this follows from the fact that $P_n \wto P$ (\cref{ass:dgpZ}) and the upper semicontinuity of $g_1$, by an application of the Portmanteau Theorem~1.3.4 in \cite{vaaWel23}.
\end{proof}

\begin{proof}[Proof of \cref{thm:weakZ}]
    This time, we apply \cref{thm:lachout-modified:weak}. 

    Assumptions~\ref{L-identifiable} and~\ref{L-semicontinuous} are verified by the same arguments as in the proof of \cref{thm:strongZ}.

    The integrability condition in Assumption~\ref{L-convergence-suprema-P} is a consequence of the first part of \cref{ass:stochdomZ}(ii). The inequality involving the convergence in probability in Assumption~\ref{L-convergence-suprema-P} follows from \cref{lem:portmanteauuscZ-P} applied to $f = m_{\clB(\eta,\delta)}$: this function is upper semicontinuous by \cref{ass:critZ} and \cref{lem:usc}, and its positive part is uniformly integrable with respect to the distributions $P_{n,i}$ by \cref{ass:uiZ}(ii).
    
    Assumption~\ref{L-convergence-eta0-P} is a consequence of \cref{prop:WLLNtri} applied to $X_{ni} = m_{\eta_0}(Z_{n,i})$, which are uniformly integrable by \cref{ass:uiZ}(i): we have
    \[
        M_n(\eta_0) 
        = \frac{1}{n} \sum_{i=1}^n m_{\eta_0}(Z_{n,i}) 
        = \frac{1}{n} \sum_{i=1}^n X_{n,i}
    \]
    with expectation
    \[
        \frac{1}{n} \sum_{i=1}^n \expec[X_{n,i}]
        = \frac{1}{n} \sum_{i=1}^n P_{n,i}m_{\eta_0}
        = P_n m_{\eta_0}.
    \]
    By Theorem~2.20 in \citet{van1998asymptotic}, $P_n m_{\eta_0}$ converges to $M(\eta_0) = P_Z m_{\eta_0}$ since $P_n \wto P_Z$ (\cref{ass:dgpZ}), $m_{\eta_0}$ is continuous $P_Z$-almost everywhere (\cref{ass:trueZ}) and $(m_{\eta_0})_{\#} P_n$ is uniformly integrable (as a consequence of \cref{ass:uiZ}(i) and the identity $P_n = n^{-1} \sum_{k=1}^n P_{n,k}$).
\end{proof}

\subsection{Proofs for Section~\ref{sec:consistency-covariates}}
\label{sec:consistency-covariates-proofs}

\begin{proof}[Proof of \cref{lem:from-conditional-to-triangular}] 
    We only need to show that $P_n \wto P_{X,Y}$, where $P_n=n^{-1} \sum_{i=1}^n P_{n,i}$. By \cref{lem:vara_sep_metr_wconv}, it is sufficient to show that $P_n f \to P_{X, Y} f$ as $n\to\infty$, for every bounded and uniformly continuous function $f : \XY \to \reals$. For that purpose, let 
    \begin{align*}
        \mu_n &= \frac{1}{n} \sum_{i=1}^n \delta_{x_{n,i}}, &
         h_n(x) &= \int_\Yy f(x,y) \,Q_n(x, \diff y), & 
        h(x) &= \int_\Yy f(x,y) \,Q(x, \diff y),
    \end{align*}
    so that $P_n f = \mu_n h_n$ and $P_{X,Y} f = P_X h$.
    The proof will be finished once we show $\mu_n h_n \to P_X h$ as $n\to\infty$. Note that, by \cref{ass:dgpZ-conditional}(a), we have $\mu_n \wto P_X$ as $n\to\infty$.

    First, we have that $h_n - h$ converges continuously to zero. Indeed let $x_n \to \bar x \in \Xx$ and $\varepsilon > 0$. By uniform continuity of $f$ and \cref{ass:dgpZ-conditional}(b), we may choose $n_0 \in \nat$ such that for all $n \geq n_0$ we have $\|f(x_n, \point) - f(\bar x, \point)\|_\infty < \varepsilon / 4$ and  $|\{Q_n(x_n, \point) - Q(\bar x, \point)\}[f(\bar x, \point)]| \vee |\{Q(x_n, \point) - Q(\bar x, \point)\}[f(\bar x, \point)]| < \varepsilon/4$. Then, for all $n \geq n_0$,
    \begin{align*}
        \lefteqn{ |h_n(x_n) - h(x_n)| } \\
        &\le \int_\Yy |f(x_n, y) - f(\bar x, y)|\, Q_n(x_n, \diff y) 
            + \int_\Yy |f(x_n, y) - f(\bar x, y)| \, Q(x_n, \diff y)\\
        &\hspace{1cm}
        + \Big | \int_\Yy f(\bar x, y) \, \{Q_n(x_n, \diff y) - Q(\bar x, \diff y)\} \Big |
        + \Big | \int_\Yy f(\bar x, y) \, \{Q(x_n, \diff y) - Q(\bar x, \diff y)\} \Big |, 
    \end{align*}
    which is strictly smaller than $\varepsilon$.
    By the extended continuous mapping theorem [Theorem~1.11.1 in \citet{vaaWel23}], this gives $h_n(Z_n) - h(Z_n) \wto 0$ for $Z_n \sim \mu_n$, and since $|h_n| \vee |h| \leq |f|$ and since $f$ is bounded, the moments also converge to zero, that is, $\mu_n[h_n - h] \to 0$. Thus, we are left with verifying $\mu_n h \to P_X h$. By \cref{ass:dgpZ-conditional}(a) this reduces to checking that $h$ is bounded and continuous. The first part was already verified and the continuity follows by similar but simpler arguments as in the last display.
\end{proof}

\subsection{Proofs for Section~\ref{sec:applications}}
\label{sec:applications-proofs}

\begin{proof}[Proof of \cref{thm:scoring:strong}]
    We apply \cref{cor:conditional:strong} and check Assumptions~\ref{ass:critZ}, \ref{ass:trueZ}, \ref{ass:stochdomZ} and \ref{ass:dgpZ-conditional}, starting with the latter.
    
    Regarding \cref{ass:dgpZ-conditional}, the Markov kernel is $Q_n(x, \diff y) = P_{\bth(\eta_0, x)}(\diff y)$. This kernel does not depend on $n$ and is weakly continuous as a function of $x$ by \cref{ass:paramodel}.
    
    Regarding \cref{ass:critZ}, the compactness of $H$ was assumed from the start. By \cref{ass:scoring}, the objective function $(\eta, x, y) \mapsto m_\eta(x,y) = S(P_{\bth(x,\eta)},y)$ attains values in $[-\infty, \infty)$. Furthermore, it is upper semicontinuous as the link function $\bth$ is continuous and the map $(\theta,y) \mapsto S(P_{\theta},y)$ is upper semicontinuous by Assumptions~\ref{ass:paramodel} and~\ref{ass:scoring}, respectively.
    Finally, since $\{S(P_\theta,Y) : \theta \in \Theta, Y \sim P_\theta\}$ is $L^2$-stochastically dominated\footnote{Or uniformly integrable in the case of weak consistency.} by Assumption~\ref{ass:scoring-dominance}, we have $\sup_{\theta\in\Theta} |S(P_\theta,P_\theta)| < \infty$, which in turn implies that $m_\eta$ is $P_{X,Y}$-integrable, with $P_{X,Y}$ in \cref{eq:PXYeta0}, as $S(P_{\bth(x,\eta)},P_{\bth(x,\eta_0)}) \le S(P_{\bth(x,\eta_0)},P_{\bth(x,\eta_0)})$.
    
    Regarding \cref{ass:trueZ}, the continuity of $m_{\eta_0}$ is stipulated in \cref{ass:scoring}. As argued in the proof of \cref{ass:critZ}, $m_\eta$ is $P_{X,Y}$-integrable, which implies that $M(\eta_0) = \int_{\Xx} S(P_{\bth(x,\eta_0)},P_{\bth(x,\eta_0)}) \, P_X(\diff x) < \infty$. Moreover, since $S$ is a strictly proper scoring rule, the identifiability of the parametric model in \cref{ass:paramodel} implies that 
    \begin{multline*}
        P_X \rbr{\cbr{x \in \Xx : 
            S(P_{\bth(x,\eta)}, P_{\bth(x,\eta_0)}) 
            < S(P_{\bth(x,\eta_0)}, P_{\bth(x,\eta_0)})
        }} \\
        = P_X \rbr{\cbr{x \in \Xx : \bth(x,\eta) \ne \bth(x, \eta_0)}}
        > 0,
    \end{multline*}
    from which 
    \begin{align*}
        M(\eta) 
        &= \int_{\Xx} S(P_{\bth(x,\eta)},P_{\bth(x,\eta_0)}) \, P_X(\diff x) \\
        &< \int_{\Xx} S(P_{\bth(x,\eta_0)},P_{\bth(x,\eta_0)}) \, P_X(\diff x)
        =M(\eta_0).
    \end{align*}   
    
    Finally, \cref{ass:stochdomZ} follows from \cref{ass:scoring-dominance} upon setting $Z_{n,i}=(x_{n,i},Y_{n,i})$:
    \begin{itemize}[itemsep=0pt, leftmargin=0pt, itemindent=30pt]
    \item[(i)] 
    Since $Y_{n,i} \sim P_{\bth(x_{n,i}, \eta_0)}$, the family $\{m_{\eta_0}(Z_{n,i}) = S(P_{\bth(x_{n,i}, \eta_0)}, Y_{n,i})\}_{n \ge 1, i \in [n]}$ is contained in the family $\cbr{S(P_\theta, Y) : \theta \in \Theta, Y \sim P_\theta}$. The $L^2$-stochastic dominance of the latter in \cref{ass:scoring-dominance}(i) then implies the same property for the former, yielding \cref{ass:stochdomZ}(i).
    \item[(ii)] 
    Let $\eta \in H \setminus \cbr{\eta_0}$ and let $\delta > 0$ be such that the requirement in \cref{ass:scoring-dominance}(ii) holds. Let $\mathcal{S}_{\eta,\delta}$ be the $L^2$-stochastically dominated family in \cref{eq:Setadeltafamily}. Let $W$ be a nonnegative, square-integrable random variable whose survival function dominates that of all random variables in $\mathcal{S}_{\eta,\delta}$. For all $x \in \Xx$, we have
    \[
        \int_{\Yy} 
            \sup_{\bar{\eta} \in \clB(\eta,\delta)} 
            \bigl(S(P_{\bth(x,\bar{\eta})}, y)\bigr)^+ \, 
        P_{\bth(x,\eta_0)}(\diff y)
        \le \expec[W] < \infty,
    \]
    and thus, by definition of $P_Z = P_{X,Y}$ in \cref{eq:PXYeta0},
    \[
        P_Z \bigl(m_{\clB(\eta,\delta)}\bigr)^+
        =
        \int_{\Xx} \int_{\Yy} 
            \sup_{\bar{\eta} \in \clB(\eta,\delta)} 
            \bigl(S(P_{\bth(x,\bar{\eta})}, y)\bigr)^+ \, 
        P_{\bth(x,\eta_0)}(\diff y) \, P_X(\diff x)
        < \infty.
    \]
    It follows that $m_{\clB(\eta,\delta)}$ is $P_Z$-quasi-integrable and that $P_Z m_{\clB(\eta,\delta)} < +\infty$. Further, since $Y_{n,i} \sim P_{\bth(x_{n,i}, \eta_0)}$, the family 
    \[ 
        \Big\{
            \rbr{m_{\clB(\eta,\delta)}(Z_{n,i})}^+ = 
            \sup_{\bar{\eta} \in \clB(\eta,\delta)} 
                \rbr{S(P_{\bth(x_{n,i},\bar{\eta})}, Y_{n,i})}^+ : 
            n \ge 1, i \in [n] 
        \Big\} 
    \]
    is contained in $\mathcal{S}_{\eta,\delta}$ and is therefore $L^2$-stochastically dominated too.
    \qedhere
    \end{itemize}
\end{proof}

\begin{proof}[Proof of \cref{ex:energy-score}]
    We apply \cref{thm:scoring:strong} and need to check Assumptions~\ref{ass:scoring} and~\ref{ass:scoring-dominance}. Regarding \cref{ass:scoring} recall that, as noted above, $\ES$ is a strictly proper scoring rule on $\Pc_\beta(\Rd)$. Next, writing $m := \sup_{\theta \in \Theta} \int_{\Rd} \|y\|^\beta \, P_\theta(\diff y)$, which is finite by \cref{eq:energy-moments}, we have
    \begin{align*}
        \ES(P_\theta, y)
        &\le \frac{1}{2} \expec_{P_\theta \otimes P_\theta}[\|Y-Y'\|^\beta] 
        \le 2^{\beta-1} \expec_{P_\theta}[\|Y\|^\beta]
        = 2^{\beta-1} m
    \end{align*}
        for all $\theta \in \Theta$ and $y \in \Rd$. As a consequence, $x \mapsto \ES(P_{\bth(x,\eta)}, P_{\bth(x,\eta_0)})$ is quasi-integrable with respect to $P_X$, for every $\eta \in H$. Moreover, the map $(\theta, y) \mapsto \ES(P_\theta, y)$ is continuous by the uniform integrability of $y \mapsto \nbr{y}^\theta$ with respect to $(P_\theta : \theta \in \Theta)$. Since $(x, \eta) \mapsto \bth(x, \eta)$ is continuous, the map $(x, y) \mapsto \ES(P_{\bth(x,\eta_0)},y)$ is then continuous too. We have hence shown \cref{ass:scoring}.
Finally, \cref{ass:scoring-dominance} follows from the Markov inequality and the assumption in \cref{eq:energy-moments}.
\end{proof}

\begin{proof}[Proof of \cref{cor:cmle:strong}]
    We apply \cref{thm:scoring:strong}, and need to check Assumptions~\ref{ass:paramodel}, \ref{ass:scoring}, and \ref{ass:scoring-dominance} for $S(P,y)=\LogS(P, y)$ from \cref{eq:LogS}.
    \cref{ass:paramodel} is part of the conditions in \cref{ass:conditional-mle}. Next, regarding \cref{ass:scoring}, the fact that $S$ is strictly proper follows from \citet[Section~4.1]{gneiting2007strictly}. We have $S(P_\theta,y) = \log p_\theta(y) < \infty$ by \cref{ass:conditional-mle}(i), and the two continuity requirements follow from (i) and (ii).
    Finally, \cref{ass:scoring-dominance}(i) and (ii) exactly correspond to \cref{ass:conditional-mle}(iii) and (iv), respectively. 
\end{proof}

\begin{proof}[Proof of \cref{ex:cmle_gev}]
    The map $(\theta,y) \mapsto p_\theta(y) \in [0, \infty)$ is continuous upon imposing the restriction $\xi > -1$, i.e., on the domain $\rbr{\reals \times (0, \infty) \times (-1,\infty)} \times \reals$; a key point is that $-1-1/\xi > 0$ for $-1 < \xi < 0$. 
    Hence, the same is true for $(\eta, x, y) \mapsto m_\eta(x,y) = \log p_{\bth(x, \eta)}(y) \in [-\infty, \infty)$, by the continuity of the link function $\bth$ and the assumption on its image $\Theta$. This settles the continuity requirements in \cref{ass:conditional-mle}.
    
    As $\bth$ is continuous and $\Xx$ and $H$ are compact, the image $\Theta$ is compact too. Let $\Theta_j$ be the projection of $\Theta$ onto its $j$th coordinate ($j = 1,2,3$). We have
    \begin{align}
        \nonumber
        \sup_{\eta \in H, x \in \Xx, y \in \reals} p_{\bth(x,\eta)}(y)
        = \sup_{\theta \in \Theta, y \in \reals} p_{\theta}(y) 
        &= \sup_{\theta \in \Theta, y \in \reals} \frac{1}{\sigma} p_{0,1,\xi}\rbr{\frac{y-\mu}{\sigma}} \\
        \label{eq:GEVboundedabove}
        &\le \sup_{\sigma \in \Theta_2, \xi \in \Theta_3, z \in \reals} 
        \frac{1}{\sigma} p_{0,1,\xi}(z)
        < \infty,
    \end{align}
     where we applied Proposition~1 in \cite{Dom15} in the last step, upon noting that $\Theta_2$ and $\Theta_3$ are compact subsets of $(0, \infty)$ and $(-1,\infty)$, respectively. We find that $m_\eta(x,y) = S(P_{\bth(x,\eta)},y) = \log p_{\bth(x,\eta)}(y)$ is bounded above uniformly in $(\eta,x,y)$. This deals with the $L^2$-stochastic dominance of the positive part in \cref{ass:conditional-mle}(iv).

    Regarding the $L^2$-stochastic dominance in \cref{ass:conditional-mle}(iii), it is sufficient to treat the negative part $(- \ell_\theta(y))^+$ of the loglikelihood and show that $\cbr{(- \ell_\theta(Y))^+ : \theta \in \Theta, Y \sim p_\theta}$ is $L^2$-stochastically dominated, since the positive part of the loglikelihood is already uniformly bounded above by \cref{eq:GEVboundedabove}. Let $\theta \in \Theta$ and $Y \sim p_\theta$. Write $Z = (Y - \mu)/\sigma$, with density $p_{0,1,\xi}$. Since $1 + \xi Z > 0$ a.s., it is sufficient to control $(- \ell_\theta(y))^+$ for $(\theta,y)$ such that $1 + \xi z > 0$ with $z = (y-\mu)/\sigma$, a restriction on $y$ (or $z$) which we will assume from now on without further mentioning. Write
    \[
        u(z) = u_\xi(z) = 
        \exp\rbr{ - \int_0^z \frac{1}{1 + \xi t} \diff t } =
        \begin{dcases}
            (1 + \xi z)^{-1/\xi} & \text{if $\xi \ne 0$,} \\
            e^{-z} & \text{if $\xi = 0$,}
        \end{dcases}
    \]
    with values in $(0, \infty)$. As $\xi > -1$ for all $\theta \in \Theta$, we have
    \begin{align*}
        \rbr{- \ell_\theta(y)}^+
        &= \rbr{\log(\sigma) + u(z) - (\xi + 1) \log(u(z))}^+ \\
        &\le \rbr{\log(\sigma)}^+ + u(z) + (\xi + 1) \rbr{-\log(u(z))}^+.
    \end{align*}
    Let $C_2 = C_2(\Theta) = \max_{\sigma \in \Theta_2} (\log\sigma)^+ \in [0, \infty)$ and $C_3 = C_3(\Theta) = \max_{\xi \in \Theta_3} (\xi +1) \in (0, \infty)$, with $\Theta_j$ as above. It follows that
    \[
        \rbr{- \ell_\theta(y)}^+
        \le C_2 + u(z) + C_3 \rbr{-\log(u(z))}^+.
    \]
    The cdf of $p_\theta$ is $F_\theta(y) = e^{-u(z)}$. Since $u(Z) = - \log F_\theta(Y)$ has a unit-exponential distribution, the family $\cbr{(- \ell_\theta(Y))^+ : \theta \in \Theta, Y \sim p_\theta}$ is $L^2$-stochastically dominated, as required.
\end{proof}

\begin{proof}[Proof of \cref{ex:cmle_gpd}]
As for the GEV distribution, the restriction of the shape parameter to $\xi > -1$ ensures the continuity of the map $(\theta, y) \mapsto p_\theta(y)$ on $(-1,\infty) \times (0, \infty)$, which implies the continuity requirements in \cref{ass:conditional-mle}. The same restriction also implies the bound $p_\theta(y) \le a^{-1}$ (note that if $-1 < \xi < 0$, then $-1-1/\xi > 0$), so that on the compact subset $\Theta = \bth(\Xx \times H)$ of the parameter space, the GP density is uniformly bounded above. As for the GEV distribution, this implies the $L^2$-stochastic dominance required in \cref{ass:conditional-mle}(iv).

    To show the $L^2$-stochastic dominance in \cref{ass:conditional-mle}(iii), note that for $y > 0$ such that $1 + \xi y / a > 0$, we have
    \[
        \ell_\theta(y) = \log p_\theta(y) = - \log a + (\xi + 1) \log(1 - F_\theta(y))
    \]
    where $F_\theta$ is the cdf of $p_\theta$, given by
    \[
        F_{\theta}(y) 
        = \int_0^y p_\theta(t) \, \diff t
        = \begin{cases}
            1 - \rbr{ 1 + \xi y/a }^{-1/\xi}, & \text{if $\xi \neq 0$},\\
            1 - \exp\rbr{ - y/a }, & \text{if $\xi = 0$}.
        \end{cases}
    \]
    If $Y \sim p_\theta$, the distribution of $-\log(1 - F_\theta(Y))$ is unit-exponential. The $L^2$-stochastic dominance of $\cbr{\ell_\theta(Y) : \theta \in \Theta, Y \sim p_\theta}$ then follows from the fact that $\Theta$ is a compact subset of $(0,\infty) \times (-1,\infty)$.
\end{proof}

\begin{proof}[Proof of \cref{thm:Frechet:strong}]
    We apply \cref{cor:conditional:strong} to $Y_{n,i} = (M_{n,i} \vee 1) / a_{r_n}$ and $\Yy = (0, \infty)$. The reason to take the maximum with $1$ has to do with the $L^2$-stochastic dominance condition, see below. We have 
    \[ 
        \pr\rbr{Y_{n,i} \le y} 
        = \pr\rbr{M_{n,i} \vee 1 \le a_{r_n} y} 
        = \begin{dcases}
            F_{x_{n,i}}^{r_n}(a_{r_n}y), & \text{if $a_{r_n} y \ge 1$,} \\
            0, & \text{otherwise}.
        \end{dcases}
    \]
    The sequence of Markov kernels is thus given by
    \begin{equation} 
        \label{eq:markov-kernel-frechet}
        Q_n\rbr{x, (0, y)}
        = F_x^{r_n}\rbr{a_{r_n}y} \1_{[1/a_{r_n},\infty)}(y),
        \qquad y \in (0, \infty),
    \end{equation}
    and the limit kernel by 
    \[
        Q\rbr{x, (0, y)}
        = \Phi_{\alpha_0}(y/\sigma_{\beta_0}(x)).
    \]
    The latter is continuous as a function of $x$, by continuity of the map $(x, \beta) \mapsto \sigma_\beta(x)$, while the continuous convergence of $Q_n$ to $Q$ follows from \cref{eq:frechet-doa-uniform}: for $y \in (0, \infty)$ and every sequence $x_n \to x$ in $\Xx$, we have
    \begin{align*}
        &\phantom{{}={}} \big| Q_n\rbr{x_n, (0, y)} - Q\rbr{x, (0, y)} \big|  \\
        &\le \big| F_{x_n}^{r_n}\rbr{a_{r_n}y} \1_{[1/a_{r_n},\infty)}(y) - \Phi_{\alpha_0}(y/\sigma_{\beta_0}(x_n)) \big|
        + \big| \Phi_{\alpha_0}(y/\sigma_{\beta_0}(x_n)) - \Phi_{\alpha_0}(y/\sigma_{\beta_0}(x)) \big|,
    \end{align*}
    and both terms on the right-hand side converge to zero, the former by the uniformity in $x\in\Xx$ in \cref{eq:frechet-doa-uniform} and the fact that $a_r \to \infty$ as $r\to\infty$.

    We abbreviate $r = r_n$ and write $\ell_{\tau, \alpha} = \log p_{\tau, \alpha}$. We introduce the scaling sequence $a_r$ into the objective function in \cref{eq:Fr:ll}, writing it as
    \[
        L_n(\alpha, \beta, \tau)
        = \frac{1}{n} \sum_{i=1}^n \ell_{\tau \sigma_\beta(x_{n,i}), \alpha}(M_{n,i})
        = \frac{1}{n} \sum_{i=1}^n \ell_{(\tau/a_r) \sigma_\beta(x_{n,i}), \alpha}(M_{n,i}/a_{r}) - \log a_r.
    \]
    Define $H := A \times B \times [\gamma_-, \gamma_+]$ and, for $(\alpha, \beta, \gamma) \in H$,
    \[
        \tilde{L}_n(\alpha, \beta, \gamma)
        := \frac{1}{n} \sum_{i=1}^n 
        \ell_{\gamma \sigma_\beta(x_{n,i}), \alpha}(Y_{n,i}).
    \]
    Thanks to \cref{lem:frechet-omega-0-probability-1}, the event that there exists (random) $n_0 \in \nat$ such that $Y_{n,i} = M_{n,i}/a_r$ for all $n \ge n_0$ and $i \in [n]$ has probability one. Write $\tilde{\eta}_n = (\hat{\alpha}_n, \hat{\beta}_n, \hat{\gamma}_n)$ with $\hat{\gamma}_n = \hat{\tau}_n / a_r$, and note that $\tilde{\eta}_n$ takes its values in $H$. By the assumption on $\hetan$, we have, almost surely,
    \begin{align*}
        \tilde{L}_n(\tilde{\eta}_n)
        = \frac{1}{n} \sum_{i=1}^n 
        \ell_{(\hat{\tau}_n/a_r) \sigma_{\hat{\beta}_n}(x_{n,i}), \hat{\alpha}_n}(Y_{n,i}) 
        &= \frac{1}{n} \sum_{i=1}^n 
        \ell_{(\hat{\tau}_n/a_r) \sigma_{\hat{\beta}_n}(x_{n,i}), \hat{\alpha}_n}(M_{n,i}/a_r) + \oh(1) \\
        &= L_n(\hetan) + \log a_r + \oh(1) \\
        &\ge L_n(\alpha_0, \beta_0, a_r) + \log a_r + \oh(1) \\
        &= \tilde{L}_n(\alpha_0, \beta_0, 1) + \oh(1),
        \qquad n \to \infty.
    \end{align*}
    The conclusion now follows if we can apply \cref{cor:conditional:strong} with contrast function 
    \begin{align*}
        m_\eta(x,y) 
        &= \ell_{\gamma \sigma_\beta(x), \alpha}(y) \\
        &= \log \alpha - \alpha \log \gamma - \alpha \log \sigma_\beta(x) - (\alpha+1) \log y - \rbr{\frac{y}{\gamma \sigma_\beta(x)}}^{-\alpha}.
    \end{align*}
    with $\eta = (\alpha, \beta, \gamma) \in H$ and true parameter $\eta_0 = (\alpha_0, \beta_0, 1)$. 

    \cref{ass:dgpZ-conditional} has already been verified at the beginning of the proof.

    The map $(\eta, x, y) \mapsto m_\eta(x,y)$ is continuous and bounded above, so $m_\eta$ is $P_{X,Y}$-quasi-integrable, which is \cref{ass:critZ}.

    In \cref{ass:trueZ}, it only remains to show that $\eta_0 = (\alpha_0, \beta_0, 1)$ is the unique maximizer in $H$ of the limit function
    \[
        \tilde{L}(\eta) 
        = P_{X,Y} m_{\eta} 
        = \int_{\Xx} \int_{0}^{\infty} 
            m_{\eta}(x, y) \, p_{\sigma_{\beta_0}(x), \alpha_0}(y) \, \diff y \, 
        P_X(\diff x).
    \]
    But this follows from the identifiablity of the Fréchet parameter $(\sigma, \alpha)$ together with the identifiability assumption in \cref{eq:Fr:ident}: for all $x \in \Xx$, we have
    \[
        \int_{0}^{\infty} 
            m_{\eta}(x, y) \, p_{\sigma_{\beta_0}(x), \alpha_0}(y) \, \diff y
        \le
        \int_{0}^{\infty} 
            m_{\eta_0}(x, y) \, p_{\sigma_{\beta_0}(x), \alpha_0}(y) \, \diff y
    \]
    with strict inequality as soon as $\eta=(\alpha, \beta, \gamma)$ satisfies $(\gamma \sigma_\beta(x), \alpha) \ne (\sigma_{\beta_0}(x), \alpha_0)$ \citep[Lemma 5.35]{van1998asymptotic}, which happens on a set of positive $P_X$-probability. Further, it is not hard to see that $\tilde{L}(\eta_0) > -\infty$, since if $Y \sim \Phi_\alpha$, then $Y^{-\alpha}$ has a unit-exponential distribution.

    Finally, consider the $L^2$-stochastic dominance in \cref{ass:stochdomZ}. Part (ii) follows from the fact $(\eta,x,y) \mapsto m_\eta(x,y)$ is upper bounded, and part (i) follows from \cref{lem:FrechetL2}, observing that $Z_{n,i}=(x_{n,i}, Y_{n,i}) \sim Q_n(x, \diff y) \delta_{x_{n,i}}(\diff x)$ with $Q_n$ from \eqref{eq:markov-kernel-frechet}. 
\end{proof}

\begin{lemma}
\label{lem:frechet-omega-0-probability-1}
    Assume \cref{eq:F0DoA,eq:FxF0} with $\inf_{x\in\Xx} c(x) > 0$. If $\log n = o(r_n)$, then for $M_{n,i}$ in \cref{eq:Mnixi} we have $\min_{i\in[n]} M_{n,i} \to \infty$ almost surely as $n \to \infty$.
\end{lemma}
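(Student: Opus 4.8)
The plan is a first Borel--Cantelli argument. Fix a real number $K>0$; I will show that, almost surely, $\min_{i\in[n]} M_{n,i} > K$ for all large $n$, and then let $K\to\infty$ along the integers (note that all the $\xi_{n,i;t}$ are assumed to live on one probability space, which is what makes a pathwise statement meaningful). The first step is to turn the ratio control in \eqref{eq:FxF0} into a uniform upper bound for the $F_x$. Since $F_0$ lies in the max-domain of attraction of $\Phi_\alpha$, its right endpoint is $+\infty$, so $F_0(y)\in(0,1)$ and $\log F_0(y)<0$ for all sufficiently large $y$. Writing $\underline c := \inf_{x\in\Xx} c(x) > 0$, \eqref{eq:FxF0} provides $y_1$ such that for all $y\ge y_1$ and all $x\in\Xx$ we have $\log F_x(y)/\log F_0(y) > c(x) - \underline c/2 \ge \underline c/2$; multiplying through by the negative quantity $\log F_0(y)$ and exponentiating yields $F_x(y) \le F_0(y)^{\underline c/2}$ uniformly in $x\in\Xx$. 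In particular, $\sup_{x\in\Xx} F_x(K) \le F_0(K)^{\underline c/2} < 1$ for every $K\ge y_1$.

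The second step is a union bound. Because $M_{n,i} = \max_{t\in[r_n]} \xi_{n,i;t}$ with $\xi_{n,i;1},\dots,\xi_{n,i;r_n}$ i.i.d.\ from $F_{x_{n,i}}$, for $K\ge y_1$ we get $\pr(M_{n,i}\le K) = F_{x_{n,i}}^{r_n}(K) \le \exp\{-(\underline c/2)\lambda_K r_n\}$ with $\lambda_K := -\log F_0(K)>0$, and hence $\pr(\min_{i\in[n]} M_{n,i} \le K) \le n\exp\{-(\underline c/2)\lambda_K r_n\}$. The hypothesis $\log n = \oh(r_n)$ is precisely what makes this summable: since $r_n/\log n \to \infty$, for each fixed $K\ge y_1$ there is $n_0$ with $(\underline c/2)\lambda_K r_n \ge 3\log n$ for all $n\ge n_0$, so the probability is at most $n^{-2}$ eventually. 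By Borel--Cantelli, almost surely $\min_{i\in[n]} M_{n,i} > K$ for all large $n$, hence $\liminfn \min_{i\in[n]} M_{n,i} \ge K$ almost surely. Intersecting these almost-sure events over $K\in\nat$ with $K\ge y_1$ gives $\min_{i\in[n]} M_{n,i}\to\infty$ almost surely.

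I expect the only delicate point to be the uniformization in the first step: one must check that $\log F_0$ is legitimately negative (which is where the Fréchet max-domain-of-attraction assumption enters, ensuring an infinite right endpoint) and that the additive error in \eqref{eq:FxF0} can be absorbed into the constant $\underline c/2$, so that the resulting exponent $\underline c/2$ is strictly positive and the bound $F_0(K)^{\underline c/2}$ is strictly below $1$; the strict positivity of $\underline c$ is essential here. Once that uniform tail bound is in place, the remainder is the routine Borel--Cantelli estimate, with $\log n = \oh(r_n)$ playing exactly the role it plays in \citet[Theorem~2]{Dom15}.
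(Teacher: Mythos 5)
Your proof is correct and follows essentially the same route as the paper's: fix a level, use \eqref{eq:FxF0} together with the infinite right endpoint of $F_0$ to extract a uniform bound $\sup_{x}F_x(y)\le F_0(y)^{\underline c/2}<1$, apply the union bound plus $\log n = \oh(r_n)$ to get a summable tail $\le n^{-2}$, and conclude by Borel--Cantelli, intersecting over a countable set of levels. The only cosmetic difference is that the paper writes the exponent as a generic $\eps$ with $c(x)\ge 2\eps$ and works with $\bar p := \sup_x F_x(y)$, whereas you use $\underline c/2$ directly; these are the same estimate.
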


\begin{proof}[Proof of \cref{lem:frechet-omega-0-probability-1}]
    Choose $y < \infty$ arbitrarily large. It is sufficient to show that 
    the event $\{ \min_{i\in[n]} M_{n,i} \le y \text{ infinitely often} \}$  has probability zero.
    Let $\eps > 0$ be such that $c(x) \ge 2\eps$ for all $x \in \Xx$. By enlarging $y$ if necessary, we can ensure that
    \[
        \sup_{x \in \Xx} \abr{\frac{\log F_x(y)}{\log F_0(y)} - c(x)} \le \eps
    \]
    and thus
    \[
        \forall x \in \Xx: \qquad 
        F_x(y) \le F_0(y)^{c(x)-\eps} \le F_0(y)^\eps < 1,
    \]
    as the upper endpoint of $F_0$ is infinity by \cref{eq:F0DoA}. Write $\bar{p} = \sup_{x\in\Xx} F_x(y) < 1$ and let $n$ be sufficiently large so that $\log n \le \abr{\log \bar{p}} r_n/3$. Then
    \begin{align*}
        \pr \Big(\min_{i\in[n]} M_{n,i} \le y\Big)
        &\le \sum_{i=1}^n \pr\rbr{M_{n,i} \le y} \\
        &\le n \sup_{x \in \Xx} F_x^{r_n}(y) 
        = n\exp\rbr{ - r_n \abr{\log \bar{p}}} 
        \le n\exp\rbr{-3\log n} =n^{-2}.
    \end{align*}
    The upper bound is summable in $n$, so that the Borel--Cantelli permits to conclude.
\end{proof}

\begin{lemma} 
\label{lem:FrechetL2}
Under the conditions of \cref{thm:Frechet:strong}, there exists $n_0 \in \nat$ such that the family
\[
    \bigl\{ m_{\eta_0}(x,Y): Y \sim Q_n(x, \point), x \in \Xx, n \ge n_0\bigr\},
\]
with $Q_n$ as in \cref{eq:markov-kernel-frechet}, is $L^2$-stochastically dominated.
\end{lemma}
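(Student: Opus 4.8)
The plan is to prove the stronger assertion that $\sup_{x \in \Xx,\ n \ge n_0} \expec\big[\,\abr{m_{\eta_0}(x,Y)}^{3}\,\big] < \infty$ over all $Y \sim Q_n(x,\point)$; since this is a bound on a $(2+1)$-th moment, it entails the desired $L^2$-stochastic dominance by Markov's inequality. Recall from the proof of \cref{thm:Frechet:strong} that $m_{\eta_0}(x,y) = \ell_{\sigma_{\beta_0}(x),\alpha_0}(y)$ is an explicit elementary expression in $y$. As $x \mapsto \sigma_{\beta_0}(x)$ is continuous and strictly positive on the compact space $\Xx$, both $\underline\sigma := \min_{\Xx}\sigma_{\beta_0}$ and $\overline\sigma := \max_{\Xx}\sigma_{\beta_0}$ lie in $(0,\infty)$, and reading off the formula for $\ell$ yields a constant $C_\star$, depending only on $\alpha_0,\underline\sigma,\overline\sigma$, with
\[
    \abr{m_{\eta_0}(x,y)} \le C_\star + (\alpha_0+1)\,\abr{\log y} + \overline\sigma^{\alpha_0}\,y^{-\alpha_0}
    \qquad (x \in \Xx,\ y > 0).
\]
Under $Q_n(x,\point)$ the variable $Y$ equals $(M \vee 1)/a_{r_n}$ with $M = \max_{t \in [r_n]}\xi_t$ and $\xi_1,\dots,\xi_{r_n}$ i.i.d.\ from $F_x$ (the maximum with $1$ is precisely what keeps $\log Y$, and hence $m_{\eta_0}(x,Y)$, finite), so $\abr{m_{\eta_0}(x,Y)}$ is large only when $Y$ is very large (through the $\log$) or very small (through the $y^{-\alpha_0}$). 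It therefore suffices to bound the upper and lower tails of $Y$ uniformly in $x \in \Xx$ and in $n \ge n_0$.

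\emph{Upper tail.} For $s \ge 1$ a union bound gives $\pr(Y > s) = 1 - F_x^{r_n}(a_{r_n}s) \le r_n\,(1 - F_x(a_{r_n}s))$. By \eqref{eq:FxF0} the ratio $\log F_x(u)/\log F_0(u)$ tends to $c(x)$ uniformly in $x$, hence is bounded for $u$ large; together with $1 - z^a \le \max(1,a)(1-z)$ for $z \in [0,1]$ this yields $1 - F_x(u) \le C_1\,(1 - F_0(u))$ for $u$ large, uniformly in $x$. Since \eqref{eq:F0DoA} is equivalent to $1 - F_0$ being regularly varying at infinity of index $-\alpha_0$, and since $r_n\,(1 - F_0(a_{r_n})) \to 1$ (read off $F_0^{r_n}(a_{r_n}) \to \Phi_{\alpha_0}(1)$), writing $r_n(1 - F_0(a_{r_n}s)) = r_n(1 - F_0(a_{r_n}))\cdot\frac{1 - F_0(a_{r_n}s)}{1 - F_0(a_{r_n})}$ and applying Potter's bound gives $\pr(Y > s) \le A\,s^{-\alpha_0/2}$ for all $s \ge 1$, all $x \in \Xx$, and all $n$ large.

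\emph{Lower tail.} This is the main obstacle, because the support of $Y$ shrinks toward $0$ as $n \to \infty$ (indeed $Y \ge 1/a_{r_n} \downarrow 0$), so the uniformity must be squeezed out of the regular variation of $F_0$. From $\log z \le z - 1$ we get $\pr(Y < \eps) = F_x^{r_n}(a_{r_n}\eps) \le \exp\{-r_n\,(1 - F_x(a_{r_n}\eps))\}$. Now \eqref{eq:FxF0} also gives a lower bound: since $c(x) \ge \underline c := \underline\sigma^{\alpha_0} > 0$, for $u$ large $\log F_x(u)/\log F_0(u) \ge \underline c/2$ uniformly in $x$, which with $1 - z^a \ge \min(1,a)(1-z)$ gives $1 - F_x(u) \ge c_2\,(1 - F_0(u))$ uniformly in $x$. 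A lower Potter bound for $1 - F_0$ then yields $r_n(1 - F_0(a_{r_n}\eps)) \ge \kappa\,\eps^{-\alpha_0/2}$ for $\eps \in (u_1/a_{r_n},\,1]$ and $n$ large, where $u_1$ is a fixed large threshold. For the remaining range $\eps \le u_1/a_{r_n}$, where $Y$ sits near its lower endpoint, bound $\pr(Y < \eps) \le F_x^{r_n}(u_1) = F_x(u_1)^{r_n}$ with $F_x(u_1) \le F_0(u_1)^{\underline c/2} < 1$ bounded away from $1$ uniformly in $x$; since $1 - F_0$ is regularly varying one has $a_{r_n}^{\alpha_0/2} = \oh(r_n)$, so $F_x(u_1)^{r_n} \le \exp(-\kappa\,\eps^{-\alpha_0/2})$ holds here as well once $n$ is large enough. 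All told, there are $\kappa_\star > 0$ and $n_0 \in \nat$ with $\pr(Y < \eps) \le \exp(-\kappa_\star\,\eps^{-\alpha_0/2})$ for all $x \in \Xx$, all $n \ge n_0$, and all $\eps \in (0,1]$.

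\emph{Conclusion.} Plugging the two tail bounds into the pointwise estimate of the first paragraph, one checks that for $t$ large and all $x \in \Xx$, $n \ge n_0$,
\[
    \cbr{\abr{m_{\eta_0}(x,Y)} > t} \subset \cbr{Y > e^{c_3 t}} \cup \cbr{Y < c_4\,t^{-1/\alpha_0}}
\]
for suitable constants $c_3,c_4 > 0$, whence $\pr(\abr{m_{\eta_0}(x,Y)} > t) \le A\,e^{-(c_3\alpha_0/2)t} + \exp(-\kappa_\star c_4^{-\alpha_0/2}\sqrt{t})$. Integrating $t^2$ against this tail, and using the trivial bound $1$ below the threshold, gives $\sup_{x \in \Xx,\ n \ge n_0}\expec[\abr{m_{\eta_0}(x,Y)}^{3}] < \infty$ uniformly, and Markov's inequality then delivers the claimed $L^2$-stochastic dominance.
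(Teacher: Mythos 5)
Your proof is correct, and while it leans on the same regular-variation/Potter-bound technology as the paper, the organization is genuinely different and worth noting. You prove the stronger uniform third-moment bound $\sup_{x,n}\expec[\,|m_{\eta_0}(x,Y)|^3\,]<\infty$ and invoke Markov's inequality, reducing the problem to uniform bounds on the \emph{upper} and \emph{lower} tails of $Y$ itself under $Q_n(x,\cdot)$. The paper instead decomposes $m_{\eta_0}$ additively into the three covariate-rescaled pieces $\log_+(y/\sigma_{\beta_0}(x))$, $\log_-(y/\sigma_{\beta_0}(x))$ and $(y/\sigma_{\beta_0}(x))^{-\alpha_0}$, and constructs an explicit square-integrable dominating survival function for each, with a three-regime analysis (in the variable $t$, with thresholds $b_r^\pm$) for the hardest piece and a reduction of the $\log_-$ piece to the power piece. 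In effect both proofs are doing the same work — union bound plus upper Potter bound on the upper tail of $Y$, and a careful treatment of the boundary layer near the shrinking lower endpoint $1/a_{r_n}$ using lower Potter bounds together with a crude exponential bound in the regime where $a_{r_n}\eps$ is below the Potter threshold. What your route buys is conceptual economy: you only ever need the two tail estimates on $Y$, and the step from there to $L^2$-stochastic dominance is the textbook observation (already recorded by the authors right after \eqref{eq:def:stochdom}) that a uniform $(2+\delta)$-th moment bound suffices. What the paper's route buys is slightly more information — explicit dominating survival functions rather than just a moment bound — though that extra precision is not used elsewhere. One small remark on your write-up: the key point $a_{r_n}^{\alpha_0/2}=\oh(r_n)$, which you use to absorb the boundary layer $\eps\le u_1/a_{r_n}$, deserves a sentence of justification (e.g.\ that $a_r$ is regularly varying of index $1/\alpha_0$, so $a_r^{\alpha_0/2}$ is regularly varying of index $1/2<1$); as stated it reads as an unsupported claim.
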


\begin{proof}[Proof of \cref{lem:FrechetL2}] 
Note that
\[
m_{\eta_0}(x,y) = \log \alpha_0 - \log \sigma_{\beta_0}(x) - (\alpha_0+1) \log \Big( \frac{y}{\sigma_\beta(x)}\Big) - \Big( \frac{y}{\sigma_\beta(x)}\Big)^{-\alpha_0}.
\]
Since $L^2$-stochastic dominance is preserved under addition, it is sufficient find $n_0=n_0(j)$ such that
\[
D_j := \big\{ f_j(x,Y): Y \sim Q_n(x, \point), x \in \Xx, n \ge n_0\big\}, \qquad j \in \{1,2,3\},
\]
is $L^2$-stochastically dominated,
where
\[
f_1(x,y) = \log_+(y/\sigma_{\beta_0}(x)), 
\quad 
f_2(x,y) = \log_-(y/\sigma_{\beta_0}(x)),
\quad 
f_3(x,y) = (y/\sigma_{\beta_0}(x))^{-\alpha_0},
\]
and where $\log_+(x) = \max\{\log(x), 0\}$ and $\log_-(x) = \max\{-\log(x), 0\}$.
For that purpose, writing $r=r_n$ and recalling that the support of $Q_n$ is $[1/a_{r},\infty)$, we need to bound
\[
t \mapsto \sup_{n \ge n_0} \sup_{x \in \Xx} 
S_{nj}(x,t) \quad \text{with}\quad  S_{nj}(x,t) := Q_n \Big( x, \big\{ y \ge 1/a_{r} : f_j(x,y) > t \big\} \Big)  
\]
by a survival function associated with a random variable that is square-integrable.

We start by the hardest case, $j=3$. In view of the fact that $\sigma_+ = \sup_{x \in \Xx} \sigma_{\beta_0}(x) \in (0,\infty)$, we have
\begin{align*}
S_{n3}(x,t)
&=
Q_n \Big( x, \big\{ 1/a_r \le y < \sigma_{\beta_0}(x) t^{-1/\alpha_0} \big\} \Big)
\\ &\le
Q_n \Big( x, \big\{ 1/a_r \le y < \sigma_{+} t^{-1/\alpha_0} \big\} \Big) =: S_{n3}^+(x,t).
\end{align*}
For some constant $c>1$ that will be specified below, write $b_r^- = (\tilde a_r/c)^{\alpha_0}$ and $b_r^+ = \tilde a_r^{\alpha_0}$,  where $\tilde a_r = a_r\sigma_+$. We will distinguish three cases according to whether $(n,x,t)$ satisfies $t \in [1, b_r^-]$, $t \in (b_r^-, b_r^+]$ or $t>b_r^+$.

In the last case, $t>b_r^+$, we have $1 / a_r > \sigma_{+}t^{-1/\alpha_0}$, which immediately implies $S_{n3}^+(x,t)=0$.
For all $1 \le t \le b_r^+$ we have $1 / a_r \le \sigma_{+}t^{-1/\alpha_0}$, so that we may write
\begin{align} \label{eq:frechet-sn3-expansion-new}
S_{n3}^+(x,t)
&= \nonumber
F_x^{r}(\tilde a_r t^{-1/\alpha_0}) 
\\&= 
\exp\Big( 
    - \big\{ - r \log F_0(\tilde a_r) \big\} \cdot \frac{\log F_0(\tilde a_r t^{-1/\alpha_0})}{\log F_0(\tilde a_r)} \cdot
    \frac{\log F_x(\tilde a_rt^{-1/\alpha_0})}{\log F_0(\tilde a_r t^{-1/\alpha_0})} \Big).
\end{align}

We next consider the case $t \in [1, b_r^-]$. First, 
\cref{eq:F0DoA} implies $-r \log F_0(\tilde a_r) = (\sigma_+)^{-\alpha_0} + \oh(1)$, whence there exists $n_1\in \nat$ such that $-r \log F_0(\tilde a_r) \ge (\sigma_+)^{-\alpha_0}/2$ for all $n \ge n_1$. Next, in view of \cref{eq:FxF0} there exists $c$ (wlog larger than $1$) such that  
\[
\sup_{x \in \Xx}\Big| \frac{\log F_x(z)}{\log F_0(z)} - (\sigma_{\beta_0}(x))^{\alpha_0} \Big| \le (\sigma_-)^{\alpha_0}/2 \qquad \forall z \ge c,
\] 
where $\sigma_- = \inf_{x \in \Xx} \sigma_{\beta_0}(x) \in (0,\infty)$.
Since $t \le b_r^-$ is equivalent to $\tilde a_r t^{-1/\alpha_0} \ge c$, we obtain that
\[
\frac{\log F_x(\tilde a_rt^{-1/\alpha_0})}{\log F_0(\tilde a_r t^{-1/\alpha_0})}
\ge 
(\sigma_{\beta_0}(x))^{\alpha_0} - \Big|  \frac{\log F_x(\tilde a_rt^{-1/\alpha_0})}{\log F_0(\tilde a_r t^{-1/\alpha_0})} - (\sigma_{\beta_0}(x))^{\alpha_0} \Big| \ge (\sigma_-)^{\alpha_0}/2
\]
for all $x \in \Xx$ and all $(n,t)$ such that $t \in [1, b_r^-]$. Finally, fix $\delta \in (0, 1 \wedge \alpha_0)$. After possibly increasing the constant $c$, the Potter bounds on the regularly varying function $-\log F_0$ \citep[Theorem~1.5.6]{BGT87} imply that, for all $u,v\ge c$,
\begin{align} \label{eq:frechet-potter-uniform-new}
\frac{-\log F_0(u)}{- \log F_0(v) } \le (1+\delta) \Big( \frac{u}{v}\Big)^{-\alpha_0} \max \Big[ \Big(\frac{u}{v} \Big)^{\delta}, \Big(\frac{u}{v} \Big)^{-\delta} \Big].
\end{align}
Since $a_r \to \infty$, we may choose $n_2$ such that $\tilde a_r \ge c$ for all $n \ge n_2$. We may apply the previous inequality with $u=\tilde a_r$ and $v= \tilde a_r y$ for $y \in [c/ \tilde a_r,1]$ to deduce that
\[
\frac{-\log F_0(\tilde a_r)}{- \log F_0(\tilde a_r y) } \le (1+\delta) y^{\alpha_0-\delta}
\qquad \forall n \ge n_2, y \in [c/\tilde a_r,1].
\]
This in turn implies
\[
\frac{-\log F_0(\tilde a_r y)}{- \log F_0(\tilde a_r) } \ge \frac1{1+\delta} y^{-\alpha_0+\delta}
\qquad \forall  n \ge n_2, y \in [c/\tilde a_r,1],
\]
and after setting $y=t^{-1/\alpha_0}$ as required in \eqref{eq:frechet-sn3-expansion-new} and recalling $b_r^- = (\tilde a_r /c)^{\alpha_0}$, we obtain that
\[
\frac{-\log F_0(\tilde a_r t^{-1/\alpha_0})}{- \log F_0(\tilde a_r) } \ge \frac1{1+\delta} t^{1- \delta/\alpha_0}
\qquad \forall n \ge n_2, t \in [1,b_r^-].
\]
Overall, defining $n_0 = \max(n_1, n_2)$, we have shown that
\[
S_{n3}^+(x,t) \le \exp\big( - k_\delta t^{1- \delta/\alpha_0} \big) \qquad  \forall x \in \Xx, n \ge n_0, t \in  [1,b_r^-],
\]
where $k_\delta=(\sigma_-/\sigma_+)^{\alpha_0}/\{4(1+\delta)\}$.

It remains to consider the case $t \in (b_r^-, b_r^+]$. Since $t \mapsto S_{n3}^+(x,t)$ is decreasing, the previous display implies that
\begin{align*}
S_{n3}^+(x,t) 
\le 
S_{n3}^+(x, b_r^-) 
&\le 
\exp\Big( - k_\delta (b_r^-)^{1- \delta/\alpha_0} \Big)
\\&=
\exp\Big( - k_\delta\Big(\frac{b_r^-}t\Big)^{1- \delta/\alpha_0} t^{1- \delta/\alpha_0}  \Big)
\\&\le 
\exp\Big( -k_\delta \Big(\frac{b_r^-}{b_r^+}\Big)^{1- \delta/\alpha_0} t^{1- \delta/\alpha_0}  \Big)
\\&=
\exp\Big( -k_\delta\Big(\frac{1}{c}\Big)^{\alpha_0- \delta} t^{1- \delta/\alpha_0}  \Big) 
\qquad \forall x \in \Xx,  n \ge n_0, t \in  (b_r^-, b_r^+].
\end{align*}
This bound is also an upper bound for the bounds that have been derived for the cases $t \in [1, b_r^-]$ and  $t>b_r^+$, whence we have overall shown that
\begin{align*}
S_{n3}^+(x,t)
\le
\exp\Big( - k_\delta \Big(\frac{1}{c}\Big)^{\alpha_0- \delta} t^{1- \delta/\alpha_0}  \Big) 
\qquad \forall x \in \Xx,  n \ge n_0, t \ge 1.
\end{align*}
The right-hand side defines a survival function associated with a random variable that is easily square integrable (recall that $\delta<\alpha_0$).

Next, consider the case $j=2$. Since $\log_-(y) \le y^{-1}$ for all $y>0$, we have $S_{n2}(x,t) \le S_{n3}(x,t^{\alpha_0})$. We may hence reuse the bound obtained for $S_{n3}$.

Finally, consider the case $j=1$. Observing that $\log_+(y)>t$ is equivalent to $y>e^t$ (for $t\ge 0$), we have
\begin{align*}
S_{n1}(x,t) &=  
Q_n  \Big( x, \big\{ y \ge 1/a_r: {y} > \sigma_{\beta_0}(x)e^t \big\} \Big)
\le
Q_n  \Big( x, \big\{ y : {y} > \sigma_{\beta_0}(x)e^t \big\} \Big).
\end{align*}
By definition of $Q_n$, we may rewrite the right-hand side of the previous display as
\[
\pr\rbr{ \frac{\max(\xi_1^x, \dots, \xi_{r}^x) \vee 1}{a_r} > \sigma_{\beta_0}(x)e^t},
\]
where $\xi_1^x, \xi_2^x, \dots$ are iid random variables with cdf $F_x$.
Choose $n_3$ sufficiently large such that $1/a_r \le \sigma_-$ for all $n \ge n_3$. Then $1/a_r \le  \sigma_{\beta_0}(x)e^t$ for all $t \ge 0, x \in \Xx$ and $n\ge n_3$, whence the previous probability is bounded by
\begin{align*}
\pr\rbr{ \frac{\max(\xi_1^x, \dots, \xi_{r}^x)}{a_r} > \sigma_{\beta_0}(x)e^t }
\le 
r \pr\big( \xi_1^x > a_r \sigma_{\beta_0}(x)e^t \big)
&\le 
r \pr\big( \xi_1^x > a_r \sigma_{-}e^t \big)
\\&=
r \{ 1- F_x(a_r \sigma_{-}e^t )\}
\\&\le 
- r  \log F_x(a_r \sigma_{-}e^t ),
\end{align*}
where the last inequality follows from $1-z \le - \log z$ for all $z \in [0,1]$. Next, rewrite the right-hand side of the previous display as
\[
- r  \log F_0(a_r \sigma_{-}) \cdot \frac{\log F_0(a_r \sigma_{-}e^t )}{\log F_0(a_r \sigma_{-})} \cdot \frac{\log F_x(a_r \sigma_{-}e^t )}{\log F_0(a_r \sigma_{-}e^t )}.
\]
\cref{eq:F0DoA} implies $-r \log F_0(a_r \sigma_{-} ) = (\sigma_-)^{-\alpha_0} + \oh(1)$, whence there exists $n_3\in \nat$ such that $0 \le -r \log F_0(a_r \sigma_{-}) \le 2(\sigma_-)^{-\alpha_0}$ for all $n \ge n_3$. Next, \cref{eq:frechet-potter-uniform-new} applied with $u=a_r\sigma_-e^t$ and $v = a_r\sigma_-$ yields
\[
\frac{\log F_0(a_r \sigma_{-}e^t )}{\log F_0(a_r \sigma_{-})}
\le 
(1+\delta) (e^t)^{-\alpha_0+\delta}=(1+\delta) e^{-t(\alpha_0-\delta)}
\]
for all $n\ge n_4$ and $t\ge 0$; here, $n_4$ is chosen such that $a_r\sigma_- \ge c$ for all $n \ge n_4$.
Finally, by \cref{eq:FxF0}, there exists $n_5\in\nat$ such that for all $n \ge n_5$, all $x \in \Xx$  and all $t \ge 0$,
\[
0 \le \frac{\log F_x(a_r \sigma_{-}e^t )}{\log F_0(a_r \sigma_{-}e^t )} \le 2(\sigma_+)^{\alpha_0}.
\]
Overall, we have shown that
\[
S_{n1}(x,t) \le  4(1+\delta) (\sigma_+/\sigma_-)^{\alpha_0} e^{-t(\alpha_0-\delta)} \qquad \forall x \in \Xx, n \ge n_3 \vee n_4 \vee n_5, t \ge 0,
\]
and since $\delta<\alpha_0$, the proof is finished.
\end{proof}

\subsection{Proofs for Section~\ref{sec:argmax}}
\label{sec:argmax-proofs}

\begin{proof}[Proof of \cref{thm:lachout-modified}] 
Fix $\delta_0>0$.  By continuity of measure, it is sufficient to show that, with probability one, `$d_H(\hat \eta_n, \eta_0) < \delta_0$ for all sufficiently large $n$'. Equivalently, defining $\Delta := \{ \eta \in H \colon d_H(\eta, \eta_0) \geq \delta_0 \}$, we need to show that `$\hat \eta_n \notin \Delta$ for all sufficiently large $n$' with probability one.

For that purpose, note that, for any $\eta \ne \eta_0$, we have
\[ 
    \lim_{\delta \downarrow 0}  P m_{B(\eta, \delta)}    =  M(\eta) < M(\eta_0) < \infty
\]
by \cref{lem:ext_ballConv-abstract} and condition~\ref{L-identifiable}, where \cref{lem:ext_ballConv-abstract} is applicable by \ref{L-semicontinuous} and the first part of \ref{L-bounded-integrals-convergence-suprema}. Hence, after possibly decreasing $\delta = \delta(\eta) > 0$ from \ref{L-bounded-integrals-convergence-suprema}, we can assume that
\begin{equation}
\label{eq:proofCons1a-abstract}
    P m_{\clB(\eta,\delta)} < M(\eta_0).
\end{equation}
We may cover the set $\Delta$ by the collection of open balls $\{B(\eta, \delta(\eta)): \eta \in \Delta\}$. Since $\Delta$ is compact by compactness of $H$, we can choose a finite subcover, say $\Delta \subset \bigcup_{i=1}^K B_i$ where $B_i = B(\eta_i, \delta(\eta_i))$.

Now, for each $i=1, \dots, K$,
\begin{align*}
\limsup_{n\to\infty} \Pemp m_{B_i} 
\le
\limsup_{n\to\infty} \Pemp m_{\closure(B_i)} 
&\le 
P m_{\closure(B_i)}
< M(\eta_0)
\end{align*}
with probability one, where we used condition~\ref{L-bounded-integrals-convergence-suprema} and \cref{eq:proofCons1a-abstract} at the second and third inequality, respectively. As a consequence, 
\[
\limsup_{n\to\infty} \sup_{\eta \in \Delta} \Pemp m_\eta
\le 
\limsup_{n\to\infty} \max_{i=1, \dots, K} \Pemp m_{B_i} 
< M(\eta_0)
\]
with probability one. 

Also, by condition~\ref{L-convergence-eta0} and since $M_n(\hetan) \geq M_n(\eta_0) - \oh_{a.s.}(1)$, we further have
\[
M(\eta_0) \le \liminf_{n \to \infty} M_n(\eta_0) \le \liminf_{n\to\infty} M_n(\hat \eta_n)   = \liminf_{n \to \infty} \Pemp m_{\hat \eta_n}
\]
with probability one. Let $\Omega_0$ denote the intersection of the two previous probability-one events, and define $N=\{\hetan \in \Delta$ infinitely often$\}$. The event $N \cap \Omega_0$ is then empty: indeed, on $N \cap \Omega_0$ we would have
\[
    \liminf_{n\to\infty} \Pemp m_{\hat{\eta}_n}
    \le \limsup_{n\to\infty} \sup_{\eta\in\Delta} \Pemp m_\eta
    < M(\eta_0)
    \le \liminf_{n\to\infty} \Pemp m_{\hat{\eta}_n};
\]
a contradiction. Hence, $\pr(N) = \pr(N \cap \Omega_0^c) \le \pr(\Omega_0^c)=0$, which was to be shown.
\end{proof}

\begin{proof}[Proof of \cref{thm:lachout-modified:weak}]
The first two paragraphs of the proof are the same as those of the proof of Theorem~\ref{thm:lachout-modified}.
On the one hand, we have
\begin{equation} 
    \label{eq:lachout-modified:weak:proof1}
    M_n(\hetan) 
    \ge M_n(\eta_0) - \oh_{\pr}(1) 
    \ge M(\eta_0) - \oh_{\pr}(1),
    \qquad n \to \infty,
\end{equation}
by the assumption on the estimator and by Assumption~\ref{L-convergence-eta0-P}. On the other hand, with $\delta(\eta_i)$ and $B_i$ for $i = 1,\ldots,K$ defined as in the proof of \cref{thm:lachout-modified}, we have
\begin{align*}
    \Pemp m_{B_i} 
    &\le \Pemp m_{\closure(B_i)} 
    \le P m_{\closure(B_i)} + \oh_{\pr}(1) 
    = P m_{\clB(\eta_i,\delta(\eta_i))} + \oh_{\pr}(1),
    \qquad n \to \infty,
\end{align*}
by Assumption~\ref{L-convergence-suprema-P} and thus 
\begin{align}
    \nonumber
    \sup_{\eta \in \Delta} M_n(\eta)
    &\le \max_{i = 1,\ldots,K} \Pemp m_{B_i} \\
    \label{eq:lachout-modified:weak:proof2}
    &\le \max_{i = 1,\ldots,K} P m_{\clB(\eta_i,\delta(\eta_i))} + \oh_{\pr}(1) 
    \le M(\eta_0) - \eps + \oh_{\pr}(1) 
\end{align}
for some $\eps > 0$, since $P m_{B(\eta_i,\delta(\eta_i))} < M(\eta_0)$ for all $i=1,\ldots,K$. Together, \cref{eq:lachout-modified:weak:proof1,eq:lachout-modified:weak:proof2} imply that the event $\{\hetan \in \Delta\}$ is included in the event $\{M(\eta_0) - \oh_{\pr}(1) \le M(\eta_0)-\eps+\oh_{\pr}(1)\}$, and the probability of that event goes to zero as $n \to \infty$. Hence,
$\pr(\hetan \in \Delta) \to 0$, as required.
\end{proof}

\begin{lemma}[Convergence of suprema over shrinking balls]
\label{lem:ext_ballConv-abstract}
Suppose that Assumption \ref{L-semicontinuous} is met and let $\eta \in H$ be arbitrary. If $P m_{\clB(\eta,\delta)} < \infty$ for some $\delta>0$, then
\[
	\lim_{\dlt \downarrow 0}
	P m_{B(\eta,\dlt)}
	=
	P m_\eta.
\]
\end{lemma}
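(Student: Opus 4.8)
The plan is to split the claim into a trivial lower bound, valid for every $\delta>0$, and an upper bound obtained by combining the $P$-almost everywhere monotone convergence $m_{B(\eta,\delta)}\downarrow m_\eta$ (granted by \ref{L-semicontinuous}) with the reverse Fatou lemma. Throughout I fix $\delta_0>0$ with $P m_{\clB(\eta,\delta_0)}<\infty$ as in the hypothesis; since $m$ takes values in $[-\infty,\infty)$ and $m_{\clB(\eta,\delta_0)}$ is $P$-quasi-integrable, this forces $P(m_{\clB(\eta,\delta_0)})^{+}<\infty$. For every $0<\delta\le\delta_0$ one has, pointwise,
\[
    m_\eta \;\le\; m_{B(\eta,\delta)} \;\le\; m_{\clB(\eta,\delta)} \;\le\; m_{\clB(\eta,\delta_0)},
\]
because $\eta\in B(\eta,\delta)$ and $B(\eta,\delta)\subset\clB(\eta,\delta)\subset\clB(\eta,\delta_0)$. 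Hence $(m_{B(\eta,\delta)})^{+}\le (m_{\clB(\eta,\delta_0)})^{+}$, so $P(m_{B(\eta,\delta)})^{+}<\infty$ and $P m_{B(\eta,\delta)}\in[-\infty,\infty)$ is well defined; likewise $P m_\eta\in[-\infty,\infty)$. From the first inequality above we get $P m_\eta\le P m_{B(\eta,\delta)}$ for all $0<\delta\le\delta_0$, and since $\delta\mapsto P m_{B(\eta,\delta)}$ is nondecreasing, $\lim_{\delta\downarrow0}P m_{B(\eta,\delta)}=\inf_{0<\delta\le\delta_0}P m_{B(\eta,\delta)}$ exists and is at least $P m_\eta$.

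Next I establish the pointwise convergence. Let $z\in\Ee\setminus V_\eta$, so that $\bar\eta\mapsto m_{\bar\eta}(z)$ is upper semicontinuous at $\eta$. The map $\delta\mapsto m_{B(\eta,\delta)}(z)$ is nondecreasing and bounded below by $m_\eta(z)$, so $\ell(z):=\lim_{\delta\downarrow0}m_{B(\eta,\delta)}(z)=\inf_{\delta>0}\sup_{\bar\eta\in B(\eta,\delta)}m_{\bar\eta}(z)$ exists in $[-\infty,\infty)$ and satisfies $\ell(z)\ge m_\eta(z)$. Conversely, for any real $t>m_\eta(z)$, upper semicontinuity at $\eta$ provides a $\delta>0$ with $m_{\bar\eta}(z)<t$ for all $\bar\eta\in B(\eta,\delta)$, whence $\ell(z)\le m_{B(\eta,\delta)}(z)\le t$; taking the infimum over such $t$ gives $\ell(z)\le\inf\{t\in\reals:t>m_\eta(z)\}=m_\eta(z)$. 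Thus $m_{B(\eta,\delta)}(z)\downarrow m_\eta(z)$ as $\delta\downarrow0$ for every $z\notin V_\eta$, i.e.\ for $P$-almost every $z$.

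Finally, for the upper bound pick any sequence $\delta_k\downarrow0$ with $\delta_1\le\delta_0$. The functions $m_{B(\eta,\delta_k)}$ are all dominated from above by the fixed function $m_{\clB(\eta,\delta_0)}$, which satisfies $P m_{\clB(\eta,\delta_0)}<\infty$, so the reverse Fatou lemma together with the preceding step yields
\[
    \limsup_{k\to\infty}P m_{B(\eta,\delta_k)}
    \;\le\; P\Bigl(\limsup_{k\to\infty}m_{B(\eta,\delta_k)}\Bigr)
    \;=\; P m_\eta .
\]
Combined with the lower bound this shows $\lim_{k\to\infty}P m_{B(\eta,\delta_k)}=P m_\eta$; since $\delta\mapsto P m_{B(\eta,\delta)}$ is monotone, its limit as $\delta\downarrow0$ equals the limit along any such sequence, and the lemma follows. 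The only delicate point is the bookkeeping caused by allowing $m_\eta(z)=-\infty$: one must check that every integral in sight is a well-defined element of $[-\infty,\infty)$ — which the one-sided domination by the quasi-integrable $m_{\clB(\eta,\delta_0)}$ guarantees — and that the reverse Fatou lemma is indeed available under domination by a function whose integral is merely known to be $<\infty$ rather than finite; measurability of the suprema $m_{B(\eta,\delta)}$ is taken as given here, in line with its use in the statement and in \ref{L-bounded-integrals-convergence-suprema}.
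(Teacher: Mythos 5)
Your proof is correct and follows essentially the same route as the paper's: establish the pointwise monotone convergence $m_{B(\eta,\delta)}(z)\downarrow m_\eta(z)$ for $z\notin V_\eta$ via upper semicontinuity, then interchange limit and integral under the one-sided domination supplied by $P m_{\clB(\eta,\delta_0)}<\infty$. The only cosmetic difference is that you invoke the reverse Fatou lemma directly, whereas the paper applies the monotone convergence theorem to the nonnegative nondecreasing sequence $(m_{B(\eta,\delta_{n_0})}-m_{B(\eta,\delta_n)})\1_{\{|m_{B(\eta,\delta_{n_0})}|<\infty\}}$ after first discarding the trivial case $P m_{B(\eta,\delta_{n_0})}=-\infty$; these are two presentations of the same bookkeeping, and your remark that reverse Fatou remains valid when the dominating function merely has integrable positive part is the point the paper handles via that explicit case split and indicator.
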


\begin{proof} 
Fix $\eta \in H$, and let $\delta_n \downarrow 0$ as $n \to \infty$. For $z\in \Ee$, define $f_n(z) = m_{B(\eta,\delta_n)}(z)$, and note that there exists $n_0$ such that $P f_n \le P f_{n_0} < \infty$ for all $n \ge n_0$ by monotonicity and the assumption that $P m_{\clB(\eta,\delta)} < \infty$ for some $\delta>0$.
If $P f_{n_0} = -\infty$, there is nothing to show, so assume $P f_{n_0} > -\infty$. By the monotone convergence theorem
applied to the non-decreasing sequence $(f_{n_0}-f_n) \1_{\{|f_{n_0}|<+\infty\}}$,
\[
\lim_{n\to \infty} P f_n = P \Big( \lim_{n \to \infty} f_n \Big) =  \int_\Ee \lim_{n \to \infty} \sup_{\eta' \in B(\eta, \delta_n)} m_{\eta'}(z) \, P(\diff z).
\]
By Assumption~\ref{L-semicontinuous}, the integration domain on the right-hand side can be replaced by $\Ee \setminus V_\eta$, and on that domain, we have 
\[
\lim_{n \to \infty} \sup_{\eta' \in B(\eta, \delta_n)} m_{\eta'}(z)  = m_\eta(z) 
\]
for all $z \in \Ee \setminus V_\eta$ by upper semicontinuity. This implies the assertion.
\end{proof}

\bibliographystyle{chicago}
\bibliography{biblio}
\end{document}